\providecommand{\U}[1]{\protect\rule{.1in}{.1in}}
\newtheorem{theorem}{Theorem}
\theoremstyle{plain}
\newtheorem{acknowledgement}[theorem]{Acknowledgement}
\newtheorem{conjecture}[theorem]{Conjecture}
\newtheorem{definition}[theorem]{Definition}
\newtheorem{lemma}[theorem]{Lemma}
\newtheorem{proposition}[theorem]{Proposition}
\numberwithin{equation}{section}
\begin{document}
\title{A reprise of the NTV conjecture for the Hilbert transform}
\author{Eric T. Sawyer}
\address{McMaster University}
\email{sawyer@mcmaster.ca}

\begin{abstract}
We give a slightly different proof of the NTV conjecture for the Hilbert
transform that was proved by T. Hyt\"{o}nen, M. Lacey, E.T. Sawyer, C.-Y. Shen
and I. Uriartre-Tuero in \cite{LaSaShUr3}, \cite{Lac} and \cite{Hyt2},
building on previous work of F. Nazarov, S. Treil and A. Volberg in
\cite{NTV3}.

After modifying the decomposition of the main bilinear form, we give a new
proof of control of functional energy that is based on the potential Theorem 1
of \cite{Saw3}, rather than the Poisson Theorem 2 that is used in all other
proofs in the literature. This approach was pioneered in the first version of
Sawyer and Wick \cite{SaWi} on the ArXiv.  Then we alter the bottom-up corona
construction, the size functional, the straddling lemmas, and the use of
recursion of admissible collections of pairs of intervals, from M. Lacey
\cite{Lac}. However, the essence of control of the stopping form remains as in
the fundamental work of Lacey.

\end{abstract}
\maketitle
\tableofcontents

\section{Introduction}

F. Nazarov, S. Treil and A. Volberg formulated the two weight question for the
Hilbert transform \cite{Vol}, that in turn led to the famous NTV conjecture:

\begin{conjecture}
\cite{Vol}Given two positive locally finite Borel measures $\sigma$ and
$\omega$ on the real line $\mathbb{R}$, the Hilbert transform $Hf\left(
x\right)  =\operatorname{pv}\int_{\mathbb{R}}\frac{f\left(  y\right)  }%
{y-x}dy$ is bounded from $L^{2}\left(  \sigma\right)  $ to $L^{2}\left(
\omega\right)  $, i.e. the operator norm%
\begin{equation}
\mathfrak{N}_{H}\left(  \sigma,\omega\right)  \equiv\sup_{f\in L^{2}\left(
\mathbb{R};\sigma\right)  }\frac{1}{\left\Vert f\right\Vert _{L^{2}\left(
\sigma\right)  }}\left\Vert H\left(  f\sigma\right)  \right\Vert
_{L^{2}\left(  \omega\right)  }\ ,\label{Hilbert'}%
\end{equation}
is finite uniformly over appropriate truncations of $H$, if the two tailed
Muckenhoupt characteristic,%
\[
\mathcal{A}_{2}\left(  \sigma,\omega\right)  \equiv\sup_{I\in\mathcal{I}%
}\left(  \frac{1}{\left\vert I\right\vert }\int_{I}\mathbf{s}_{I}\left(
x\right)  ^{2}d\omega\left(  x\right)  \right)  \left(  \frac{1}{\left\vert
I\right\vert }\int_{I}\mathbf{s}_{I}\left(  y\right)  ^{2}d\sigma\left(
y\right)  \right)  \ ,
\]
is finite, where $\mathbf{s}_{I}\left(  x\right)  =\frac{\ell\left(  I\right)
}{\ell\left(  I\right)  +\left\vert x-c_{I}\right\vert }$, and if the testing
characteristic%
\begin{equation}
\mathfrak{T}_{H}\left(  \sigma,\omega\right)  \equiv\sup_{I\in\mathcal{I}%
}\frac{1}{\sqrt{\left\vert I\right\vert _{\sigma}}}\left\Vert H\mathbf{1}%
_{I}\sigma\right\Vert _{L^{2}\left(  \omega\right)  }\ ,\label{two testing}%
\end{equation}
is finite as well as its dual $\mathfrak{T}_{H}\left(  \omega,\sigma\right)  $.
\end{conjecture}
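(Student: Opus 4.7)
The plan is to follow the roadmap of Nazarov--Treil--Volberg and Lacey--Sawyer--Shen--Uriarte-Tuero, with the modifications described in the abstract. First I would introduce a pair of random dyadic grids $\mathcal{D}^{\sigma},\mathcal{D}^{\omega}$ on $\mathbb{R}$ and expand $f\in L^{2}(\sigma)$, $g\in L^{2}(\omega)$ in their respective Haar bases. The bilinear form $\langle H(f\sigma),g\rangle_{\omega}$ then decomposes as a double sum over pairs $(I,J)$. By the NTV good/bad trick, this can be restricted, up to a small probability of error absorbed into the left-hand side, to pairs where both intervals are \emph{good} relative to the other grid.

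Next I would split the good-good sum according to the relative position of $I$ and $J$: (i) disjoint pairs, estimated by $\mathcal{A}_{2}$ together with Poisson tail bounds arising from the monotonicity of the Hilbert kernel; (ii) nested pairs $J\subsetneq I$ broken into a paraproduct piece (absorbed by $\mathfrak{T}_{H}(\sigma,\omega)$), a neighbor piece (controlled by $\mathcal{A}_{2}$), and a stopping piece; and symmetrically for $I\subsetneq J$. The first two of these follow essentially as in Nazarov--Treil--Volberg and will not be the source of difficulty.

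The technical core is control of the stopping form. Here I would build a corona decomposition with stopping intervals chosen to control $L^{2}(\sigma)$ averages of $f$, and within each corona run the Lacey-type bottom-up recursion on admissible collections of pairs of intervals, using the modified size functional and straddling lemma advertised in the abstract to ensure that the recursion terminates with the correct quantitative loss. This reduces the stopping form estimate to a functional energy condition, which for the Hilbert transform is deduced from $\mathcal{A}_{2}$ and both testing constants $\mathfrak{T}_{H}(\sigma,\omega)$, $\mathfrak{T}_{H}(\omega,\sigma)$ via an energy-reversal argument specific to $H$; the Sawyer--Wick approach then supplies a cleaner route to the functional energy bound than the ones used in \cite{LaSaShUr3} and \cite{Hyt2}.

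The main obstacle is, unsurprisingly, the stopping form. The size functional must be designed so that the bottom-up recursion still terminates with only logarithmic loss, and the new straddling lemma must dovetail with this modified size; this is the step in which the essence of Lacey's construction must be preserved while the alternative organization takes effect. Once these ingredients are in place, assembly of the remaining forms yields the desired estimate $\mathfrak{N}_{H}(\sigma,\omega)\lesssim\sqrt{\mathcal{A}_{2}(\sigma,\omega)}+\mathfrak{T}_{H}(\sigma,\omega)+\mathfrak{T}_{H}(\omega,\sigma)$, resolving the conjecture.
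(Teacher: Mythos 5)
Your outline follows the broad NTV/Lacey roadmap, but it misidentifies where the two hardest ingredients enter, and it proposes precisely the machinery the paper is at pains to avoid.

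First, a misattribution: you say that control of the stopping form ``reduces to a functional energy condition,'' with the Sawyer--Wick route then supplying the functional energy bound. In the paper these roles belong to \emph{different} forms. Functional energy (the characteristic $\mathfrak{F}(\sigma,\omega)$ in (\ref{func ener})) is used to prove the Intertwining Proposition \ref{strongly adapted'}, which controls the \emph{far} form $\mathsf{B}_{\func{far}}$; it plays no role in the stopping form. The stopping form is instead controlled by the Poisson--Energy characteristic $\mathrm{P}\mathsf{E}_{\mathcal{F}}(\sigma,\omega)$ from (\ref{PE char}), which is bounded by the parameter $\Gamma$ chosen in the corona construction (\ref{energy stop crit}), together with Lacey's size functional and a nonlinear recursion. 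The energy condition (\ref{energy control}) is used yet differently: it yields the $\sigma$-Carleson property (\ref{sparse}) and the quasi-orthogonality (\ref{qorth}), which are needed for the far, paraproduct and stopping forms alike, but it is not the functional energy inequality.

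Second, you propose to ``run the Lacey-type bottom-up recursion on admissible collections of pairs of intervals.'' The paper explicitly avoids this recursion (see the introduction and the statement of the Stopping Child Lemma \ref{straddle 3}). What replaces it is a bottom-up dual tree decomposition (Lemma \ref{lem I*}) producing stopping intervals $\mathcal{U}[F]$, a reformulated straddling estimate (the Stopping Child Lemma), and a \emph{nonlinear fixed-point} argument on the quantity $\Theta_{\digamma}^{F}$: the far-stopping piece and the top-only piece contribute $O(1)$, while the no-top piece contributes $\theta\Theta_{\digamma}^{F}$ with $\theta<1$ via the tightness estimate (\ref{tight}), so $\Theta_{\digamma}^{F}\leq 2C/(1-\theta)$. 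There is no logarithmic loss; the recursion closes with a bounded constant, which is what makes the argument work. Relatedly, the paper uses a single $(r,\varepsilon)$-good grid rather than two random grids, and the energy inequality (\ref{energy control}) needs only $\mathfrak{T}_{H}(\sigma,\omega)$ and the offset Muckenhoupt constant, not both testing constants. These last two are minor, but the first two points above are substantive gaps in your outline.
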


In a groundbreaking series of papers including \cite{NTV1},\cite{NTV2} and
\cite{NTV3}, Nazarov, Treil and Volberg used weighted Haar decompositions with
random grids of $\left(  r,\varepsilon\right)  -\operatorname{good}$ grids,
introduced their `pivotal condition', and proved the above conjecture under
the side assumption that this pivotal condition held. Subsequently in
\cite{LaSaUr2}, it was shown that the pivotal condition was not necessary for
the norm inequality to hold in general, a necessary `energy condition' was
introduced as a substitute, and a hybrid merging of these two conditions was
shown to be sufficient for use as a side condition. Eventually, these three
authors with C.-Y. Shen established the NTV conjecture in the absence of
common point masses in the measures $\sigma$ and $\omega$ in a two part paper;
M. Lacey, E. T. Sawyer, C.-Y. Shen and I. Uriarte-Tuero \cite{LaSaShUr3} and
M. Lacey \cite{Lac}.

The final assumption of no common point masses was removed shortly after by T.
Hyt\"{o}nen \cite{Hyt2} who replaced the two tailed Muckenhoupt characteristic
with a variant involving holes,%
\begin{align*}
\mathcal{A}_{2}^{\operatorname{hole}}\left(  \sigma,\omega\right)   &
\equiv\sup_{I\in\mathcal{I}}\left\{  \mathrm{P}\left(  I,\mathbf{1}%
_{\mathbb{R}\setminus I}\omega\right)  \left(  \frac{\left\vert I\right\vert
_{\sigma}}{\left\vert I\right\vert }\right)  +\left(  \frac{\left\vert
I\right\vert _{\omega}}{\left\vert I\right\vert }\right)  \mathrm{P}\left(
I,\mathbf{1}_{\mathbb{R}\setminus I}\sigma\right)  \right\}  ,\\
\mathrm{P}\left(  I,\mu\right)   & \equiv\int_{\mathbb{R}}\frac{\ell\left(
I\right)  }{\left(  \ell\left(  I\right)  +\left\vert y-c_{I}\right\vert
\right)  ^{2}}d\mu\left(  y\right)  ,\ \ \ \ \ c_{I}\equiv
\operatorname*{centre}\text{ of }I,
\end{align*}
and who also simplified some aspects of the proof. The reason for introducing
the holes is that if $\sigma$ and $\omega$ share a common point mass, then the
classical Muckenhoupt characteristic%
\[
A_{2}\left(  \sigma,\omega\right)  \equiv\sup_{I\in\mathcal{I}}\frac
{\left\vert I\right\vert _{\sigma}}{\left\vert I\right\vert }\frac{\left\vert
I\right\vert _{\omega}}{\left\vert I\right\vert },
\]
fails to be finite - simply let $I$ in the $\sup$ above shrink to a common
point mass. The $\operatorname{holed}$ Muckenhoupt condition trivially implies
the more elementary $\operatorname*{offset}$ Muckenhoupt condition
\[
A_{2}^{\operatorname*{offset}}\left(  \sigma,\omega\right)  \equiv
\sup_{\left(  Q,Q^{\prime}\right)  \in\mathcal{N}}\left(  \frac{1}{\left\vert
Q^{\prime}\right\vert }\int_{Q^{\prime}}d\omega\right)  \left(  \frac
{1}{\left\vert Q\right\vert }\int_{Q}d\sigma\right)  ,
\]
where $\mathcal{N}$ is the set of neighbouring pairs $\left(  Q,Q^{\prime
}\right)  $ of intervals of comparable side length, i.e. $\left(  Q,Q^{\prime
}\right)  \in\mathcal{D}$ for some dyadic grid $\mathcal{D}$ where $2^{-r}%
\ell\left(  Q^{\prime}\right)  \leq\ell\left(  Q\right)  \leq2^{r}\ell\left(
Q^{\prime}\right)  $, $Q\cap Q^{\prime}=\emptyset$, and $\overline{Q}%
\cap\overline{Q^{\prime}}\not =\emptyset$. Here $r$ is the goodness parameter
in \cite{NTV3} and \cite{LaSaShUr3}.

The purpose of this paper is to give a modified proof of the above results for
the Hilbert transform, and to highlight where the various characteristics
enter into the proof. Our proof is somewhat simplified in that it avoids the
recursion of admissible collections of pairs of intervals that was used in
Part II \cite{Lac}, and the recent approach to controlling functional energy
due to Sawyer and Wick \cite{SaWi}, is discussed in the setting $p=2$.
However, apart from some reorganization using nonlinear bounds, the essence of
control of the stopping form remains as in the fundamental work of M. Lacey
\cite{Lac}, and the control of functional energy is similar in spirit to that
in \cite{LaSaShUr3}, and can in fact be imported as a black box from
\cite{LaSaShUr3} if one wishes.

\begin{theorem}
\label{main}Let $\sigma$ and $\omega$ be positive locally finite Borel
measures on $\mathbb{R}$. The Hilbert transform $H_{\sigma}$ is bounded from
$L^{2}\left(  \sigma\right)  $ to $L^{2}\left(  \omega\right)  $ \emph{if and
only if} the two tailed Muckenhoupt characteristic with holes, and both of the
testing characteristics, are finite. More precisely, there are universal
positive constants $c,C$ such that for all pairs $\left(  \sigma
,\omega\right)  $ of positive locally finite Borel measures,
\[
c\leq\frac{\sqrt{\mathcal{A}_{2}^{\operatorname{hole}}\left(  \sigma
,\omega\right)  }+\mathfrak{T}_{H}\left(  \sigma,\omega\right)  +\mathfrak{T}%
_{H}\left(  \omega,\sigma\right)  }{\mathfrak{N}_{H}\left(  \sigma
,\omega\right)  }\leq C.
\]

\end{theorem}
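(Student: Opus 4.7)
The necessity direction (the lower bound $c\leq\ldots$) is the easy half. The two testing quantities are dominated by $\mathfrak{N}_{H}(\sigma,\omega)$ and $\mathfrak{N}_{H}(\omega,\sigma)$ by definition, applied to $f=\mathbf{1}_{I}$ and its dual. The holed Muckenhoupt $\mathcal{A}_2^{\func{hole}}$ follows by combining the pointwise lower bound $|H(\mathbf{1}_{\mathbb{R}\setminus I}\sigma)(x)|\gtrsim \mathrm{P}(I,\mathbf{1}_{\mathbb{R}\setminus I}\sigma)$ for $x\in I$ (and its dual) with integration against $\omega$ on $I$, then splitting the kernel into its restriction to $I$ and its complement and invoking the norm inequality together with the testing characteristic to absorb the local piece. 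This produces both summands in the definition of $\mathcal{A}_2^{\func{hole}}$.

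For the sufficiency direction, the plan is the following chain of reductions. First, apply the Nazarov--Treil--Volberg random grid reduction to replace $\mathfrak{N}_H$ by the expectation, over two independent random dyadic grids $\mathcal{D}^{\sigma}$ and $\mathcal{D}^{\omega}$, of $\langle H_{\sigma}f,g\rangle_{\omega}$ restricted to $(r,\varepsilon)$-good test functions. Expand $f$ in the $\sigma$-weighted Haar basis on $\mathcal{D}^\sigma$ and $g$ in the $\omega$-weighted Haar basis on $\mathcal{D}^\omega$, producing
\[
\langle H_{\sigma}f,g\rangle_{\omega}=\sum_{I\in\mathcal{D}^{\sigma},\,J\in\mathcal{D}^{\omega}}\langle H_{\sigma}\Delta_{I}^{\sigma}f,\Delta_{J}^{\omega}g\rangle_{\omega},
\]
and split the sum by the relative position of $I$ and $J$: disjoint pairs, comparable-sized pairs, and deeply nested pairs $J\Subset_{r}I$, with the dual case $I\Supset_{r}J$ treated symmetrically.

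The disjoint and comparable-sized forms are handled directly by Poisson tail estimates and Haar orthogonality, giving bounds in terms of $A_2^{\limfunc{offset}}$ and $\mathcal{A}_2^{\func{hole}}$. For the deeply nested pairs I would apply an intertwining-type expansion of $\Delta_{I}^{\sigma}f$ about its children, decomposing the associated bilinear form into a \emph{paraproduct form}, a \emph{neighbour form}, and a \emph{stopping form}. The neighbour form is absorbed into $\mathcal{A}_2^{\func{hole}}$ by a short Haar-orthogonality argument, while the paraproduct is controlled by the testing characteristic $\mathfrak{T}_{H}(\sigma,\omega)$ via a Carleson embedding against a $\sigma$-corona tree of energy-stopping intervals. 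Both this Carleson embedding and the estimate of the stopping form rely on the auxiliary \emph{functional energy} inequality, which must be proved separately.

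The two genuinely hard pieces are functional energy and the stopping form, the latter being the main obstacle. For functional energy, I would follow the approach of \cite{SaWi}, reducing it directly to $\mathfrak{T}_{H}(\sigma,\omega)$ and $\mathcal{A}_{2}^{\func{hole}}(\sigma,\omega)$ without invoking the quasi-orthogonality framework of \cite{LaSaShUr3}; alternatively, the bound from \cite{LaSaShUr3} may be imported as a black box. For the stopping form, the plan is to retain the essence of Lacey's argument from \cite{Lac}: build a bottom-up corona decomposition of the stopping intervals governed by an appropriate size functional, and apply straddling lemmas that separate pairs according to whether the Poisson mass of $f$ concentrates on a single child of a stopping interval or spreads across several. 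This yields geometric decay across corona generations; however, with the reorganization advertised in the abstract, the recursion on admissible collections of pairs from \cite{Lac} is replaced by a direct nonlinear bound on the size functional, cleaning up the bookkeeping. Assembling the disjoint, comparable, neighbour, paraproduct and stopping estimates yields $\mathfrak{N}_{H}\leq C(\sqrt{\mathcal{A}_{2}^{\func{hole}}}+\mathfrak{T}_{H}(\sigma,\omega)+\mathfrak{T}_{H}(\omega,\sigma))$, which together with the necessity half gives the theorem.
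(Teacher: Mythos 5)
Your outline follows the same broad strategy as the paper — random-grid reduction, Haar expansion, split by relative position of $I$ and $J$, corona construction, and control of paraproduct/stopping/neighbour pieces, with the stopping form handled by a Lacey-style bottom-up size functional reorganized to replace the recursion on admissible pairs with a nonlinear bound. The necessity half is also treated correctly. However, there are two genuine gaps in the sufficiency half.

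First, your decomposition of the below form lists only three pieces (paraproduct, neighbour, stopping) and omits the \emph{far form}, which in the paper is the fourth piece: after the CZ--Poisson-Energy corona construction, the home form splits into a \emph{diagonal} part (which the NTV reach then splits into paraproduct and stopping) and a \emph{far} part, where the cubes $I$ and $J$ live in distinct coronas. The far form is precisely what the Intertwining Proposition and the functional energy characteristic $\mathfrak{F}(\sigma,\omega)$ are for. Your proposal instead claims that functional energy is what makes the paraproduct Carleson embedding and the stopping form estimate work; this misattributes the role of $\mathfrak{F}$. The paraproduct is controlled by local interval testing together with quasi-orthogonality of the stopping data $\alpha_{\mathcal{F}}(F)$, and the stopping form is controlled by the Poisson-Energy characteristic $\mathrm{P}\mathsf{E}_{\mathcal{F}}(\sigma,\omega)$ built into the corona construction, not by functional energy. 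If the far form is left out, the argument cannot close.

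Second, a point that makes the argument work specifically for the Hilbert transform: the CZ--PE corona needs to be $\sigma$-Carleson (hence sparse with geometric decay in generations) for the quasi-orthogonality and for the far and stopping estimates. This sparseness is \emph{not} automatic from the stopping criterion; it requires finiteness of the energy characteristic $\mathcal{E}_{2}(\sigma,\omega)$, which in turn is bounded by $\mathfrak{T}_{H}(\sigma,\omega)+A_{2}^{\limfunc{offset}}(\sigma,\omega)$ via the identity-specific inequality recalled in the paper. That inequality fails for most Calder\'on--Zygmund operators and is the reason the proof does not extend to, say, higher-dimensional Riesz transforms. Your proposal does not mention this step, and without it the corona construction has no a priori Carleson bound and the subsequent estimates do not go through.
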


\begin{acknowledgement}
We thank Brett Wick for fruitful discussions on both the far and stopping forms.
\end{acknowledgement}

\section{Reprise of the proof of the $NTV$ conjecture}

We assume the Haar supports of all functions $f\in L^{2}\left(  \sigma\right)
$ and $g\in L^{2}\left(  \omega\right)  $ are contained, along with their two
children, in the $\left(  r,\varepsilon\right)  $-$\operatorname{good}$ grid
$\mathcal{D}_{\operatorname{good}}$, where $J\in\mathcal{D}%
_{\operatorname{good}}$ if $\operatorname*{dist}\left(  J,\partial K\right)
\geq\frac{1}{2}\ell\left(  J\right)  ^{\varepsilon}\ell\left(  K\right)
^{1-\varepsilon}$ for all $K\supset J$ with $\ell\left(  K\right)  \geq
2^{r}\ell\left(  J\right)  $. See \cite{NTV3} and \cite{LaSaShUr3} for the
reduction of the norm inequality to good functions with goodness parameters
$r\in\mathbb{N}$ and $\varepsilon>0 $.

We will make use of the following bound for sums of Haar coefficients, which
is proved e.g. in \cite{LaSaUr2} and \cite{LaSaShUr3}. For any interval $J$
with center $c_{J}$, and any finite measure $\nu$, define the Poisson
integral,%
\[
\mathrm{P}\left(  J,\nu\right)  \equiv\int_{\mathbb{R}}\frac{\ell\left(
J\right)  }{\left(  \ell\left(  J\right)  +\left\vert y-c_{J}\right\vert
\right)  ^{2}}d\nu\left(  y\right)  .
\]

\begin{lemma}
[Monotonicity and Energy Lemma]\label{Energy Lemma}Suppose that $J\subset
I^{\prime}\subset2I^{\prime}\subset F$ are dyadic intervals. Then
\begin{align*}
\left\vert \left\langle H_{\sigma}\mathbf{1}_{F\setminus I^{\prime}%
},\bigtriangleup_{J}^{\omega}g\right\rangle _{\omega}\right\vert ^{2}  &
\lesssim\left(  \frac{\mathrm{P}\left(  J,\mathbf{1}_{F\setminus I^{\prime}%
}\sigma\right)  }{\ell\left(  J\right)  }\right)  ^{2}\left\Vert
\bigtriangleup_{J}^{\omega}x\right\Vert _{L^{2}\left(  \omega\right)  }%
^{2}\left\Vert \bigtriangleup_{J}^{\omega}g\right\Vert _{L^{2}\left(
\omega\right)  }^{2}\ ,\\
\left\vert \left\langle H_{\sigma}\mathbf{1}_{F\setminus I^{\prime}}%
,\sum_{J^{\prime}\subset J}a_{J^{\prime}}h_{J^{\prime}}^{\omega}\right\rangle
_{\omega}\right\vert ^{2}  & \lesssim\mathrm{P}\left(  J,\mathbf{1}%
_{F\setminus I^{\prime}}\sigma\right)  ^{2}\left\vert J\right\vert _{\omega
}\left\Vert \sum_{J^{\prime}\subset J}a_{J^{\prime}}h_{J^{\prime}}^{\omega
}\right\Vert _{L^{2}\left(  \omega\right)  }^{2}.
\end{align*}

\end{lemma}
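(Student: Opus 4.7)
The plan is to treat both inequalities as standard monotonicity/energy estimates: the smoothness of $1/(y-x)$ together with the $\omega$-mean-zero property of the test function supplies cancellation, which converts to a gain in the localized Poisson mass of $\sigma$. I would derive the coarser second bound first, then refine it by a first-order Taylor step to obtain the first. In both cases the test function $\phi$ --- either $\bigtriangleup_J^\omega g$ or $\sum_{J'\subset J} a_{J'} h_{J'}^\omega$ --- is $\omega$-mean zero on $J$, so after Fubini I may subtract the constant $1/(y-c_J)$ from the Hilbert kernel without changing the value of the pairing, obtaining
\[
\langle H_\sigma \mathbf{1}_{F\setminus I'},\phi\rangle_\omega \;=\; \int_{F\setminus I'}\!\int_J \frac{(x-c_J)\,\phi(x)}{(y-x)(y-c_J)}\,d\omega(x)\,d\sigma(y).
\]
Since $y\notin I'\supset J$, for $x\in J$ one has $|y-c_J|\geq \ell(J)/2$ and $|y-x|\geq \tfrac{1}{2}|y-c_J|$, so $(y-c_J)^{-2}$ is comparable to $(\ell(J)+|y-c_J|)^{-2}$.

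For the second inequality, I would bound $|x-c_J|\leq \ell(J)/2$ pointwise and apply Cauchy--Schwarz against $\mathbf{1}_J$: the inner integral is at most $\ell(J)(y-c_J)^{-2}|J|_\omega^{1/2}\|\phi\|_{L^2(\omega)}$. Integrating in $y$ converts the $y$-kernel into $\mathrm{P}(J,\mathbf{1}_{F\setminus I'}\sigma)/\ell(J)$ via the Poisson comparison above, and squaring yields the second inequality.

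For the first inequality, with $\phi = \bigtriangleup_J^\omega g$, I would further split the kernel to first order in $x - c_J$,
\[
\frac{x-c_J}{(y-x)(y-c_J)} \;=\; \frac{x-c_J}{(y-c_J)^2} \;+\; \frac{(x-c_J)^2}{(y-x)(y-c_J)^2}.
\]
The linear main term, paired with the mean-zero $\bigtriangleup_J^\omega g$, contributes $\int_J (x - c_J)\,\bigtriangleup_J^\omega g\,d\omega = \langle x,\bigtriangleup_J^\omega g\rangle_\omega = \langle\bigtriangleup_J^\omega x,\bigtriangleup_J^\omega g\rangle_\omega$ by self-adjointness of the Haar projection (the constant $c_J$ is killed by $\int h_J^\omega\,d\omega = 0$); Cauchy--Schwarz then supplies the factor $\|\bigtriangleup_J^\omega x\|_{L^2(\omega)}\|\bigtriangleup_J^\omega g\|_{L^2(\omega)}$, and integrating $(y-c_J)^{-2}$ in $y$ again produces $\mathrm{P}(J,\mathbf{1}_{F\setminus I'}\sigma)/\ell(J)$.

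The main obstacle is absorbing the quadratic remainder into the same bound. Using $|x-c_J|^2\leq \tfrac{\ell(J)}{2}|x-c_J|$ and $|y-x|\geq \tfrac{1}{2}|y-c_J|$, the remainder gains an extra factor $\ell(J)/|y-c_J|$ over the main term, but only up to the crude bound $\ell(J)/|y-c_J|\leq 2$ from $y\notin J$ alone, which would merely reproduce the coarser second inequality. To close the gap I would invoke the global $(r,\varepsilon)$-goodness of $J$ inside $I'$ assumed throughout the paper, which improves $\ell(J)/|y-c_J|$ to $(\ell(J)/\ell(I'))^{1-\varepsilon}$ for $y\notin I'$ with $\ell(I')\geq 2^r\ell(J)$; this makes the remainder strictly subordinate to the main term once $r$ is chosen large enough, and the first inequality follows after the same Poisson comparison and squaring as before.
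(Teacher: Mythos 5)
The paper does not actually give a proof of Lemma~\ref{Energy Lemma}; it cites \cite{LaSaUr2} and \cite{LaSaShUr3} for it, so there is no in-paper argument to compare against. Your derivation of the second (pivotal) estimate is correct, up to the separation caveat that is implicit throughout: to get $|y-x|\gtrsim|y-c_J|$ and $|y-c_J|\gtrsim\ell(J)$ for $y\in F\setminus I'$ you need a gap of order $\ell(J)$ between $J$ and $\partial I'$, which goodness supplies in every application. Your extraction of the linear main term
$\frac{x-c_J}{(y-c_J)^2}$
and the identification $\int_J(x-c_J)\bigtriangleup_J^\omega g\,d\omega=\langle\bigtriangleup_J^\omega x,\bigtriangleup_J^\omega g\rangle_\omega$ are also correct.

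The gap is in the absorption of the quadratic remainder $\frac{(x-c_J)^2}{(y-x)(y-c_J)^2}$, and it is a gap in the $x$-variable, not the $y$-variable. Once you take absolute values, the remainder pairs against $|\bigtriangleup_J^\omega g|$ through
$\int_J|x-c_J|\,|\bigtriangleup_J^\omega g|\,d\omega\le\|(x-c_J)\mathbf 1_J\|_{L^2(\omega)}\|\bigtriangleup_J^\omega g\|_{L^2(\omega)}$,
and $\|(x-c_J)\mathbf 1_J\|_{L^2(\omega)}$ is \emph{not} controlled by $\|\bigtriangleup_J^\omega x\|_{L^2(\omega)}$: their ratio is unbounded over measures $\omega$ (take $\omega$ placing nearly all of its mass in one child of $J$, so that $\|\bigtriangleup_J^\omega x\|_{L^2(\omega)}$ degenerates while the $L^2(\omega)$ norm of $(x-c_J)\mathbf 1_J$ does not). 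The goodness gain $(\ell(J)/\ell(I'))^{1-\varepsilon}$ acts only on the $y$-integral and is a fixed constant once $J\subset I'$ are fixed; it cannot absorb an unbounded ratio of $\omega$-quantities. So the remainder is subordinate only to the second (pivotal) bound, exactly as you noted before invoking goodness, and invoking goodness does not change that. Your argument therefore proves the second inequality twice and the first not at all.

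What is missing is the actual monotonicity that gives the lemma its name. For $y\notin I'$, $x\mapsto\frac{1}{y-x}$ is monotone increasing on $J$ with derivative $\frac{1}{(y-x)^2}$ that is two-sidedly comparable, uniformly in $x\in J$, to $\frac{1}{(y-c_J)^2}$ (here is where the gap is used). Integrating $d\sigma(y)$ over $F\setminus I'$ shows $H_\sigma\mathbf 1_{F\setminus I'}$ is monotone increasing on $J$ with derivative everywhere comparable to $\mathrm P(J,\mathbf 1_{F\setminus I'}\sigma)/\ell(J)$. Averaging the mean value theorem over the two children of $J$ with respect to $\omega$ then gives that
$E_{J_+}^\omega\bigl(H_\sigma\mathbf 1_{F\setminus I'}\bigr)-E_{J_-}^\omega\bigl(H_\sigma\mathbf 1_{F\setminus I'}\bigr)$
is comparable to $\frac{\mathrm P(J,\mathbf 1_{F\setminus I'}\sigma)}{\ell(J)}\bigl(E_{J_+}^\omega x-E_{J_-}^\omega x\bigr)$; equivalently, $\bigtriangleup_J^\omega\bigl(H_\sigma\mathbf 1_{F\setminus I'}\bigr)$ and $\frac{\mathrm P}{\ell(J)}\bigtriangleup_J^\omega x$ are comparable scalar multiples of $h_J^\omega$, with no remainder term at all. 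Pairing with $\bigtriangleup_J^\omega g$ yields the first inequality. This two-sided comparison of Haar coefficients, rather than a one-sided Taylor step, is exactly the feature special to the Hilbert kernel that makes the energy estimate a single term here, whereas for general Calder\'on--Zygmund kernels a second, pivotal-type term is unavoidable.
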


We will also need the following critical Poisson Decay Lemma from \cite{Vol}.

\begin{lemma}
[Poisson Decay Lemma]\label{Poisson inequality}Suppose $J\subset I\subset K$
are dyadic intervals and that $d\left(  J,\partial I\right)  >2\ell\left(
J\right)  ^{\varepsilon}\ell\left(  I\right)  ^{1-\varepsilon}$ for some
$0<\varepsilon<\frac{1}{2} $. Then for a positive Borel measure $\mu$ we have%
\begin{equation}
\mathrm{P}(J,\mu\mathbf{1}_{K\setminus I})\lesssim\left(  \frac{\ell\left(
J\right)  }{\ell\left(  I\right)  }\right)  ^{1-2\varepsilon}\mathrm{P}%
(I,\mu\mathbf{1}_{K\setminus I}).\label{e.Jsimeq}%
\end{equation}

\end{lemma}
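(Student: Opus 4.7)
The plan is to establish the bound pointwise in $y$ and then integrate. Specifically, I would show that for every $y\in\mathbb{R}\setminus I$,
\begin{equation*}
\frac{\ell(J)}{(\ell(J)+|y-c_{J}|)^{2}}\;\lesssim\;\left(\frac{\ell(J)}{\ell(I)}\right)^{\!1-2\varepsilon}\frac{\ell(I)}{(\ell(I)+|y-c_{I}|)^{2}},
\end{equation*}
after which multiplying by $d\mu(y)$ and integrating over $K\setminus I$ yields \eqref{e.Jsimeq}. Rearranging, the pointwise inequality is equivalent to
\begin{equation*}
\left(\frac{\ell(J)}{\ell(I)}\right)^{\!\varepsilon}\frac{\ell(I)+|y-c_{I}|}{\ell(J)+|y-c_{J}|}\;\lesssim\;1,
\end{equation*}
so the whole proof boils down to comparing the two linear factors $\ell(I)+|y-c_{I}|$ and $\ell(J)+|y-c_{J}|$.

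For the upper bound on the numerator, I would use that $c_{I}$ lies within $\ell(I)/2$ of $\partial I$, giving
$$|y-c_{I}|\;\leq\;\tfrac{1}{2}\ell(I)+d(y,\partial I),$$
so that $\ell(I)+|y-c_{I}|\lesssim \ell(I)+d(y,\partial I)$. For the lower bound on the denominator, I would observe that since $c_{J}\in J\subset I$ and $y\notin I$, any straight path from $c_{J}$ to $y$ crosses $\partial I$; hence
$$|y-c_{J}|\;\geq\;d(c_{J},\partial I)+d(y,\partial I)\;\geq\;d(J,\partial I)+d(y,\partial I)\;>\;2\ell(J)^{\varepsilon}\ell(I)^{1-\varepsilon}+d(y,\partial I),$$
using the goodness hypothesis. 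Therefore
$$\frac{\ell(I)+|y-c_{I}|}{\ell(J)+|y-c_{J}|}\;\lesssim\;\frac{\ell(I)+d(y,\partial I)}{\ell(J)^{\varepsilon}\ell(I)^{1-\varepsilon}+d(y,\partial I)}.$$

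A short case split on whether $d(y,\partial I)$ is smaller or larger than $\ell(I)$ shows this last quantity is at most $(\ell(I)/\ell(J))^{\varepsilon}$, which is exactly the factor needed to cancel $(\ell(J)/\ell(I))^{\varepsilon}$. The only real technical point — and the main obstacle to a naive argument — is handling $y$ very close to $\partial I$, where without goodness the denominator could be as small as $d(y,\partial I)$ while the numerator is of order $\ell(I)$; the hypothesis $d(J,\partial I)>2\ell(J)^{\varepsilon}\ell(I)^{1-\varepsilon}$ is precisely the quantitative replacement that prevents this collapse and injects the extra $(\ell(J)/\ell(I))^{1-2\varepsilon}$ decay. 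Once the pointwise estimate is in place, multiplying by $\mathbf{1}_{K\setminus I}(y)\,d\mu(y)$ and integrating is automatic.
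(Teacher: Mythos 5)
Your proposal is correct, and it is the standard argument: the paper itself gives no proof of this lemma but simply cites Volberg's CBMS notes \cite{Vol}, where the proof is exactly this pointwise comparison of the two Poisson kernels on $\mathbb{R}\setminus I$, using goodness to keep $\ell(J)+|y-c_J|$ bounded below by $\ell(J)^{\varepsilon}\ell(I)^{1-\varepsilon}+d(y,\partial I)$. Your final case split can even be streamlined, since $t\mapsto\frac{\ell(I)+t}{\ell(J)^{\varepsilon}\ell(I)^{1-\varepsilon}+t}$ is monotone decreasing on $[0,\infty)$ (as $\ell(J)^{\varepsilon}\ell(I)^{1-\varepsilon}\leq\ell(I)$), so its maximum is the value $(\ell(I)/\ell(J))^{\varepsilon}$ at $t=0$, which cancels the prefactor $(\ell(J)/\ell(I))^{\varepsilon}$ exactly as you need.
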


\subsection{First steps and outline of the decomposition of forms}

Now we can begin the proof of Theorem \ref{main}. Following \cite{NTV3} and
\cite{LaSaShUr3}, we expand the Hilbert transform bilinear form $\left\langle
H_{\sigma}f,g\right\rangle _{\omega}$ in terms of the Haar decompositions of
$f$ and $g$,%
\[
\left\langle H_{\sigma}f,g\right\rangle _{\omega}=\sum_{I,J\in\mathcal{D}%
}\left\langle H_{\sigma}\bigtriangleup_{I}^{\sigma}f,\bigtriangleup
_{J}^{\omega}g\right\rangle _{\omega},
\]
and then assuming the Haar supports of $f$ and $g$ lie in an $\left(
r,\varepsilon\right)  -\operatorname{good}$ grid $D_{\operatorname{good}}$
(see \cite{NTV3} for the origin of goodness, and also \cite{LaSaShUr3} for its
use here), we decompose the double sum above as follows,%
\begin{align*}
\left\langle H_{\sigma}f,g\right\rangle _{\omega}  & =\left\{  \sum
_{\substack{I,J\in\mathcal{D} \\J\subset_{\tau}I}}+\sum_{\substack{I,J\in
\mathcal{D} \\I\subset_{\tau}J}}+\sum_{\substack{I,J\in\mathcal{D} \\I\cap
J=\emptyset}}+\sum_{\substack{I,J\in\mathcal{D} \\J\subset I\text{ and }%
\ell\left(  J\right)  \geq2^{-\tau}\ell\left(  I\right)  }}+\sum
_{\substack{I,J\in\mathcal{D} \\I\subset J\text{ and }\ell\left(  I\right)
\geq2^{-\tau}\ell\left(  J\right)  }}\right\}  \left\langle H_{\sigma
}\bigtriangleup_{I}^{\sigma}f,\bigtriangleup_{J}^{\omega}g\right\rangle
_{\omega}\\
& \equiv\mathsf{B}_{\operatorname{below}}\left(  f,g\right)  +\mathsf{B}%
_{\operatorname{above}}\left(  f,g\right)  +\mathsf{B}%
_{\operatorname*{disjoint}}\left(  f,g\right)  +\mathsf{B}%
_{\operatorname*{comparable}}\left(  f,g\right)  +\mathsf{B}%
_{\operatorname*{comparable}}^{\ast}\left(  f,g\right)  ,
\end{align*}
where $\tau\in\mathbb{N}$ is fixed below, larger than the goodness parameter
$r$, and $J\subset_{\tau}I$ means that $J\subset I$ and $\ell\left(  J\right)
\leq2^{-\tau}\ell\left(  I\right)  $. The first two forms are symmetric, and
using a CZ-energy corona decomposition with parameter $\Gamma>1$, we
subsequently decompose the below form $\mathsf{B}_{\operatorname{below}%
}\left(  f,g\right)  $ into another four forms,
\[
\mathsf{B}_{\operatorname{below}}\left(  f,g\right)  =\mathsf{B}%
_{\operatorname{neigh}}\left(  f,g\right)  +\mathsf{B}_{\operatorname{far}%
}\left(  f,g\right)  +\mathsf{B}_{\operatorname*{para}}\left(  f,g\right)
+\mathsf{B}_{\operatorname*{stop}}\left(  f,g\right)  ,
\]
in which there is control of both averages of $f$ and a Poisson-Energy
characteristic $\Gamma\left(  \sigma,\omega\right)  $ in each corona.
Altogether, we will eventually have eleven forms in our decomposition of the
inner product $\left\langle H_{\sigma}f,g\right\rangle _{\omega}$. It turns
out that all of the forms, save for the far, paraproduct, and stopping forms,
are controlled directly by the parameter $\Gamma$ and the testing and holed
Muckenhoupt characteristics. Control of the far, paraproduct, and stopping
forms will require finiteness of the energy characteristics introduced in
\cite{LaSaUr2},%
\begin{equation}
\mathcal{E}_{2}\left(  \sigma,\omega\right)  ^{2}\equiv\sup_{\overset{\cdot
}{\bigcup}_{r=1}^{\infty}I_{r}\subset I}\left(  \frac{\mathrm{P}\left(
I_{r},\mathbf{1}_{I\setminus I_{r}}\sigma\right)  }{\ell\left(  I_{r}\right)
}\right)  ^{2}\mathsf{E}\left(  I_{r},\omega\right)  ^{2}\frac{\left\vert
I_{r}\right\vert _{\omega}}{\left\vert I\right\vert _{\sigma}}\text{ and its
dual }\mathcal{E}_{2}\left(  \omega,\sigma\right)  ,\label{energy char}%
\end{equation}
where the supremum is taken over all pairwise disjoint subdecompositions of an
interval $I$ into dyadic subintervals $I_{r}\in\mathcal{D}\left[  I\right]  $.
The holed Muckenhoupt characteristic is needed only to control the far form,
while the smaller offset Muckenhoupt characteristic suffices elsewhere.

It may help to make a couple of comments on the broad structure of the proof.
The testing characteristics are used to control forms in which the measures
$\sigma$ and $\omega$ `see each other' in inner products $\left\langle
H_{\sigma}\mathbf{1}_{K}\bigtriangleup_{I}^{\sigma}f,\bigtriangleup
_{J}^{\omega}g\right\rangle _{\omega}$ with $J\subset K$ (such as comparable
and paraproduct forms), while the holed and offset Muckenhoupt characteristics
are used to control forms in which the measures $\sigma$ and $\omega$ `do
\emph{not} see each other' in inner products $\left\langle H_{\sigma
}\mathbf{1}_{K}\bigtriangleup_{I}^{\sigma}f,\bigtriangleup_{J}^{\omega
}g\right\rangle _{\omega}$ with $K\cap J=\emptyset$ (such as disjoint,
neighbour, far and stopping forms).

In some of these cases, there arise parts of the form in which the coronas
associated to the measures are far apart, and it is then necessary to derive
additional \emph{geometric decay} in the distance between coronas, much as in
the well-known Colar-Stein Lemma.

A main tool for this is the Poisson Decay Lemma, which extracts geometric
decay from goodness of intervals in the Haar supports of the functions, as
pioneered in \cite{NTV3}. This decay is used in controlling the disjoint, far
and stopping forms. Additional sources of needed geometric decay arise from
the coronas themselves as follows. There are three coronas at play in the proof:

\begin{enumerate}
\item a top-down Calder\'{o}n-Zygmund corona that derives geometric decay from
a $\sigma$-Carleson condition, that holds automatically from the definition of
the stopping times,

\item a top-down Poisson-Energy corona that also derives geometric decay from
a $\sigma$-Carleson condition, but that is no longer automatic from the
definition, and instead requires the energy condition, as pioneered in
\cite{LaSaUr2},

\item and a bottom-up Energy corona that derives geometric decay from an
`$\omega$-energy Carleson condition', that again holds automatically from the
definition of the stopping times, as pioneered in \cite{Lac}.
\end{enumerate}

Since the first two corona constructions both yield a $\sigma$-Carleson
condition, it is convenient to bundle them together in a single CZ-PE corona
construction, see (\ref{energy stop crit}) for the associated stopping times.
This corona construction has two additional important properties, control of
averages of $f$ that is crucial for controlling the paraproduct form, and
control of a Poisson-Energy characteristic that is crucial for controlling the
stopping form. The $\sigma$-Carleson condition is used in the far, paraproduct
and stopping forms while the `$\omega$-energy Carleson condition' is used in
the stopping form.

\subsection{Control of the disjoint and comparable forms by testing and offset
Muckenhoupt characteristics}

The disjoint form $\mathsf{B}_{\operatorname*{disjoint}}\left(  f,g\right)  $,
and the comparable forms $\mathsf{B}_{\operatorname*{comparable}}\left(
f,g\right)  $ and $\mathsf{B}_{\operatorname*{comparable}}^{\ast}\left(
f,g\right)  $, are controlled in \cite[Section 4]{SaShUr12}\footnote{the proof
given there is for the $Tb$ theorem, and the reader can note that only the
offset Muckenhoupt condition is used} using only the testing characteristics,
the offset Muckenhoupt characteristic, and the following weak boundedness
characteristic,%
\[
\mathfrak{W}\left(  \sigma,\omega\right)  \equiv\sup
_{I,J\ \operatorname*{adjacent}:\ 2^{-\tau}\ell\left(  I\right)  \leq
\ell\left(  J\right)  \leq2^{\tau}\ell\left(  I\right)  }\left\vert \int
_{J}\left(  H_{\sigma}\mathbf{1}_{I}\right)  d\omega\right\vert .
\]
Similar results are in \cite{NTV3}, \cite{LaSaShUr3}, and \cite{Hyt2}. Note
that it follows from \cite[Lemma 2.4]{Hyt2} that $\mathfrak{W}\left(
\sigma,\omega\right)  \leq A_{2}^{\operatorname*{offset}}\left(  \sigma
,\omega\right)  $.

The forms $\mathsf{B}_{\operatorname{below}}\left(  f,g\right)  $ and
$\mathsf{B}_{\operatorname{above}}\left(  f,g\right)  $ are symmetric, and so
we need only bound the below form $\mathsf{B}_{\operatorname{below}}\left(
f,g\right)  $. To acccomplish this we decompose the below form $\mathsf{B}%
_{\operatorname{below}}$ further into neighbour, far, paraproduct and stopping
forms,%
\[
\mathsf{B}_{\operatorname{below}}=\mathsf{B}_{\operatorname{neigh}}%
+\mathsf{B}_{\operatorname{far}}+\mathsf{B}_{\operatorname*{para}}%
+\mathsf{B}_{\operatorname*{stop}},
\]
which are detailed as we progress through the proof.

Let%
\[
\mathcal{P}_{\operatorname{below}}\equiv\left\{  \left(  I,J\right)
\in\mathcal{D}_{\operatorname{good}}\times\mathcal{D}_{\operatorname{good}%
}:J\subset_{\tau}I\right\}
\]
be the set of pairs of $\operatorname{good}$ dyadic intervals $\left(
I,J\right)  $ with $J$ at least $\tau$ levels below and inside $I$. We begin
by splitting the $\operatorname{below}$ form into home and neighbour forms,
where $\theta K$ denotes the dyadic sibling of $K\in\mathcal{D}$,%
\begin{align*}
\mathsf{B}_{\operatorname{below}}\left(  f,g\right)   & =\sum_{\left(
I,J\right)  \in\mathcal{P}_{\operatorname{below}}}\left\langle H_{\sigma
}\left(  \mathbf{1}_{I_{J}}\bigtriangleup_{I}^{\sigma}f\right)
,\bigtriangleup_{J}^{\omega}g\right\rangle _{\omega}+\sum_{\left(  I,J\right)
\in\mathcal{P}_{\operatorname{below}}}\left\langle H_{\sigma}\left(
\mathbf{1}_{\theta I_{J}}\bigtriangleup_{I}^{\sigma}f\right)  ,\bigtriangleup
_{J}^{\omega}g\right\rangle _{\omega}\\
& \equiv\mathsf{B}_{\operatorname{home}}\left(  f,g\right)  +\mathsf{B}%
_{\operatorname{neigh}}\left(  f,g\right)  .
\end{align*}

\subsection{Control of the neighbour form by the offset Muckenhoupt
characteristic}

The neighbour form is easily controlled by the $A_{2}^{\operatorname*{offset}%
}$ condition using Lemma \ref{Energy Lemma} and the fact that the intervals
$J$ are good, namely we claim%
\[
\left\vert \mathsf{B}_{\operatorname{neigh}}\left(  f,g\right)  \right\vert
\leq C_{\varepsilon}\sqrt{A_{2}^{\operatorname*{offset}}\left(  \sigma
,\omega\right)  }\left\Vert f\right\Vert _{L^{2}(\sigma)}\left\Vert
g\right\Vert _{L^{2}(\omega)}.
\]
To see this, momentarily fix an integer $s\geq\tau$. We have
\[
\left\langle H_{\sigma}\left(  \mathbf{1}_{\theta\left(  I_{J}\right)
}\bigtriangleup_{I}^{\sigma}f\right)  ,\bigtriangleup_{J}^{\omega
}g\right\rangle _{\omega}=E_{\theta\left(  I_{J}\right)  }^{\sigma}\Delta
_{I}^{\sigma}f\cdot\left\langle H_{\sigma}\left(  \mathbf{1}_{\theta\left(
I_{J}\right)  }\right)  ,\bigtriangleup_{J}^{\omega}g\right\rangle _{\omega},
\]
and thus we can write%
\begin{equation}
\mathsf{B}_{\operatorname{neigh}}\left(  f,g\right)  =\sum_{I,J\in
\mathcal{D}_{\operatorname{good}}\text{ and }J\subset_{\tau}I}\left(
E_{\theta\left(  I_{J}\right)  }^{\sigma}\Delta_{I}^{\sigma}f\right)
\ \left\langle H_{\sigma}\left(  \mathbf{1}_{\theta\left(  I_{J}\right)
}\sigma\right)  ,\Delta_{J}^{\omega}g\right\rangle _{\omega}%
\ .\label{neighbour term}%
\end{equation}

Now we use the Poisson Decay Lemma. Assume that $J\subset_{\tau}I$. Let
$\frac{\ell\left(  J\right)  }{\ell\left(  I\right)  }=2^{-s}$ with $s\geq
\tau$ in the pivotal estimate in Lemma \ref{Energy Lemma} with $J\subset I$ to
obtain
\begin{align*}
\left\vert \langle H_{\sigma}\left(  \mathbf{1}_{\theta\left(  I_{J}\right)
}\right)  ,\Delta_{J}^{\omega}g\rangle_{\omega}\right\vert  &  \lesssim
\left\Vert \Delta_{J}^{\omega}g\right\Vert _{L^{2}\left(  \omega\right)
}\sqrt{\left\vert J\right\vert _{\omega}}\mathrm{P}\left(  J,\mathbf{1}%
_{\theta\left(  I_{J}\right)  }\sigma\right) \\
&  \lesssim\left\Vert \Delta_{J}^{\omega}g\right\Vert _{L^{2}\left(
\omega\right)  }\sqrt{\left\vert J\right\vert _{\omega}}\cdot2^{-\left(
1-2\varepsilon\right)  s}\mathrm{P}\left(  I,\mathbf{1}_{\theta\left(
I_{J}\right)  }\sigma\right)  .
\end{align*}
where we have used (\ref{e.Jsimeq}).

Now we pigeonhole pairs $\left(  I,J\right)  $ of intervals by requiring
$J\in\mathfrak{C}_{\mathcal{D}}^{\left(  s\right)  }\left(  I\right)  $, i.e.
$J\subset I$ and $\ell\left(  J\right)  =2^{-s}\ell\left(  I\right)  $, and we
further separate the two cases where $I_{J}=I_{\pm}$, the right and left
children of $I$. We have
\begin{align*}
A(I,s) &  \equiv\sum_{J\in\mathfrak{C}_{\mathcal{D}}^{\left(  s\right)
}\left(  I\right)  }\left\vert \langle H_{\sigma}\left(  \mathbf{1}%
_{\theta\left(  I_{J}\right)  }\Delta_{I}^{\sigma}f\right)  ,\Delta
_{J}^{\omega}g\rangle_{\omega}\right\vert \\
&  \leq\sum_{+,-}2^{-\left(  1-2\varepsilon\right)  s}|E_{I_{\mp}}^{\sigma
}\Delta_{I}^{\sigma}f|\ \mathrm{P}(I,\mathbf{1}_{I_{\mp}}\sigma)\sum
_{J\;:\;2^{s}\ell\left(  J\right)  =\ell\left(  I\right)  :\ J\subset I_{\pm}%
}\left\Vert \Delta_{J}^{\omega}g\right\Vert _{L^{2}\left(  \omega\right)
}\sqrt{\left\vert J\right\vert _{\omega}}\\
&  \lesssim\sum_{+,-}2^{-\left(  1-2\varepsilon\right)  s}|E_{I_{\mp}}%
^{\sigma}\Delta_{I}^{\sigma}f|\ \mathrm{P}(I,\mathbf{1}_{I_{\mp}}\sigma
)\sqrt{\left\vert I_{\pm}\right\vert _{\omega}}\sqrt{\sum_{J\in\mathfrak{C}%
_{\mathcal{D}}^{\left(  s\right)  }\left(  I_{\pm}\right)  }\left\Vert
\Delta_{J}^{\omega}g\right\Vert _{L^{2}\left(  \omega\right)  }^{2}}\,,
\end{align*}
where we have used Cauchy-Schwarz in the last line, and we also note that
\begin{equation}
\sum_{I\in\mathcal{D}}\sum_{J\in\mathfrak{C}_{\mathcal{D}}^{\left(  s\right)
}\left(  I\right)  }\left\Vert \Delta_{J}^{\omega}g\right\Vert _{L^{2}\left(
\omega\right)  }^{2}\leq\left\Vert g\right\Vert _{L^{2}(\omega)}%
^{2}\ .\label{g}%
\end{equation}

Using
\begin{equation}
\left\vert E_{\theta\left(  I_{J}\right)  }^{\sigma}\Delta_{I}^{\sigma
}f\right\vert \leq\sqrt{E_{\theta\left(  I_{J}\right)  }^{\sigma}\left\vert
\Delta_{I}^{\sigma}f\right\vert ^{2}}\leq\left\Vert \Delta_{I}^{\sigma
}f\right\Vert _{L^{2}\left(  \sigma\right)  }\ \left\vert \theta\left(
I_{J}\right)  \right\vert _{\sigma}^{-\frac{1}{2}},\label{e.haarAvg}%
\end{equation}
and the offset Muckenhoupt condition, we can thus estimate,
\begin{align*}
A(I,s)  & \lesssim\sum_{+,-}2^{-\left(  1-2\varepsilon\right)  s}\left\Vert
\Delta_{I}^{\sigma}f\right\Vert _{L^{2}\left(  \sigma\right)  }\sqrt
{\sum_{J\in\mathfrak{C}_{\mathcal{D}}^{\left(  s\right)  }\left(  I_{\pm
}\right)  }\left\Vert \Delta_{J}^{\omega}g\right\Vert _{L^{2}\left(
\omega\right)  }^{2}}\left\vert I_{\mp}\right\vert _{\sigma}^{-\frac{1}{2}%
}\mathrm{P}(I,\mathbf{1}_{I_{\mp}}\sigma)\sqrt{\left\vert I_{\pm}\right\vert
_{\omega}}\\
& \lesssim\sqrt{A_{2}^{\operatorname*{offset}}\left(  \sigma,\omega\right)
}2^{-\left(  1-2\varepsilon\right)  s}\left\Vert \Delta_{I}^{\sigma
}f\right\Vert _{L^{2}\left(  \sigma\right)  }\sqrt{\sum_{J\in\mathfrak{C}%
_{\mathcal{D}}^{\left(  s\right)  }\left(  I\right)  }\left\Vert \Delta
_{J}^{\omega}g\right\Vert _{L^{2}\left(  \omega\right)  }^{2}}\,,
\end{align*}
since $\mathrm{P}(I,\mathbf{1}_{I_{\mp}}\sigma)\lesssim\frac{\left\vert
I_{\mp}\right\vert _{\sigma}}{\left\vert I\right\vert }$ shows that
\[
\left\vert I_{\mp}\right\vert _{\sigma}^{-\frac{1}{2}}\mathrm{P}%
(I,\mathbf{1}_{I_{\mp}}\sigma)\ \sqrt{\left\vert I_{\pm}\right\vert _{\omega}%
}\lesssim\frac{\sqrt{\left\vert I_{\mp}\right\vert _{\sigma}}\sqrt{\left\vert
I_{\pm}\right\vert _{\omega}}}{\left\vert I\right\vert }\lesssim\sqrt
{A_{2}^{\operatorname*{offset}}\left(  \sigma,\omega\right)  }.
\]

An application of Cauchy-Schwarz to the sum over $I$ using (\ref{g}) then
shows that
\begin{align*}
& \sum_{I\in\mathcal{D}}A(I,s)\lesssim\sqrt{A_{2}^{\operatorname*{offset}%
}\left(  \sigma,\omega\right)  }2^{-\left(  1-2\varepsilon\right)  s}%
\sqrt{\sum_{I\in\mathcal{D}}\left\Vert \Delta_{I}^{\sigma}f\right\Vert
_{L^{2}\left(  \sigma\right)  }^{2}}\sqrt{\sum_{I\in\mathcal{D}}\sum
_{J\in\mathfrak{C}_{\mathcal{D}}^{\left(  s\right)  }\left(  I\right)
}\left\Vert \Delta_{J}^{\omega}g\right\Vert _{L^{2}\left(  \omega\right)
}^{2}}\\
& \lesssim\sqrt{A_{2}^{\operatorname*{offset}}\left(  \sigma,\omega\right)
}2^{-\left(  1-2\varepsilon\right)  s}\lVert f\rVert_{L^{2}(\sigma)}\sqrt
{\sum_{I\in\mathcal{D}}\sum_{J\in\mathfrak{C}_{\mathcal{D}}^{\left(  s\right)
}\left(  I\right)  }\left\Vert \Delta_{J}^{\omega}g\right\Vert _{L^{2}\left(
\omega\right)  }^{2}}\\
& \lesssim\sqrt{A_{2}^{\operatorname*{offset}}\left(  \sigma,\omega\right)
}2^{-\left(  1-2\varepsilon\right)  s}\lVert f\rVert_{L^{2}(\sigma)}\left\Vert
g\right\Vert _{L^{2}(\omega)}\,.
\end{align*}
Now we sum in $s\geq\tau$ to obtain,
\begin{align*}
\left\vert \mathsf{B}_{\operatorname{neigh}}\left(  f,g\right)  \right\vert  &
\leq\sum_{I,J\in\mathcal{D}\text{ and }J\subset_{\tau}I}\left\vert
\left\langle H_{\sigma}\left(  \mathbf{1}_{\theta\left(  I_{J}\right)
}\bigtriangleup_{I}^{\sigma}f\right)  ,\bigtriangleup_{J}^{\omega
}g\right\rangle _{\omega}\right\vert =\left\vert \sum_{s=\tau}^{\infty}%
\sum_{I\in\mathcal{D}}A(I,s)\right\vert \\
& \lesssim\sum_{s=\tau}^{\infty}2^{-\left(  1-2\varepsilon\right)  s}%
\sqrt{A_{2}^{\operatorname*{offset}}\left(  \sigma,\omega\right)  }\left\Vert
f\right\Vert _{L^{2}(\sigma)}\left\Vert g\right\Vert _{L^{2}(\omega)}\\
& =C_{\tau}\sqrt{A_{2}^{\operatorname*{offset}}\left(  \sigma,\omega\right)
}\left\Vert f\right\Vert _{L^{2}(\sigma)}\left\Vert g\right\Vert
_{L^{2}(\omega)}.
\end{align*}

\section{Reduction of the home form by coronas}

In order to control the home form, we must pigeonhole the pairs of intervals
$\left(  I,J\right)  \in\mathcal{P}_{\operatorname{below}}$ into a collection
of pairwise disjoint corona boxes in which both averages of $f$ are
controlled, and a Poisson-Energy characteristic is controlled. Then we split
the home form into two forms according to this decomposition, which we call
the diagonal and far forms. But first we need to construct the corona decomposition.

Fix $\Gamma>1$ and a large dyadic interval $T$. Define a sequence of stopping
times $\left\{  \mathcal{F}_{n}\right\}  _{n=0}^{\infty}$ depending on $T$,
$\sigma$ and $\omega$ recursively as follows. Let $\mathcal{F}_{0}=\left\{
T\right\}  $. Given $\mathcal{F}_{n}$, define $\mathcal{F}_{n+1}$ to consist
of the \textbf{maximal} intervals $I^{\prime}\in\mathcal{D}%
_{\operatorname{good}}$ for which there is $I\in\mathcal{F}_{n}$ with
$I^{\prime}\subset I$ and%
\begin{align}
& \text{\textbf{either} }\mathrm{P}\left(  I^{\prime},\mathbf{1}_{I\setminus
I^{\prime}}\sigma\right)  ^{2}\mathsf{E}\left(  J,\omega\right)  ^{2}%
\frac{\left\vert I^{\prime}\right\vert _{\omega}}{\left\vert I^{\prime
}\right\vert _{\sigma}}>\Gamma,\label{energy stop crit}\\
& \text{\textbf{or} }\frac{1}{\left\vert I^{\prime}\right\vert _{\sigma}}%
\int_{I^{\prime}}\left\vert \mathsf{P}_{\mathcal{D}\left[  I\right]  }%
^{\sigma}f\right\vert d\sigma>4\frac{1}{\left\vert I\right\vert _{\sigma}}%
\int_{I}\left\vert \mathsf{P}_{\mathcal{D}\left[  I\right]  }^{\sigma
}f\right\vert d\sigma,\nonumber
\end{align}
where
\[
\mathsf{E}\left(  J,\omega\right)  ^{2}\equiv\frac{1}{\left\vert J\right\vert
_{\omega}}\int_{J}\frac{1}{\ell\left(  J\right)  ^{2}}\left\vert x-\frac
{1}{\left\vert J\right\vert _{\omega}}\int_{J}zd\omega\left(  z\right)
\right\vert ^{2}d\omega\left(  x\right)  ,
\]
is the energy functional introduced in \cite{LaSaUr2}, and $\mathsf{P}%
_{\mathcal{D}\left[  I\right]  }^{\sigma}=\sum_{K\in\mathcal{D}:\ K\subset
I}\bigtriangleup_{K}^{\sigma}$. Set $\mathcal{F}\equiv\bigcup_{n=0}^{\infty
}\mathcal{F}_{n}$, which we refer to as the CZ-PE stopping times for $T$,
$\sigma$ and $\omega$, and which we note consists of $\operatorname{good}$
intervals, i.e. $\mathcal{F}\subset\mathcal{D}_{\operatorname{good}}$. Denote
the associated coronas by $\mathcal{C}_{\mathcal{F}}\left(  F\right)  $ and
the grandchildren at depth $m\in N$ of $F$ in the tree $\mathcal{F}$ by
$\mathfrak{C}_{\mathcal{F}}^{\left(  m\right)  }\left(  F\right)  $, with
$\mathfrak{C}_{\mathcal{F}}^{\left(  1\right)  }\left(  F\right)  $
abbreviated to $\mathfrak{C}_{\mathcal{F}}\left(  F\right)  $. We will
consistly use calligraphic font $\mathcal{C}$ to denote coronas, and fraktur
font $\mathfrak{C}$ to denote children. Finally, we define%
\[
\alpha_{\mathcal{F}}\left(  F\right)  \equiv\sup_{G\in\mathcal{F}:\ G\supset
F}E_{G}^{\sigma}\left\vert \mathsf{P}_{\mathcal{D}\left[  \pi_{\mathcal{F}%
}G\right]  }^{\sigma}f\right\vert ,\ \ \ \ \ \text{for }F\in\mathcal{F}.
\]

The point of introducing the corona decomposition $\mathcal{D}\left[
T\right]  =\overset{\cdot}{\bigcup}_{F\in\mathcal{F}}\mathcal{C}_{\mathcal{F}%
}\left(  F\right)  $ is that we obtain control of both the averages
$E_{I}^{\sigma}\left\vert \mathsf{P}_{\mathcal{D}\left[  F\right]
}f\right\vert \equiv\frac{1}{\left\vert I\right\vert _{\sigma}}\int
_{I}\left\vert \mathsf{P}_{\mathcal{D}\left[  F\right]  }f\right\vert d\sigma$
of projections of $f$, and the Poisson-Energy functional%
\[
\mathrm{P}\mathsf{E}_{F}\left(  I\right)  ^{2}\equiv\mathrm{P}\left(
I,\mathbf{1}_{F\setminus I}\sigma\right)  ^{2}\mathsf{E}\left(  I,\omega
\right)  ^{2}\frac{\left\vert I\right\vert _{\omega}}{\left\vert I\right\vert
_{\sigma}},
\]
in each $\operatorname{good}$ corona $\mathcal{C}_{\mathcal{F}}%
^{\operatorname{good}}\left(  F\right)  \equiv\mathcal{C}_{\mathcal{F}}\left(
F\right)  \cap\mathcal{D}_{\operatorname{good}}$, i.e.%
\[
\frac{E_{I}^{\sigma}\left\vert \mathsf{P}_{F}f\right\vert }{E_{F}^{\sigma
}\left\vert \mathsf{P}_{F}f\right\vert }\leq4\text{ and }\mathrm{P}%
\mathsf{E}_{F}\left(  I\right)  ^{2}\leq\Gamma,\ \ \ \ \ \text{for all }%
I\in\mathcal{C}_{\mathcal{F}}^{\operatorname{good}}\left(  F\right)  \text{
and }F\in\mathcal{F}.
\]
In particular this inequality shows that the \emph{Poisson-Energy
characteristic}%
\begin{equation}
\mathrm{P}\mathsf{E}_{\mathcal{F}}\left(  \sigma,\omega\right)  \equiv
\sup_{F\in\mathcal{F}}\mathrm{P}\mathsf{E}_{\mathcal{F}}^{F}\left(
\sigma,\omega\right)  ,\text{ where }\mathrm{P}\mathsf{E}_{\mathcal{F}}%
^{F}\left(  \sigma,\omega\right)  \equiv\sup_{I\in\mathcal{C}_{\mathcal{F}%
}^{\operatorname{good}}\left(  F\right)  }\mathrm{P}\mathsf{E}_{F}\left(
I\right) \label{PE char}%
\end{equation}
of $\sigma$ and $\omega$ with respect to the stopping times $\mathcal{F}$, is
dominated by the parameter $\Gamma$ chosen in (\ref{energy stop crit}). For
future reference we note that two more refinements of the characteristic will
appear in connection with the Stopping Child Lemma and the control of the
stopping form below.

\begin{description}
\item[Consequences of the energy condition] If we \emph{assume} the finiteness
of the energy characteristic (\ref{energy char}) (which is often referred to
as the \emph{energy condition}), and if we take $\Gamma>4\mathcal{E}%
_{2}\left(  \sigma,\omega\right)  $ in (\ref{energy stop crit}), we obtain a
$\sigma$-Carleson, or $\sigma$-sparse, condition for the CZ-energy stopping
times $\mathcal{F}$,%
\begin{align*}
& \sum_{F^{\prime}\in\mathfrak{C}_{\mathcal{F}}\left(  F\right)  }\left\vert
F^{\prime}\right\vert _{\sigma}\leq\frac{1}{\Gamma}\sum_{F^{\prime}%
\in\mathfrak{C}_{\mathcal{F}}\left(  F\right)  }\min\left\{  \frac
{\int_{F^{\prime}}\left\vert \mathsf{P}_{F}f\right\vert d\sigma}{E_{F}%
^{\sigma}\left\vert \mathsf{P}_{F}f\right\vert },\mathrm{P}\left(  F^{\prime
},\mathbf{1}_{F\setminus F^{\prime}}\sigma\right)  ^{2}\mathsf{E}\left(
F^{\prime},\omega\right)  ^{2}\left\vert F^{\prime}\right\vert _{\omega
}\right\} \\
& \ \ \ \ \ \ \ \ \ \ \ \ \ \ \ \ \ \ \ \ \leq\left(  \frac{1}{4}\left\vert
F\right\vert _{\sigma}+\frac{\mathcal{E}_{2}\left(  \sigma,\omega\right)
}{\Gamma}\left\vert F\right\vert _{\sigma}\right)  <\frac{1}{2}\left\vert
F\right\vert _{\sigma},\ \ \ \ \ \text{for all }F\in\mathcal{F}\ ,
\end{align*}
which can then be iterated to obtain \emph{geometric decay in generations},
\begin{equation}
\sum_{G\in\mathfrak{C}_{\mathcal{F}}^{\left(  m\right)  }\left(  F\right)
}\left\vert G\right\vert _{\sigma}\leq C_{\delta}2^{-\delta m}\left\vert
F\right\vert _{\sigma},\ \ \ \ \ \text{for all }m\in\mathbb{N}\text{ and }%
F\in\mathcal{F}\ .\label{sparse}%
\end{equation}
In addition we obtain the quasi-orthogonality inequality, see e.g.
\cite{LaSaShUr3} and \cite{SaShUr7}, both of which apply here since
$E_{F}^{\sigma}\left(  \left\vert \mathsf{P}_{F}f\right\vert \right)
=E_{F}^{\sigma}\left(  \left\vert f-E_{F}^{\sigma}f\right\vert \right)
\leq2E_{F}^{\sigma}\left(  \left\vert f\right\vert \right)  $,%
\begin{equation}
\sum_{F\in\mathcal{F}}\left\vert F\right\vert _{\sigma}\alpha_{\mathcal{F}%
}\left(  F\right)  ^{2}\leq C\int_{\mathbb{R}}\left\vert f\right\vert
^{2}d\sigma.\label{qorth}%
\end{equation}
The finiteness of the energy characteristic $\mathcal{E}_{2}\left(
\sigma,\omega\right)  $ will be needed to enforce (\ref{sparse}), that is in
turn needed to control the far, paraproduct and stopping forms, at which point
we can appeal to the following simple inequality from \cite{LaSaUr2}%
\footnote{In \cite{LaSaUr2},\ this inequality is stated with a one-tailed
Muckenhoupt characteristic $\mathcal{A}_{2}\left(  \sigma,\omega\right)  $ on
the right hand side, but as the reader easily observes, the display at the top
of page 319 in the proof given in \cite{LaSaUr2} holds for the smaller
characteristic $A_{2}^{\operatorname*{offset}}\left(  \sigma,\omega\right)
$.},%
\begin{equation}
\mathcal{E}_{2}\left(  \sigma,\omega\right)  \leq C\left(  \mathfrak{T}%
_{H}\left(  \sigma,\omega\right)  +A_{2}^{\operatorname*{offset}}\left(
\sigma,\omega\right)  \right)  ,\label{energy control}%
\end{equation}
that controls $\mathcal{E}_{2}\left(  \sigma,\omega\right)  $ by the testing
characteristic for the \emph{Hilbert transform} and the offset Muckenhoupt
characteristic. Unfortunately this simple inequality fails for most other
Calder\'{o}n-Zygmund operators in place of the Hilbert transform, including
Riesz transforms in higher dimensions, see \cite{Saw} and \cite{SaShUr11}, and
this failure limits the\ current proof to essentially just the Hilbert
transform and similar operators on the real line.
\end{description}

Now we can pigeonhole the pairs of intervals arising in the sum defining the
below form. Given the corona decomposition of $\mathcal{D}$ according to the
Calder\'{o}n-Zygmund stopping times $\mathcal{F}$ constructed above, we define
the analogous decomposition of $\mathcal{P}_{\operatorname{below}}$,%
\begin{align*}
& \mathcal{P}_{\operatorname{below}}=\bigcup_{F,G\in\mathcal{F}:\ G\subset
F}\left[  \mathcal{C}_{\mathcal{F}}\left(  F\right)  \times\mathcal{C}%
_{\mathcal{F}}\left(  G\right)  \right]  \cap\mathcal{P}_{\operatorname{below}%
}\\
& =\left\{  \bigcup_{F\in\mathcal{F}}\left[  \mathcal{C}_{\mathcal{F}}\left(
F\right)  \times\mathcal{C}_{\mathcal{F}}\left(  F\right)  \right]
\cap\mathcal{P}_{\operatorname{below}}\right\}  \bigcup\left\{  \bigcup
_{F,G\in\mathcal{F}:\ G\subsetneqq F}\left[  \mathcal{C}_{\mathcal{F}}\left(
F\right)  \times\mathcal{C}_{\mathcal{F}}\left(  G\right)  \right]
\cap\mathcal{P}_{\operatorname{below}}\right\} \\
& \equiv\mathcal{P}_{\operatorname{diag}}\bigcup\mathcal{P}%
_{\operatorname{far}}\ .
\end{align*}
Then we consider the corresponding decomposition of the home form into
diagonal and far forms,%
\begin{align*}
\mathsf{B}_{\operatorname{home}}\left(  f,g\right)   & =\sum_{\left(
I,J\right)  \in\mathcal{P}_{\operatorname{diag}}}\left\langle H_{\sigma
}\left(  \mathbf{1}_{I_{J}}\bigtriangleup_{I}^{\sigma}f\right)
,\bigtriangleup_{J}^{\omega}g\right\rangle _{\omega}+\sum_{\left(  I,J\right)
\in\mathcal{P}_{\operatorname{far}}}\left\langle H_{\sigma}\left(
\mathbf{1}_{I_{J}}\bigtriangleup_{I}^{\sigma}f\right)  ,\bigtriangleup
_{J}^{\omega}g\right\rangle _{\omega}\\
& \equiv\mathsf{B}_{\operatorname{diag}}\left(  f,g\right)  +\mathsf{B}%
_{\operatorname{far}}\left(  f,g\right)  .
\end{align*}

We next decompose the $\operatorname{far}$ form into corona pieces using
$\mathcal{P}_{\operatorname{far}}^{F,G}\equiv\left[  \mathcal{C}_{\mathcal{F}%
}\left(  F\right)  \times\mathcal{C}_{\mathcal{F}}\left(  G\right)  \right]
\cap\mathcal{P}_{\operatorname{below}}$,%

\begin{align*}
\mathsf{B}_{\operatorname{far}}\left(  f,g\right)   & =\sum_{F,G\in
\mathcal{F}:\ G\subsetneqq F}\left\langle H_{\sigma}\left(  \sum
_{I\in\mathcal{C}_{\mathcal{F}}\left(  F\right)  }\mathbf{1}_{I_{J}%
}\bigtriangleup_{I}^{\sigma}f\right)  ,\sum_{J\in\mathcal{C}_{\mathcal{F}%
}\left(  G\right)  :\ J\subset_{\tau}I}\bigtriangleup_{J}^{\omega
}g\right\rangle _{\omega}\\
& =\sum_{F,G\in\mathcal{F}:\ G\subsetneqq F}\sum_{\left(  I,J\right)
\in\mathcal{P}_{\operatorname{far}}^{F,G}}\left\langle H_{\sigma}\left(
\mathbf{1}_{I_{J}}\bigtriangleup_{I}^{\sigma}f\right)  ,\bigtriangleup
_{J}^{\omega}g\right\rangle _{\omega}=\sum_{F,G\in\mathcal{F}:\ G\subsetneqq
F}\mathsf{B}_{\operatorname{far}}^{F,G}\left(  f,g\right) \\
\text{where }\mathsf{B}_{\operatorname{far}}^{F,G}\left(  f,g\right)   &
\equiv\sum_{\left(  I,J\right)  \in\mathcal{P}_{\operatorname{far}}^{F,G}%
}\left\langle H_{\sigma}\left(  \mathbf{1}_{I_{J}}\bigtriangleup_{I}^{\sigma
}f\right)  ,\bigtriangleup_{J}^{\omega}g\right\rangle _{\omega}.
\end{align*}
Now for $m>\tau$ and $F\in\mathcal{F}$ we define%
\begin{align*}
\mathsf{B}_{\operatorname{far}}^{F,m}\left(  f,g\right)   & \equiv\sum
_{G\in\mathfrak{C}_{\mathcal{F}}^{\left(  m\right)  }\left(  F\right)
}\mathsf{B}_{\operatorname{far}}^{F,G}\left(  f,g\right)  =\sum_{G\in
\mathfrak{C}_{\mathcal{F}}^{\left(  m\right)  }\left(  F\right)  }%
\sum_{\left(  I,J\right)  \in\mathcal{P}_{\operatorname{far}}^{F,G}%
}\left\langle H_{\sigma}\left(  \mathbf{1}_{I_{J}}\bigtriangleup_{I}^{\sigma
}f\right)  ,\bigtriangleup_{J}^{\omega}g\right\rangle _{\omega}\\
& =\sum_{F^{\prime}\in\mathfrak{C}_{\mathcal{F}}\left(  F\right)  }\sum
_{G\in\mathfrak{C}_{\mathcal{F}}^{\left(  m-1\right)  }\left(  F^{\prime
}\right)  }\sum_{J\in\mathcal{C}_{\mathcal{F}}\left(  G\right)  }\left\langle
H_{\sigma}\left(  \sum_{I\in\mathcal{C}_{\mathcal{F}}\left(  F\right)
:\ J\subset_{\tau}I}\mathbf{1}_{I_{J}}\bigtriangleup_{I}^{\sigma}f\right)
,\bigtriangleup_{J}^{\omega}g\right\rangle _{\omega}.
\end{align*}

\subsection{Control of the far form by way of functional energy}

Here we will control the $\operatorname{far}$ form $\mathsf{B}%
_{\operatorname{far}}\left(  f,g\right)  $ first by the local testing and
functional energy characteristic. The $\operatorname{far}$ form is given by%
\begin{align*}
\mathsf{B}_{\operatorname{far}}\left(  f,g\right)   & =\sum_{m=1}^{\infty}%
\sum_{F\in\mathcal{F}}\sum_{G\in\mathfrak{C}_{\mathcal{F}}^{\left(  m\right)
}\left(  F\right)  }\sum_{J\in\mathcal{C}_{\mathcal{F}}\left(  G\right)
}\left\langle H_{\sigma}\left(  \sum_{I\in\mathcal{C}_{\mathcal{F}}\left(
F\right)  :\ J\subset_{\tau}I}\mathbf{1}_{I_{J}}\bigtriangleup_{I}^{\sigma
}f\right)  ,\bigtriangleup_{J}^{\omega}g\right\rangle _{\omega}\\
& =\sum_{G\in\mathcal{F}}\sum_{J\in\mathcal{C}_{\mathcal{F}}\left(  G\right)
}\left\langle H_{\sigma}\left(  \sum_{I\in\mathcal{D}:\ G\subsetneqq I\text{
and }J\subset_{\tau}I}\mathbf{1}_{I_{J}}\bigtriangleup_{I}^{\sigma}f\right)
,\bigtriangleup_{J}^{\omega}g\right\rangle _{\omega},
\end{align*}
which we write with the dummy variable $G$ replaced by $F$,%
\[
\mathsf{B}_{\operatorname{far}}\left(  f,g\right)  =\sum_{F\in\mathcal{F}}%
\sum_{J\in\mathcal{C}_{\mathcal{F}}\left(  F\right)  }\left\langle H_{\sigma
}\left(  \sum_{I\in\left(  F,T\right]  \text{ and }J\subset_{\tau}I}%
\mathbf{1}_{I_{J}}\bigtriangleup_{I}^{\sigma}f\right)  ,\bigtriangleup
_{J}^{\omega}g\right\rangle _{\omega}.
\]
Given any collection $\mathcal{H}\subset\mathcal{D}$ of intervals, and a
dyadic interval $J$, we define the corresponding Haar projection
$\mathsf{P}_{\mathcal{H}}^{\omega}$ and its localization $\mathsf{P}%
_{\mathcal{H};J}^{\omega}$ to $J$ by%
\begin{equation}
\mathsf{P}_{\mathcal{H}}^{\omega}=\sum_{H\in\mathcal{H}}\bigtriangleup
_{H}^{\omega}\text{ and }\mathsf{P}_{\mathcal{H};J}^{\omega}=\sum
_{H\in\mathcal{H}:\ H\subset J}\bigtriangleup_{H}^{\omega}%
\ .\label{def localization}%
\end{equation}

\begin{definition}
Given any interval $F\in\mathcal{D}$, we define the $\left(  r,\varepsilon
\right)  $\emph{-Whitney} collection $\mathcal{M}_{\left(  r,\varepsilon
\right)  -\operatorname*{deep}}\left(  F\right)  $ of $F$ to be the set of
dyadic subintervals $W\subset F$ that are maximal with respect to the property
that $W\subset_{r,\varepsilon}F$. Let $\mathcal{D}\left[  W\right]
\equiv\left\{  J\in\mathcal{D}:J\subset W\right\}  $.
\end{definition}

Clearly the intervals in $\mathcal{M}_{\left(  r,\varepsilon\right)
-\operatorname*{deep}}\left(  F\right)  $ form a pairwise disjoint
decomposition of $F $.

\begin{definition}
Let $\mathfrak{F}\left(  \sigma,\omega\right)  $ be the smallest constant in
the `\textbf{f}unctional energy' inequality below, holding for all $h\in
L^{2}\left(  \sigma\right)  $ and all collections $\mathcal{F}\subset
\mathcal{D}$, and where $x$ denotes the identity function on $\mathbb{R}$:
\begin{equation}
\sum_{F\in\mathcal{F}}\sum_{W\in\mathcal{M}_{\left(  r,\varepsilon\right)
-\operatorname*{deep}}\left(  F\right)  }\left(  \frac{\mathrm{P}\left(
W,h\mathbf{1}_{F^{c}}\sigma\right)  }{\ell\left(  W\right)  }\right)  ^{2}%
\int_{W}\left\vert \mathsf{P}_{\mathcal{C}_{\mathcal{F}}\left(  F\right)
\cap\mathcal{D}\left[  W\right]  }^{\omega}x\right\vert ^{2}d\omega
\leq\mathfrak{F}\left(  \sigma,\omega\right)  \lVert h\rVert_{L^{2}\left(
\sigma\right)  }\,.\label{func ener}%
\end{equation}

\end{definition}

There is a similar definition of the dual constant $\mathfrak{F}^{\ast}\left(
\omega,\sigma\right)  $. The Intertwining Proposition will control the
following Intertwining form,%
\[
\mathsf{B}_{\operatorname{Inter}}\left(  f,g\right)  \equiv\sum_{F\in
\mathcal{F}}\ \sum_{I:\ I\supsetneqq F}\ \left\langle H_{\sigma}\left(
\mathbf{1}_{I_{F}}\bigtriangleup_{I}^{\sigma}f\right)  ,\mathsf{P}%
_{\mathcal{C}_{\mathcal{F}}\left(  F\right)  }^{\omega}g\right\rangle
_{\omega}\ ,
\]
whose difference from $\mathsf{B}_{\operatorname{far}}\left(  f,g\right)  $ is%
\begin{align*}
\mathsf{B}_{\operatorname{far}}\left(  f,g\right)  -\mathsf{B}%
_{\operatorname{Inter}}\left(  f,g\right)   & =\sum_{F\in\mathcal{F}}%
\sum_{I\in\left(  F,T\right]  }\sum_{J\in\mathcal{C}_{\mathcal{F}}\left(
F\right)  \text{ and }J\subset_{\tau}I}\left\langle H_{\sigma}\left(
\mathbf{1}_{I_{J}}\bigtriangleup_{I}^{\sigma}f\right)  ,\bigtriangleup
_{J}^{\omega}g\right\rangle _{\omega}\\
& -\sum_{F\in\mathcal{F}}\ \sum_{I\in\left(  F,T\right]  }\ \sum
_{J\in\mathcal{C}_{\mathcal{F}}\left(  F\right)  }\left\langle H_{\sigma
}\left(  \mathbf{1}_{I_{F}}\bigtriangleup_{I}^{\sigma}f\right)
,\bigtriangleup_{J}^{\omega}g\right\rangle _{\omega}\\
& =\sum_{F\in\mathcal{F}}\sum_{I\in\left(  F,T\right]  }\sum_{\substack{J\in
\mathcal{C}_{\mathcal{F}}\left(  F\right)  \\\ell\left(  J\right)  \geq
\ell\left(  F\right)  -\tau\text{ and }J\subset_{\tau}I}}\left\langle
H_{\sigma}\left(  \mathbf{1}_{I_{F}}\bigtriangleup_{I}^{\sigma}f\right)
,\bigtriangleup_{J}^{\omega}g\right\rangle _{\omega}.
\end{align*}
This difference form is easily controlled using the proofs of the disjoint and
comparable forms above,
\begin{align*}
& \left\vert \mathsf{B}_{\operatorname{far}}\left(  f,g\right)  -\mathsf{B}%
_{\operatorname{Inter}}\left(  f,g\right)  \right\vert \leq\sum_{F\in
\mathcal{F}}\sum_{I\in\left(  F,T\right]  }\sum_{\substack{J\in\mathcal{C}%
_{\mathcal{F}}\left(  F\right)  \\\ell\left(  J\right)  \geq\ell\left(
F\right)  -\tau\text{ and }J\subset_{\tau}I}}\left\vert \left\langle
H_{\sigma}\left(  \mathbf{1}_{I_{F}}\bigtriangleup_{I}^{\sigma}f\right)
,\bigtriangleup_{J}^{\omega}g\right\rangle _{\omega}\right\vert \\
& \lesssim\left(  \mathfrak{T}_{H}\left(  \sigma,\omega\right)  +\mathcal{A}%
_{2}^{\operatorname{hole}}\left(  \sigma,\omega\right)  \right)  \ \left\Vert
f\right\Vert _{L^{2}\left(  \sigma\right)  }\left\Vert g\right\Vert
_{L^{2}\left(  \omega\right)  }\ .
\end{align*}

\begin{definition}
\label{sigma carleson n}A collection $\mathcal{F}$ of dyadic intervals is
$\sigma$\emph{-Carleson} if%
\[
\sum_{F\in\mathcal{F}:\ F\subset S}\left\vert F\right\vert _{\sigma}\leq
C_{\mathcal{F}}\left\vert S\right\vert _{\sigma},\ \ \ \ \ S\in\mathcal{F}.
\]
The constant $C_{\mathcal{F}}$ is referred to as the Carleson norm of
$\mathcal{F}$.
\end{definition}

We now show that the functional energy inequality (\ref{func ener}), together
with local interval testing, suffices to prove the following Intertwining
Proposition, \cite{SaShUr7} and \cite{SaShUr12}, when $\mathcal{F}$ is
$\sigma$-Carleson.

Let $\mathcal{F}$ be any subset of $\mathcal{D}$. For any $J\in\mathcal{D}$,
we define $\pi_{\mathcal{F}}^{0}J$ to be the smallest $F\in\mathcal{F}$ that
contains $J$. Then for $s\geq1$, we recursively define $\pi_{\mathcal{F}}%
^{s}J$ to be the smallest $F\in\mathcal{F}$ that \emph{strictly} contains
$\pi_{\mathcal{F}}^{s-1}J$. This definition satisfies $\pi_{\mathcal{F}}%
^{s+t}J=\pi_{\mathcal{F}}^{s}\pi_{\mathcal{F}}^{t}J$ for all $s,t\geq0$ and
$J\in\mathcal{D}$. In particular $\pi_{\mathcal{F}}^{s}J=\pi_{\mathcal{F}}%
^{s}F$ where $F=\pi_{\mathcal{F}}^{0}J$. In the special case $\mathcal{F}%
=\mathcal{D}$ we often suppress the subscript $\mathcal{F}$ and simply write
$\pi^{s}$ for $\pi_{\mathcal{D}}^{s}$. Finally, for $F\in\mathcal{F}$, we
write $\mathfrak{C}_{\mathcal{F}}\left(  F\right)  \equiv\left\{  F^{\prime
}\in\mathcal{F}:\pi_{\mathcal{F}}^{1}F^{\prime}=F\right\}  $ for the
collection of $\mathcal{F}$-children of $F$.

\begin{proposition}
[The Intertwining Proposition]\label{strongly adapted'}Suppose that
$\mathcal{F}$ is $\sigma$-Carleson. Then%
\[
\left\vert \sum_{F\in\mathcal{F}}\ \sum_{I:\ I\supsetneqq F}\ \left\langle
H_{\sigma}\left(  \mathbf{1}_{I_{F}}\bigtriangleup_{I}^{\sigma}f\right)
,\mathsf{P}_{\mathcal{C}_{\mathcal{F}}\left(  F\right)  }^{\omega
}g\right\rangle _{\omega}\right\vert \lesssim\left(  \mathfrak{F}\left(
\sigma,\omega\right)  +\mathfrak{T}_{H}\left(  \sigma,\omega\right)  \right)
\ \left\Vert f\right\Vert _{L^{2}\left(  \sigma\right)  }\left\Vert
g\right\Vert _{L^{2}\left(  \omega\right)  }.
\]

\end{proposition}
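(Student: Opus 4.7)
The plan is to rewrite the Intertwining form by swapping sums so that each $F\in\mathcal{F}$ is paired with a single test function
\[
\phi_F \equiv \sum_{I:\ I\supsetneqq F}\mathbf{1}_{I_F}\bigtriangleup_I^\sigma f.
\]
Because $\bigtriangleup_I^\sigma f$ is constant on each of its children, and the dyadic ancestors of $F$ form a chain $F\subsetneqq\pi F\subsetneqq \pi^2F\subsetneqq\cdots\subset T$ with $(\pi^kF)_F=\pi^{k-1}F$, this sum telescopes: $\phi_F\equiv E_F^\sigma f-E_T^\sigma f$ on $F$, while on the annulus $\pi^k F\setminus\pi^{k-1}F$ one has $\phi_F\equiv E_{\pi^k F}^\sigma f-E_T^\sigma f$. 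Split $\phi_F=\mathbf{1}_F\phi_F+\mathbf{1}_{F^c}\phi_F$, giving $\mathsf{B}_{\func{Inter}}(f,g)=\mathsf{B}^{\func{in}}(f,g)+\mathsf{B}^{\func{out}}(f,g)$.

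For $\mathsf{B}^{\func{in}}$, the restriction $\mathbf{1}_F\phi_F=(E_F^\sigma f-E_T^\sigma f)\mathbf{1}_F$ is a scalar multiple of $\mathbf{1}_F$. Pulling the scalar out and applying Cauchy--Schwarz in $\omega$ together with the testing bound $\lVert H_\sigma\mathbf{1}_F\rVert_{L^2(\omega)}\le\mathfrak{T}_H(\sigma,\omega)\sqrt{\lvert F\rvert_\sigma}$ yields
\[
\lvert\mathsf{B}^{\func{in}}(f,g)\rvert\;\le\;\mathfrak{T}_H(\sigma,\omega)\sum_{F\in\mathcal{F}}\lvert E_F^\sigma f-E_T^\sigma f\rvert\sqrt{\lvert F\rvert_\sigma}\;\bigl\lVert\mathsf{P}_{\mathcal{C}_\mathcal{F}(F)}^\omega g\bigr\rVert_{L^2(\omega)}.
\]
A further Cauchy--Schwarz in $F$ bounds the $g$-factor by $\lVert g\rVert_{L^2(\omega)}$ (the coronas $\{\mathcal{C}_\mathcal{F}(F)\}$ disjointly exhaust the $\omega$-Haar support of $g$) and bounds the $f$-factor by $\lVert f\rVert_{L^2(\sigma)}$ upon writing the difference $E_F^\sigma f-E_T^\sigma f$ as a telescoping sum of $\mathcal{F}$-jumps $\lvert E_G^\sigma f-E_{\pi_\mathcal{F}G}^\sigma f\rvert\le\alpha_\mathcal{F}(G)$, then invoking the geometric decay (\ref{sparse}) and the quasi-orthogonality inequality (\ref{qorth}), both of which are available because $\mathcal{F}$ is $\sigma$-Carleson.

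For $\mathsf{B}^{\func{out}}$, the key structural point is that $\mathsf{P}_{\mathcal{C}_\mathcal{F}(F)}^\omega g$ lives in $F$ while $\mathbf{1}_{F^c}\phi_F$ lives on $F^c$, so monotonicity applies. Decompose $F$ into its $(r,\varepsilon)$-Whitney pieces $W\in\mathcal{M}_{(r,\varepsilon)-\func{deep}}(F)$; goodness places every $J\in\mathcal{C}_\mathcal{F}(F)\cap\mathcal{D}[W]$ deeply inside $W$. Combining the Monotonicity/Energy Lemma with the Poisson Decay Lemma (\ref{e.Jsimeq}) to aggregate over such $J$ produces the consolidated estimate
\[
\bigl\lvert\langle H_\sigma(\mathbf{1}_{F^c}\phi_F\,\sigma),\,\mathsf{P}_{\mathcal{C}_\mathcal{F}(F)\cap\mathcal{D}[W]}^\omega g\rangle_\omega\bigr\rvert\lesssim\frac{\mathrm{P}(W,\lvert\phi_F\rvert\mathbf{1}_{F^c}\sigma)}{\ell(W)}\bigl\lVert\mathsf{P}_{\mathcal{C}_\mathcal{F}(F)\cap\mathcal{D}[W]}^\omega x\bigr\rVert_{L^2(\omega)}\bigl\lVert\mathsf{P}_{\mathcal{C}_\mathcal{F}(F)\cap\mathcal{D}[W]}^\omega g\bigr\rVert_{L^2(\omega)}.
\]
A double Cauchy--Schwarz in $(F,W)$ then separates off a $g$-factor bounded by $\lVert g\rVert_{L^2(\omega)}$ from a residual factor matching the left-hand side of the functional-energy inequality (\ref{func ener}).

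The main delicate step is that (\ref{func ener}) requires a single universal $h$ in place of the $F$-dependent majorant $\lvert\phi_F\rvert\mathbf{1}_{F^c}$. This is handled by the pointwise bound $\lvert\phi_F(y)\rvert\le 2\mathcal{M}^\sigma f(y)$ on $F^c$, where $\mathcal{M}^\sigma$ is the dyadic maximal function: both averages $E_{P(y)}^\sigma f$ and $E_T^\sigma f$ appearing in $\phi_F(y)$ are over intervals containing $y$, hence each is at most $\mathcal{M}^\sigma f(y)$. Monotonicity of the Poisson integral in the underlying measure lets one replace $\lvert\phi_F\rvert\mathbf{1}_{F^c}\sigma$ by $2\mathcal{M}^\sigma f\,\mathbf{1}_{F^c}\sigma$ in the preceding display, and then (\ref{func ener}) with $h=2\mathcal{M}^\sigma f$, together with Doob's maximal inequality $\lVert\mathcal{M}^\sigma f\rVert_{L^2(\sigma)}\lesssim\lVert f\rVert_{L^2(\sigma)}$, yields $\lvert\mathsf{B}^{\func{out}}(f,g)\rvert\lesssim\mathfrak{F}(\sigma,\omega)\lVert f\rVert_{L^2(\sigma)}\lVert g\rVert_{L^2(\omega)}$. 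Adding the inner estimate completes the proof.
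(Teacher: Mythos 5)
Your proposal is correct and follows the same overall architecture as the paper: telescope $\phi_F$ explicitly, split $\phi_F=\mathbf{1}_F\phi_F+\mathbf{1}_{F^c}\phi_F$, handle the inner piece with testing plus quasi-orthogonality, and handle the outer piece with the Monotonicity/Energy Lemma, the $(r,\varepsilon)$-Whitney decomposition of $F$, and the functional energy inequality. The two places you deviate are worth noting, and both are legitimate.

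For the outer piece, the paper bounds $\lvert f_F\rvert\mathbf{1}_{F^c}$ by the auxiliary stacked function $\Phi=\sum_{F''\in\mathcal{F}}\alpha_{\mathcal{F}}(F'')\mathbf{1}_{F''}$ and then plugs $h=\Phi$ into (\ref{func ener}), needing the bound $\lVert\Phi\rVert_{L^{2}(\sigma)}\lesssim\lVert f\rVert_{L^{2}(\sigma)}$, which rests on the $\sigma$-Carleson condition for $\mathcal{F}$ (via Carleson embedding / quasi-orthogonality; the displayed pointwise comparison $\Phi\lesssim M_{\sigma}f$ alone would not be sound, since $\Phi$ stacks an unbounded number of averages). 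Your observation that the telescoped value of $\phi_F$ on any annulus $\pi^kF\setminus\pi^{k-1}F$ is a difference of two averages over intervals containing the point, hence $\lvert\phi_F\rvert\le 2\mathcal{M}^{\sigma}f$ pointwise and uniformly in $F$, lets you take $h=2\mathcal{M}^{\sigma}f$ directly and close with Doob's inequality. This is a genuinely cleaner route to the same estimate: it bypasses $\Phi$ entirely, and the Carleson hypothesis is then used only for the inner piece.

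For the inner piece, you telescope $E_F^{\sigma}f-E_T^{\sigma}f$ through all $\mathcal{F}$-ancestors, bound each jump by $\alpha_{\mathcal{F}}(\cdot)$, and re-sum using the geometric decay (\ref{sparse}) and quasi-orthogonality (\ref{qorth}). This works, though it is a longer path than the paper's, which simply observes that the single constant value of $f_F$ on $F$ is already dominated by $\alpha_{\mathcal{F}}(F)$ and then applies (\ref{qorth}) once. Both arguments use exactly the two ingredients the $\sigma$-Carleson hypothesis buys: (\ref{sparse}) and (\ref{qorth}). One small caveat you implicitly pass over: (\ref{sparse}) as stated in the paper is derived under the energy hypothesis with the parameter $\Gamma$ large, i.e.\ from a sparse (Carleson constant $<1$) stopping family; deducing it from a generic Carleson constant $C_{\mathcal{F}}$ requires the standard splitting of a Carleson family into boundedly many sparse subfamilies. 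That is routine, but worth a sentence if you write this out in full.
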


\begin{proof}
We write the left hand side of the display above as%
\[
\sum_{F\in\mathcal{F}}\ \sum_{I:\ I\supsetneqq F}\ \left\langle H_{\sigma
}\left(  \mathbf{1}_{I_{F}}\bigtriangleup_{I}^{\sigma}f\right)  ,g_{F}%
\right\rangle _{\omega}=\sum_{F\in\mathcal{F}}\ \left\langle H_{\sigma}\left(
\sum_{I:\ I\supsetneqq F}\mathbf{1}_{I_{F}}\bigtriangleup_{I}^{\sigma
}f\right)  ,g_{F}\right\rangle _{\omega}\equiv\sum_{F\in\mathcal{F}%
}\ \left\langle H_{\sigma}f_{F},g_{F}\right\rangle _{\omega}\ ,
\]
where%
\[
g_{F}=\mathsf{P}_{\mathcal{C}_{\mathcal{F}}\left(  F\right)  }^{\omega}%
g=\sum_{J\in\mathcal{C}_{\mathcal{F}}\left(  F\right)  }\bigtriangleup
_{J}^{\omega}g\text{ and }f_{F}\equiv\sum_{I:\ I\supsetneqq F}\mathbf{1}%
_{I_{F}}\bigtriangleup_{I}^{\sigma}f\ .
\]
Note that $g_{F}$ is supported in $F$, and that $f_{F}$ is constant on $F$.
The intervals $I$ occurring in this sum are linearly and consecutively ordered
by inclusion, along with the intervals $F^{\prime}\in\mathcal{F}$ that contain
$F$. More precisely, we can write%
\[
F\equiv F_{0}\subsetneqq F_{1}\subsetneqq F_{2}\subsetneqq...\subsetneqq
F_{n}\subsetneqq F_{n+1}\subsetneqq...F_{N}%
\]
where $F_{m}=\pi_{\mathcal{F}}^{m}F$ for all $m\geq1$. We can also write%
\[
F=F_{0}\subsetneqq I_{1}\subsetneqq I_{2}\subsetneqq...\subsetneqq
I_{k}\subsetneqq I_{k+1}\subsetneqq...\subsetneqq I_{K}=F_{N}%
\]
where $I_{k}=\pi_{\mathcal{D}}^{k}F$ for all $k\geq1$, and by convention we
set $I_{0}=F$. There is a (unique) subsequence $\left\{  k_{m}\right\}
_{m=1}^{N}$ such that%
\[
F_{m}=I_{k_{m}},\ \ \ \ \ 1\leq m\leq N.
\]

Recall that%
\[
f_{F}\left(  x\right)  =\sum_{k=1}^{\infty}\mathbf{1}_{\left(  I_{k}\right)
_{F}}\left(  x\right)  \bigtriangleup_{I_{k}}^{\sigma}f\left(  x\right)
=\sum_{k=1}^{\infty}\mathbf{1}_{I_{k}\setminus I_{k-1}}\left(  x\right)
\sum_{\ell=k+1}^{\infty}\bigtriangleup_{I_{\ell}}^{\sigma}f\left(  x\right)  .
\]
Assume now that $k_{m}\leq k<k_{m+1}$. Using a telescoping sum, we compute
that for
\[
x\in I_{k+1}\setminus I_{k}\subset F_{m+1}\setminus F_{m},
\]
we have
\[
\left\vert f_{F}\left(  x\right)  \right\vert =\left\vert \sum_{\ell
=k+2}^{\infty}\bigtriangleup_{I_{\ell}}^{\sigma}f\left(  x\right)  \right\vert
=\left\vert \mathbb{E}_{\theta I_{k+2}}^{\sigma}f-\mathbb{E}_{I_{K}}^{\sigma
}f\right\vert \lesssim\mathbb{E}_{F_{m+1}}^{\sigma}\left\vert f\right\vert \ .
\]
Now $f_{F}$ is constant on $F$ and%
\begin{align*}
\left\vert f_{F}\right\vert  & \leq\sum_{m=0}^{N}\left(  \mathbb{E}_{F_{m+1}%
}^{\sigma}\left\vert f\right\vert \right)  \ \mathbf{1}_{F_{m+1}\setminus
F_{m}}=\left(  \mathbb{E}_{F}^{\sigma}\left\vert f\right\vert \right)
\ \mathbf{1}_{F}+\sum_{m=0}^{N}\left(  \mathbb{E}_{\pi_{\mathcal{F}}^{m+1}%
F}^{\sigma}\left\vert f\right\vert \right)  \ \mathbf{1}_{\pi_{\mathcal{F}%
}^{m+1}F\setminus\pi_{\mathcal{F}}^{m}F}\\
& =\left(  \mathbb{E}_{F}^{\sigma}\left\vert f\right\vert \right)
\ \mathbf{1}_{F}+\sum_{F^{\prime}\in\mathcal{F}:\ F\subset F^{\prime}}\left(
\mathbb{E}_{\pi_{\mathcal{F}}F^{\prime}}^{\sigma}\left\vert f\right\vert
\right)  \ \mathbf{1}_{\pi_{\mathcal{F}}F^{\prime}\setminus F^{\prime}}\\
& \leq\alpha_{\mathcal{F}}\left(  F\right)  \ \mathbf{1}_{F}+\sum_{F^{\prime
}\in\mathcal{F}:\ F\subset F^{\prime}}\alpha_{\mathcal{F}}\left(
\pi_{\mathcal{F}}F^{\prime}\right)  \ \mathbf{1}_{\pi_{\mathcal{F}}F^{\prime
}\setminus F^{\prime}}\\
& \leq\alpha_{\mathcal{F}}\left(  F\right)  \ \mathbf{1}_{F}+\sum_{F^{\prime
}\in\mathcal{F}:\ F\subset F^{\prime}}\alpha_{\mathcal{F}}\left(
\pi_{\mathcal{F}}F^{\prime}\right)  \ \mathbf{1}_{\pi_{\mathcal{F}}F^{\prime}%
}\ \mathbf{1}_{F^{c}}\\
& =\alpha_{\mathcal{F}}\left(  F\right)  \ \mathbf{1}_{F}+\Phi\ \mathbf{1}%
_{F^{c}}\ ,\ \ \ \ \ \text{\ for all }F\in\mathcal{F},
\end{align*}
where%
\[
\Phi\equiv\sum_{F^{\prime\prime}\in\mathcal{F}}\ \alpha_{\mathcal{F}}\left(
F^{\prime\prime}\right)  \ \mathbf{1}_{F^{\prime\prime}}\ .
\]

Now we write%
\[
\sum_{F\in\mathcal{F}}\ \left\langle H_{\sigma}f_{F},g_{F}\right\rangle
_{\omega}=\sum_{F\in\mathcal{F}}\ \left\langle H_{\sigma}\left(
\mathbf{1}_{F}f_{F}\right)  ,g_{F}\right\rangle _{\omega}+\sum_{F\in
\mathcal{F}}\ \left\langle H_{\sigma}\left(  \mathbf{1}_{F^{c}}f_{F}\right)
,g_{F}\right\rangle _{\omega}\equiv I+II.
\]
Then local interval testing and quasi-orthogonality, together with the fact
that $f_{F}$ is a constant on $F$ bounded by $\alpha_{\mathcal{F}}\left(
F\right)  $, give%
\begin{align*}
& \left\vert I\right\vert =\left\vert \sum_{F\in\mathcal{F}}\int_{\mathbb{R}%
}\mathbf{1}_{F}\left(  x\right)  H_{\sigma}\left(  \mathbf{1}_{F}f_{F}\right)
\left(  x\right)  \ g_{F}\left(  x\right)  \ d\omega\left(  x\right)
\right\vert \\
& \leq\sqrt{\sum_{F\in\mathcal{F}}\int_{\mathbb{R}}\left\vert \alpha
_{\mathcal{F}}\left(  F\right)  \mathbf{1}_{F}\left(  x\right)  H_{\sigma
}\left(  \mathbf{1}_{F}\right)  \left(  x\right)  \right\vert ^{2}%
d\omega\left(  x\right)  }\ \sqrt{\sum_{F\in\mathcal{F}}\int_{\mathbb{R}%
}\left\vert g_{F}\left(  x\right)  \right\vert ^{2}d\omega\left(  x\right)
}\\
& \lesssim\mathfrak{T}_{H}^{\operatorname{loc}}\left(  \sigma,\omega\right)
\sqrt{\sum_{F\in\mathcal{F}}\int_{\mathbb{R}}\left\vert \alpha_{\mathcal{F}%
}\left(  F\right)  \right\vert ^{2}\left\vert F\right\vert _{\sigma}%
}\left\Vert g\right\Vert _{L^{2}\left(  \omega\right)  }\lesssim
\mathfrak{T}_{H}^{\operatorname{loc}}\left(  \sigma,\omega\right)  \left\Vert
f\right\Vert _{L^{2}\left(  \sigma\right)  }\left\Vert g\right\Vert
_{L^{2}\left(  \omega\right)  }.
\end{align*}

Now $\mathbf{1}_{F^{c}}f_{F}$ is supported outside $F$, and each $J$ in the
Haar support of $g_{F}=\mathsf{P}_{\mathcal{C}_{\mathcal{F}}\left(  F\right)
}^{\omega}g$ is either in $\mathcal{N}_{r}\left(  F\right)  =\left\{
J\in\mathcal{D}\left[  F\right]  :\ell\left(  J\right)  \geq2^{-r}\ell\left(
F\right)  \right\}  $, in which case the desired bound for term $I$ is
straightforward, or $J$ is $\left(  r,\varepsilon\right)  $-deeply embedded in
$F$, i.e. $J\subset_{r,\varepsilon}F$, and so $J\subset_{r,\varepsilon}W$ for
some $W\in\mathcal{M}_{\left(  r,\varepsilon\right)  -\operatorname*{deep}%
}\left(  F\right)  $. Thus with $\mathcal{C}_{\mathcal{F}}^{\flat}\left(
F\right)  \equiv\mathcal{C}_{\mathcal{F}}\left(  F\right)  \setminus
\mathcal{N}_{r}\left(  F\right)  $ we have%
\begin{align*}
\left\vert II\right\vert  & =\left\vert \sum_{F\in\mathcal{F}}\int
_{\mathbb{R}}H_{\sigma}\left(  \mathbf{1}_{F^{c}}f_{F}\right)  \left(
x\right)  \ \mathsf{P}_{\mathcal{C}_{\mathcal{F}}^{\flat}\left(  F\right)
}^{\omega}g\left(  x\right)  \ d\omega\left(  x\right)  \right\vert \\
& =\left\vert \sum_{F\in\mathcal{F}}\sum_{W\in\mathcal{M}_{\left(
r,\varepsilon\right)  -\operatorname*{deep}}\left(  F\right)  \cap
\mathcal{C}_{\mathcal{F}}\left(  F\right)  }\int_{\mathbb{R}}\mathsf{P}%
_{\mathcal{C}_{\mathcal{F}}^{\flat}\left(  F\right)  \cap\mathcal{D}\left[
W\right]  }^{\omega}H_{\sigma}\left(  \mathbf{1}_{F^{c}}f_{F}\right)  \left(
x\right)  \ \mathsf{P}_{\mathcal{C}_{\mathcal{F}}^{\flat}\left(  F\right)
\cap\mathcal{D}\left[  W\right]  }^{\omega}g\left(  x\right)  \ d\omega\left(
x\right)  \right\vert \\
& \lesssim\sqrt{\sum_{F\in\mathcal{F}}\sum_{W\in\mathcal{M}_{\left(
r,\varepsilon\right)  -\operatorname*{deep}}\left(  F\right)  \cap
\mathcal{C}_{\mathcal{F}}\left(  F\right)  }\int_{\mathbb{R}}\left\vert
\mathsf{P}_{\mathcal{C}_{\mathcal{F}}^{\flat}\left(  F\right)  \cap
\mathcal{D}\left[  W\right]  }^{\omega}H_{\sigma}\left(  \mathbf{1}_{F^{c}%
}f_{F}\right)  \left(  x\right)  \right\vert ^{2}d\omega\left(  x\right)  }\\
& \ \ \ \ \ \ \ \ \ \ \ \ \ \ \ \ \ \ \ \ \ \ \ \ \ \times\sqrt{\sum
_{F\in\mathcal{F}}\sum_{W\in\mathcal{M}_{\left(  r,\varepsilon\right)
-\operatorname*{deep}}\left(  F\right)  \cap\mathcal{C}_{\mathcal{F}}\left(
F\right)  }\int_{\mathbb{R}}\left\vert \mathsf{P}_{\mathcal{C}_{\mathcal{F}%
}^{\flat}\left(  F\right)  \cap\mathcal{D}\left[  W\right]  }^{\omega}g\left(
x\right)  \right\vert ^{2}d\omega\left(  x\right)  }.
\end{align*}
The second factor is bounded by $C\left\Vert g\right\Vert _{L^{2}\left(
\omega\right)  }$, and we use the second line in the Energy Lemma
\ref{Energy Lemma} on the first factor to bound it by,%
\begin{align}
& \sqrt{\sum_{F\in\mathcal{F}}\sum_{W\in\mathcal{M}_{\left(  r,\varepsilon
\right)  -\operatorname*{deep}}\left(  F\right)  \cap\mathcal{C}_{\mathcal{F}%
}\left(  F\right)  }\ \left(  \frac{\mathrm{P}\left(  W,\mathbf{1}_{F^{c}%
}f_{F}\sigma\right)  }{\ell\left(  W\right)  }\right)  ^{2}\int_{W}\left\vert
\mathsf{P}_{\mathcal{C}_{\mathcal{F}}\left(  F\right)  \cap\mathcal{D}\left[
W\right]  }^{\omega}x\right\vert ^{2}d\omega}\label{bound by}\\
& \leq\mathfrak{F}\left(  \sigma,\omega\right)  \lVert\Phi\rVert_{L^{p}\left(
\sigma\right)  }\lesssim\mathfrak{F}\left(  \sigma,\omega\right)  \lVert
f\rVert_{L^{2}\left(  \sigma\right)  },\nonumber
\end{align}
using the definition of the functional energy characteristic $\mathfrak{F}%
\left(  \sigma,\omega\right)  $, and the dyadic $\sigma$-maximal function
inequality $\left\Vert \Phi\right\Vert _{L^{p}\left(  \sigma\right)  }%
\lesssim\left\Vert M_{\sigma}f\right\Vert _{L^{p}\left(  \sigma\right)
}\lesssim\left\Vert f\right\Vert _{L^{p}\left(  \sigma\right)  }$. This
completes the proof of the Intertwining Proposition \ref{strongly adapted'}.
\end{proof}

Thus we have the following control of the $\operatorname{far}$ form,
\[
\left\vert \mathsf{B}_{\operatorname{far}}\left(  f,g\right)  \right\vert
\lesssim\left(  \mathfrak{F}\left(  \sigma,\omega\right)  +\mathfrak{T}%
_{H}\left(  \sigma,\omega\right)  +\mathcal{A}_{2}^{\operatorname{hole}%
}\left(  \sigma,\omega\right)  \right)  \ \left\Vert f\right\Vert
_{L^{2}\left(  \sigma\right)  }\left\Vert g\right\Vert _{L^{2}\left(
\omega\right)  }\ .
\]
Finally then, to control the functional energy characteristic $\mathfrak{F}%
\left(  \sigma,\omega\right)  $, we can of course appeal to the bound in
\cite{Hyt2}\footnote{The case of no common point masses is in \cite{LaSaShUr3}%
.} as a black box, to conclude that,%
\begin{equation}
\left\vert \mathsf{B}_{\operatorname{far}}\left(  f,g\right)  \right\vert
\lesssim\left(  \mathfrak{T}_{H,p}^{\ell^{2}}\left(  \sigma,\omega\right)
+\mathcal{A}_{2}^{\operatorname{hole}}\left(  \sigma,\omega\right)  \right)
\ \left\Vert f\right\Vert _{L^{2}\left(  \sigma\right)  }\left\Vert
g\right\Vert _{L^{2}\left(  \omega\right)  }\ ,\label{B far}%
\end{equation}
thus completing our treatment of the $\operatorname{far}$ form.

But instead, we prefer to use a different proof based on Theorem 1 of
\cite{Saw3}, rather than the Poisson estimate in Theorem 2 of \cite{Saw3} that
was used in \cite{LaSaShUr3} and generalized to handle holes in \cite{Hyt2}.
We will use the following smaller \emph{refined} functional energy inequality
introduced in \cite{SaWi},%
\[
\sum_{F\in\mathcal{F}}\sum_{W\in\mathcal{M}_{\left(  r,\varepsilon\right)
-\operatorname*{deep}}\left(  F\right)  \cap\mathcal{C}_{\mathcal{F}}\left(
F\right)  }\left(  \frac{\mathrm{P}\left(  W,h\mathbf{1}_{F^{c}}\sigma\right)
}{\ell\left(  W\right)  }\right)  ^{2}\int_{W}\left\vert \mathsf{P}%
_{\mathcal{C}_{\mathcal{F}}\left(  F\right)  \cap\mathcal{D}\left[  W\right]
}^{\omega}x\right\vert ^{2}d\omega\leq\mathfrak{F}\left(  \sigma
,\omega\right)  \lVert h\rVert_{L^{2}\left(  \sigma\right)  }\ ,
\]
in which the sum in $W\in\mathcal{M}_{\left(  r,\varepsilon\right)
-\operatorname*{deep}}\left(  F\right)  $ is restricted to the corona
$\mathcal{C}_{\mathcal{F}}\left(  F\right)  $. This smaller functional energy
also serves to control the Intertwining Proposition by an inspection of
(\ref{bound by}) above, and we can now give the simpler proof based on Theorem
1 in \cite{Saw3} instead of on Theorem 2 in \cite{Saw3} as done in all other
treatments of the $\operatorname*{far}$ form.

The interval $F=F_{W}\in\mathcal{F}$ is uniquely determined by the condition
$W\in\mathcal{M}_{\left(  r,\varepsilon\right)  -\operatorname*{deep}}\left(
F_{W}\right)  \cap\mathcal{C}_{\mathcal{F}}\left(  F\right)  $, and so we can
define the connected set of intervals $\mathcal{C}\left(  W\right)
\equiv\mathcal{C}_{\mathcal{F}}\left(  F_{W}\right)  \cap\mathcal{D}\left[
W\right]  $. Then using%
\[
\Lambda\equiv\overset{\cdot}{\bigcup}_{F\in\mathcal{F}}\mathcal{M}_{\left(
r,\varepsilon\right)  -\operatorname*{deep}}\left(  F\right)  \cap
\mathcal{C}_{\mathcal{F}}\left(  F\right)  ,
\]
where $\overset{\cdot}{\bigcup}$ denotes that the sets $\mathcal{M}_{\left(
r,\varepsilon\right)  -\operatorname*{deep}}\left(  F\right)  \cap
\mathcal{C}_{\mathcal{F}}\left(  F\right)  $ are pairwise disjoint, together
with the orthogonality of the projections $\left\{  \mathsf{P}_{\mathcal{C}%
_{\mathcal{F}}\left(  F\right)  \cap\mathcal{D}\left[  W\right]  }^{\omega
}x\right\}  _{\substack{F\in\mathcal{F} \\W\in\mathcal{M}_{\left(
r,\varepsilon\right)  -\operatorname*{deep}}\left(  F\right)  \cap
\mathcal{C}_{\mathcal{F}}\left(  F\right)  }}$ in $L^{2}\left(  \omega\right)
$, we see that this refined functional energy inequality is equivalent to the
boundedness from $L^{2}\left(  \sigma\right)  $ to $L^{2}\left(
\omega\right)  $ of the operator $Th\left(  x\right)  \equiv\int_{\mathbb{R}%
}K\left(  x,y\right)  h\left(  y\right)  d\sigma\left(  y\right)  $ with
kernel,
\begin{equation}
K\left(  x,y\right)  \equiv\sum_{W\in\Lambda}\frac{\mathbf{1}_{R\setminus
W^{\ast}}\left(  y\right)  }{\left(  \ell\left(  W\right)  +\left\vert
y-c_{W}\right\vert \right)  ^{2}}\mathsf{P}_{\mathcal{C}\left(  W\right)
}^{\omega}Z\left(  x\right)  ,\label{def K}%
\end{equation}
where $Z$ denotes the identity map $Z\left(  x\right)  =x$ on $\mathbb{R}$,
and $K^{\ast}$ is the double of $K$.

This is in turn equivalent to boundedness of the operator $T^{\ast}T$ from
$L^{2}\left(  \sigma\right)  $ to $L^{2}\left(  \sigma\right)  $, where the
kernel of $T^{\ast}T$ is given by%
\begin{align*}
& L\left(  y^{\prime},y\right)  =\int_{\mathbb{R}}\int_{\mathbb{R}}K\left(
y^{\prime},x\right)  K\left(  x,y\right)  d\omega\left(  x\right) \\
& =\int_{\mathbb{R}}\left\{  \sum_{W^{\prime}\in\Lambda}\frac{\mathbf{1}%
_{R\setminus W^{\prime\ast}}\left(  y^{\prime}\right)  }{\left(  \ell\left(
W^{\prime}\right)  +\left\vert y^{\prime}-c_{W^{\prime}}\right\vert \right)
^{2}}\mathsf{P}_{\mathcal{C}\left(  W^{\prime}\right)  }^{\omega}Z\left(
x\right)  \right\}  \left\{  \sum_{W\in\Lambda}\frac{\mathbf{1}_{R\setminus
W^{\ast}}\left(  y\right)  }{\left(  \ell\left(  W\right)  +\left\vert
y-c_{W}\right\vert \right)  ^{2}}\mathsf{P}_{\mathcal{C}\left(  W\right)
}^{\omega}Z\left(  x\right)  \right\}  d\omega\left(  x\right) \\
& =\sum_{W^{\prime}\in\Lambda}\frac{\mathbf{1}_{R\setminus W^{\prime\ast}%
}\left(  y^{\prime}\right)  }{\left(  \ell\left(  W^{\prime}\right)
+\left\vert y^{\prime}-c_{W^{\prime}}\right\vert \right)  ^{2}}\left\{
\int_{\mathbb{R}}\mathsf{P}_{\mathcal{C}\left(  W^{\prime}\right)  }^{\omega
}Z\left(  x\right)  \mathsf{P}_{\mathcal{C}\left(  W\right)  }^{\omega
}Z\left(  x\right)  d\omega\left(  x\right)  \right\}  \sum_{W\in\Lambda}%
\frac{\mathbf{1}_{R\setminus W^{\ast}}\left(  y\right)  }{\left(  \ell\left(
W\right)  +\left\vert y-c_{W}\right\vert \right)  ^{2}}\\
& =\sum_{W\in\Lambda}\left\{  \int_{\mathbb{R}}\left\vert \mathsf{P}%
_{\mathcal{C}\left(  W\right)  }^{\omega}Z\left(  x\right)  \right\vert
^{2}d\omega\left(  x\right)  \right\}  \frac{\mathbf{1}_{R\setminus W^{\ast}%
}\left(  y^{\prime}\right)  }{\left(  \ell\left(  W\right)  +\left\vert
y^{\prime}-c_{W}\right\vert \right)  ^{2}}\frac{\mathbf{1}_{R\setminus
W^{\ast}}\left(  y\right)  }{\left(  \ell\left(  W\right)  +\left\vert
y-c_{W}\right\vert \right)  ^{2}},
\end{align*}
where in the final line we have used that $W^{\prime}\neq W$ implies
$\mathcal{C}\left(  W^{\prime}\right)  \cap\mathcal{C}\left(  W\right)
=\emptyset$ and hence that $\mathsf{P}_{\mathcal{C}\left(  W^{\prime}\right)
}^{\omega}Z$ and $\mathsf{P}_{\mathcal{C}\left(  W\right)  }^{\omega}Z$ are
orthogonal in $L^{2}\left(  \omega\right)  $.

Now each of the kernels $J_{W}\left(  y^{\prime},y\right)  \equiv
\frac{\mathbf{1}_{R\setminus W^{\ast}}\left(  y^{\prime}\right)  }{\left(
\ell\left(  W\right)  +\left\vert y^{\prime}-c_{W}\right\vert \right)  ^{2}%
}\frac{\mathbf{1}_{R\setminus W^{\ast}}\left(  y\right)  }{\left(  \ell\left(
W\right)  +\left\vert y-c_{W}\right\vert \right)  ^{2}}$ is symmetric and
satisfies the following hypotheses uniformly in $W$,

\begin{enumerate}
\item $J_{W}\left(  y^{\prime},y\right)  $ is essentially decreasing in the
variable $y$ away from $y^{\prime}$, i.e.%
\[
J_{W}\left(  x,y_{1}\right)  \leq CJ_{W}\left(  x,y_{2}\right)
,\ \ \ \ \ \left\vert y_{1}-x\right\vert \geq\left\vert y_{2}-x\right\vert ,
\]

\item $J_{W}\left(  y^{\prime},y\right)  $ is roughly constant in the variable
$y$ on intervals away from $y^{\prime}$, i.e.%
\[
\frac{1}{C}\leq\frac{J_{W}\left(  y^{\prime},y_{1}\right)  }{J_{W}\left(
y^{\prime},y_{2}\right)  }\leq C,\ \ \ \ \ y_{1},y_{2}\in I\text{ with
}y^{\prime}\notin2I.
\]

\end{enumerate}

Properties (1) and (2) are easy consequences of the formula for the Poisson
kernel, see \cite[(7.10) and (7.11)]{SaWi} for details. Since $L$ is a
positive sum of the kernels $J_{W}$ we see that the symmetic operator $L$
satisfies the two properties (1) and (2) above, i.e. $L$ is essentially
decreasing in the variable $y$ away from $y^{\prime}$, and is roughly constant
in the variable $y$ on intervals away from $y^{\prime}$.

Now as shown in \cite[Section 11, the appendix]{SaWi}, the full testing
condition for $T$ can be controlled as in \cite[Subsections 7.3 and 7.4]{SaWi}
(the case $p=2$ simplifies matters somewhat), i.e.%
\[
\mathfrak{FT}_{T}\left(  \sigma,\omega\right)  \leq C\left[  \mathfrak{T}%
_{H}^{\operatorname{loc}}\left(  \sigma,\omega\right)  +\mathcal{A}%
_{2}^{\operatorname*{hole}}\left(  \sigma,\omega\right)  \right]  .
\]
See the appendix for a detailed proof oof this.

Thus we obtain that the single full testing condition for the operator
$T^{\ast}T$ satisfies,%
\begin{align}
\mathfrak{FT}_{T^{\ast}T}\left(  \sigma,\sigma\right)    & =\sup_{I}%
\frac{\left\Vert T^{\ast}T\mathbf{1}_{I}\right\Vert _{L^{2}\left(
\sigma\right)  }}{\left\Vert \mathbf{1}_{I}\right\Vert _{L^{2}\left(
\sigma\right)  }}=\sup_{I}\sup_{\left\Vert g\right\Vert _{L^{2}\left(
\sigma\right)  }\leq1}\frac{\left\vert \left\langle T^{\ast}T\mathbf{1}%
_{I},g\right\rangle \right\vert }{\left\Vert \mathbf{1}_{I}\right\Vert
_{L^{2}\left(  \sigma\right)  }}\label{single full test}\\
& =\sup_{I}\sup_{\left\Vert g\right\Vert _{L^{2}\left(  \sigma\right)  }\leq
1}\frac{\left\vert \left\langle T\mathbf{1}_{I},Tg\right\rangle \right\vert
}{\left\Vert \mathbf{1}_{I}\right\Vert _{L^{2}\left(  \sigma\right)  }%
}\lesssim\mathfrak{FT}_{T}\left(  \sigma,\omega\right)  \mathfrak{N}%
_{T}\left(  \sigma,\omega\right)  .\nonumber
\end{align}

At this point we recall the two weight norm inequality for potential operators
in \cite[Theorem 1]{Saw3}, an appeal to a straightforward generalization of
this theorem to \textbf{non}convolution operators $L$ satisfying the two
properties (1) and (2) above, i.e.%
\[
\mathfrak{N}_{L}\left(  \sigma,\omega\right)  =\left\Vert L\right\Vert
_{L^{p}\left(  \sigma\right)  \rightarrow L^{p}\left(  \omega\right)  }%
\approx\mathfrak{FT}_{L}\left(  \sigma,\omega\right)  +\mathfrak{FT}_{L^{\ast
}}\left(  \omega,\sigma\right)  ,
\]
where $\mathfrak{FT}_{L}\left(  \sigma,\omega\right)  $ and $\mathfrak{FT}%
_{L^{\ast}}\left(  \omega,\sigma\right)  $ denote the full testing constants,%
\[
\mathfrak{FT}_{L}\left(  \sigma,\omega\right)  \equiv\sup_{I}\frac{\left\Vert
L\mathbf{1}_{I}\right\Vert _{L^{p}\left(  \omega\right)  }}{\left\Vert
\mathbf{1}_{I}\right\Vert _{L^{p}\left(  \sigma\right)  }}\text{ and
}\mathfrak{FT}_{L^{\ast}}\left(  \omega,\sigma\right)  \equiv\sup_{I}%
\frac{\left\Vert L^{\ast}\mathbf{1}_{I}\right\Vert _{L^{p^{\prime}}\left(
\sigma\right)  }}{\left\Vert \mathbf{1}_{I}\right\Vert _{L^{p^{\prime}}\left(
\omega\right)  }}.
\]
The equal weights case of this generalization shows that $L=T^{\ast}T$ is
bounded from $L^{2}\left(  \sigma\right)  $ to $L^{2}\left(  \sigma\right)
$\footnote{We only need the one weight version of this generalization.} with
norm%
\[
\mathfrak{N}_{T^{\ast}T}\left(  \sigma,\sigma\right)  \approx\mathfrak{FT}%
_{T^{\ast}T}\left(  \sigma,\sigma\right)  \leq\mathfrak{FT}_{T}\left(
\sigma,\omega\right)  \mathfrak{N}_{T}\left(  \sigma,\omega\right)  ,
\]
where the final inequality is (\ref{single full test}). However, Hilbert space
theory shows that $\mathfrak{N}_{T^{\ast}T}\left(  \sigma,\omega\right)
=\mathfrak{N}_{T}\left(  \sigma,\omega\right)  ^{2}$, and so we conclude that
$\mathfrak{N}_{T}\left(  \sigma,\omega\right)  \leq C\mathfrak{FT}_{T}\left(
\sigma,\omega\right)  $. Thus $T$ is bounded from $L^{2}\left(  \sigma\right)
$ to $L^{2}\left(  \omega\right)  $ with norm at most
\[
C\mathfrak{FT}_{T}\left(  \sigma,\omega\right)  \leq C\left[  \mathfrak{T}%
_{H}^{\operatorname{loc}}\left(  \sigma,\omega\right)  +\mathcal{A}%
_{2}^{\operatorname*{hole}}\left(  \sigma,\omega\right)  \right]  ,
\]
which was shown above to imply that the refined functional energy inequality
is controlled by the testing condition for the Hilbert transform with weight
pair $\left(  \sigma,\omega\right)  $, and the one-tailed Muckenhoupt
characteristic with holes for the weight pair $\left(  \sigma,\omega\right)
$. This completes our alternate proof of control of the $\operatorname*{far}$
form in (\ref{B far}).

\section{Reduction of the diagonal form by the NTV reach}

We first apply the\ clever `NTV reach' of \cite{NTV3}, which splits the
diagonal form%
\[
\mathsf{B}_{\operatorname{diag}}\left(  f,g\right)  =\sum_{\left(  I,J\right)
\in\mathcal{P}_{\operatorname{diag}}}\left\langle H_{\sigma}\left(
\mathbf{1}_{I_{J}}\bigtriangleup_{I}^{\sigma}f\right)  ,\bigtriangleup
_{J}^{\omega}g\right\rangle _{\omega}=\sum_{F\in\mathcal{F}}\sum
_{\substack{\left(  I,J\right)  \in\mathcal{C}_{\mathcal{F}}\left(  F\right)
\times\mathcal{C}_{\mathcal{F}}\left(  F\right)  \\J\subset_{\tau}I}}\left(
E_{I_{J}}^{\sigma}\bigtriangleup_{I}^{\sigma}f\right)  \left\langle H_{\sigma
}\mathbf{1}_{I_{J}}f,\bigtriangleup_{J}^{\omega}g\right\rangle _{\omega},
\]
into a paraproduct and stopping form,%
\begin{align*}
\mathsf{B}_{\operatorname{diag}}\left(  f,g\right)   & =\sum_{F\in\mathcal{F}%
}\sum_{\substack{\left(  I,J\right)  \in\mathcal{C}_{\mathcal{F}}\left(
F\right)  \times\mathcal{C}_{\mathcal{F}}\left(  F\right)  \\J\subset_{\tau}%
I}}\left(  E_{I_{J}}^{\sigma}\bigtriangleup_{I}^{\sigma}f\right)  \left\langle
H_{\sigma}\mathbf{1}_{F},\bigtriangleup_{J}^{\omega}g\right\rangle _{\omega}\\
& +\sum_{F\in\mathcal{F}}\sum_{\substack{\left(  I,J\right)  \in
\mathcal{C}_{\mathcal{F}}\left(  F\right)  \times\mathcal{C}_{\mathcal{F}%
}\left(  F\right)  \\J\subset_{\tau}I}}\left(  E_{I_{J}}^{\sigma
}\bigtriangleup_{I}^{\sigma}f\right)  \left\langle H_{\sigma}\mathbf{1}%
_{F\setminus I_{J}},\bigtriangleup_{J}^{\omega}g\right\rangle _{\omega}\\
& \equiv\sum_{F\in\mathcal{F}}\mathsf{B}_{\operatorname*{para}}^{F}\left(
f,g\right)  +\sum_{F\in\mathcal{F}}\mathsf{B}_{\operatorname*{stop}}%
^{F}\left(  f,g\right) \\
& \equiv\mathsf{B}_{\operatorname*{para}}\left(  f,g\right)  +\mathsf{B}%
_{\operatorname*{stop}}\left(  f,g\right)  .
\end{align*}

\subsection{Control of the paraproduct form by the testing characteristic}

We control the local paraproduct form $\mathsf{B}_{\operatorname*{para}}%
^{F}\left(  f,g\right)  $ by the testing condition for $H$. Indeed, from the
telescoping identity for Haar projections, see e.g. \cite{LaSaUr2}, we have%
\begin{align*}
& \mathsf{B}_{\operatorname*{para}}^{F}\left(  f,g\right)  =\sum
_{\substack{I\in\mathcal{C}_{\mathcal{F}}\left(  F\right)  \text{ and }%
J\in\mathcal{C}_{\mathcal{F}}\left(  F\right)  \\J\subset I\text{ and }%
\ell\left(  J\right)  \leq2^{-\mathbf{r}}\ell\left(  I\right)  }}\left(
E_{I_{J}}^{\sigma}\bigtriangleup_{I}^{\sigma}f\right)  \left\langle H_{\sigma
}\mathbf{1}_{F},\bigtriangleup_{J}^{\omega}g\right\rangle _{\omega}\\
& =\sum_{J\in\mathcal{C}_{\mathcal{F}}\left(  F\right)  }\left\langle
H_{\sigma}\mathbf{1}_{F},\bigtriangleup_{J}^{\omega}g\right\rangle _{\omega
}\left\{  \sum_{I\in\mathcal{C}_{F}\text{:\ }J\subset I\text{ and }\ell\left(
J\right)  \leq2^{-r}\ell\left(  I\right)  }E_{I_{J}}^{\sigma}\bigtriangleup
_{I}^{\sigma}f\right\} \\
& =\sum_{J\in\mathcal{C}_{\mathcal{F}}\left(  F\right)  }\left\langle
H_{\sigma}\mathbf{1}_{F},\bigtriangleup_{J}^{\omega}g\right\rangle _{\omega
}\left\{  E_{I^{\natural}\left(  J\right)  _{J}}^{\sigma}f-E_{F}^{\sigma
}f\right\}  =\left\langle H_{\sigma}\mathbf{1}_{F},\sum_{J\in\mathcal{C}%
_{\mathcal{F}}\left(  F\right)  }\left\{  E_{I^{\natural}\left(  J\right)
_{J}}^{\sigma}f-E_{F}^{\sigma}f\right\}  \bigtriangleup_{J}^{\omega
}g\right\rangle _{\omega}\ ,
\end{align*}
where $I^{\natural}\left(  J\right)  $ is the smallest $I\in\mathcal{C}%
_{\mathcal{F}}\left(  F\right)  \cap\Lambda_{f}^{\sigma}$ (where $\Lambda
_{f}^{\sigma}$ is the Haar support of $f)$, that contains $J$ and satisfies
$J\subset_{r}I$.

Thus from Cauchy-Schwarz and the bound on the coefficients $\lambda_{J}\equiv
E_{I^{\natural}\left(  J\right)  _{J}}^{\sigma}f-E_{F}^{\sigma}f$ given by
$\left\vert \lambda_{J}\right\vert \lesssim\alpha_{\mathcal{F}}\left(
F\right)  $, we have%
\begin{align}
& \left\vert \mathsf{B}_{\operatorname*{para}}^{F}\left(  f,g\right)
\right\vert =\left\vert \left\langle H_{\sigma}\mathbf{1}_{F},\sum
_{J\in\mathcal{C}_{\mathcal{F}}\left(  F\right)  }\left\{  E_{I^{\natural
}\left(  J\right)  _{J}}^{\sigma}f-E_{F}^{\sigma}f\right\}  \bigtriangleup
_{J}^{\omega}g\right\rangle _{\omega}\right\vert \label{est para}\\
& \leq\left\Vert \mathbf{1}_{F}H_{\sigma}\mathbf{1}_{F}\right\Vert
_{L^{2}\left(  \omega\right)  }\left\Vert \sum_{J\in\mathcal{C}_{\mathcal{F}%
}\left(  F\right)  }\lambda_{J}\bigtriangleup_{J}^{\omega}g\right\Vert
_{L^{2}\left(  \omega\right)  }\lesssim\alpha_{\mathcal{F}}\left(  F\right)
\ \left\Vert \mathbf{1}_{F}H_{\sigma}\mathbf{1}_{F}\right\Vert _{L^{2}\left(
\omega\right)  }\ \left\Vert \sum_{J\in\mathcal{C}_{\mathcal{F}}\left(
F\right)  }\bigtriangleup_{J}^{\omega}g\right\Vert _{L^{2}\left(
\omega\right)  }\nonumber\\
& \leq\mathfrak{T}_{H}\left(  \sigma,\omega\right)  \ \alpha_{\mathcal{F}%
}\left(  F\right)  \ \sqrt{\left\vert F\right\vert _{\sigma}}\ \left\Vert
\mathsf{P}_{\mathcal{C}_{\mathcal{F}}\left(  F\right)  }^{\omega}g\right\Vert
_{L^{2}\left(  \omega\right)  }.\nonumber
\end{align}
Then summing in $F$ we obtain the following bound for the paraproduct form
$\mathsf{B}_{\operatorname*{para}}\left(  f,g\right)  \equiv\sum
_{F\in\mathcal{F}}\mathsf{B}_{\operatorname*{para}}^{F}\left(  f,g\right)  $,%
\begin{align*}
\left\vert \mathsf{B}_{\operatorname*{para}}\left(  f,g\right)  \right\vert  &
\leq\sum_{F\in\mathcal{F}}\left\vert \mathsf{B}_{\operatorname*{para}}%
^{F}\left(  f,g\right)  \right\vert \leq\sum_{F\in\mathcal{F}}\mathfrak{T}%
_{H}\left(  \sigma,\omega\right)  \ \alpha_{\mathcal{F}}\left(  F\right)
\ \sqrt{\left\vert F\right\vert _{\sigma}}\ \left\Vert \mathsf{P}%
_{\mathcal{C}_{\mathcal{F}}\left(  F\right)  }^{\omega}g\right\Vert
_{L^{2}\left(  \omega\right)  }\\
& \leq\mathfrak{T}_{H}\left(  \sigma,\omega\right)  \sqrt{\sum_{F\in
\mathcal{F}}\alpha_{\mathcal{F}}\left(  F\right)  ^{2}\ \left\vert
F\right\vert _{\sigma}}\sqrt{\sum_{F\in\mathcal{F}}\left\Vert \mathsf{P}%
_{\mathcal{C}_{\mathcal{F}}\left(  F\right)  }^{\omega}g\right\Vert
_{L^{2}\left(  \omega\right)  }^{2}}\\
& \lesssim\mathfrak{T}_{H}\left(  \sigma,\omega\right)  \left\Vert
f\right\Vert _{L^{2}(\sigma)}\left\Vert g\right\Vert _{L^{2}(\omega)}.
\end{align*}
by quasi-orthogonality in (\ref{qorth}) and orthogonality in Haar projections
$\mathsf{P}_{\mathcal{C}_{\mathcal{F}}\left(  F\right)  }^{\omega}g$.

\subsection{Control of the stopping form by the Poisson-Energy characteristic}

To control the stopping form, it suffices to assume the Haar supports of $f$
and $g$ are contained in a fixed large, but finite subset $\digamma$ of the
grid $\mathcal{D}$, and to uniformly control over these subsets, each local
stopping form%
\[
\mathsf{B}_{\operatorname*{stop}}^{F}\left(  f,g\right)  \equiv\sum
_{\substack{I\in\mathcal{C}_{F}\text{ and }J\in\mathcal{C}_{\mathcal{F}%
}\left(  F\right)  \\J\subset I\text{ and }\ell\left(  J\right)  \leq2^{-\tau
}\ell\left(  I\right)  }}\left(  E_{I_{J}}^{\sigma}\bigtriangleup_{I}^{\sigma
}f\right)  \left\langle H_{\sigma}\left(  \mathbf{1}_{F\setminus I_{J}%
}\right)  ,\bigtriangleup_{J}^{\omega}g\right\rangle _{\omega}.
\]
Indeed, then quasi-orthogonality in (\ref{qorth}) and orthogonality in Haar
projections will be used to finish control of the stopping form. In fact we
will prove%
\begin{equation}
\left\vert \mathsf{B}_{\operatorname*{stop}}^{F}\left(  f,g\right)
\right\vert \leq C\mathrm{P}\mathsf{E}_{\mathcal{F}}\left(  \sigma
,\omega\right)  \left\Vert f\right\Vert _{L^{2}(\sigma)}\left\Vert
g\right\Vert _{L^{2}(\omega)}\ ,\ \ \ \ \ \text{for all }F\in\mathcal{F}%
,\label{stop est}%
\end{equation}
where $\mathrm{P}\mathsf{E}_{\mathcal{F}}\left(  \sigma,\omega\right)  $ is
defined in (\ref{PE char}). The key technical estimate needed is the following
Stopping Child Lemma, which is a reformulation of the `straddling' lemmas in
M. Lacey \cite[Lemmas 3.19 and 3.16]{Lac}.

\subsubsection{The Stopping Child Lemma}

Let$\,\mathcal{F}$ be a collection of $\operatorname{good}$\ stopping times.
Suppose $F\in\mathcal{F}$ is fixed for the moment and let $\mathcal{A}%
\subset\mathcal{C}_{\mathcal{F}}\left(  F\right)  $ be a collection of
$\operatorname{good}$ stopping times with top interval $F$. For $A\in
\mathcal{A}$ we define the `stopping child' bilinear form,%
\begin{align*}
\mathsf{B}_{\mathcal{A}}^{A}\left(  f,g\right)   & \equiv\sum_{S\in
\mathfrak{C}_{\mathcal{A}}\left(  A\right)  }\sum_{\substack{\left(
I,J\right)  \in\left(  S,A\right]  \times\mathcal{D}\left[  S\right]
\\J\subset_{\tau}I}}\left(  E_{I_{J}}^{\sigma}\bigtriangleup_{I}^{\sigma
}f\right)  \left\langle H_{\sigma}\left(  \mathbf{1}_{F\setminus S}\right)
,\bigtriangleup_{J}^{\omega}g\right\rangle _{\omega}\\
& =\sum_{S\in\mathfrak{C}_{\mathcal{A}}\left(  A\right)  }\sum_{J\in
\mathcal{D}\left[  S\right]  :\ J\subset_{\tau}\Lambda_{f}^{\sigma}\left[
S\right]  }\left\langle H_{\sigma}\varphi_{J}^{S},\bigtriangleup_{J}^{\omega
}g\right\rangle _{\omega}\ ,
\end{align*}
where $\left(  I,J\right]  \equiv\left\{  K\in\mathcal{D}:J\subset
K\subsetneqq I\right\}  $ denotes the tower in the grid $\mathcal{D}$ with
endpoints $I$ (not included) and $J$ (included), and $\Lambda_{f}^{\sigma
}\left[  S\right]  $ is the smallest interval in the Haar support $\Lambda
_{f}^{\sigma}$ of $f$ that contains $S$, and $\varphi_{J}^{S}\equiv\sum
_{I\in\left(  S,A\right]  :\ J\subset_{\tau}\Lambda_{f}^{\sigma}\left[
S\right]  }\left(  E_{I_{J}}^{\sigma}\bigtriangleup_{I}^{\sigma}f\right)
\mathbf{1}_{A_{0}\setminus S}$. The presence of the indicator $\mathbf{1}%
_{F\setminus S}$ suggests the name `stopping child' bilinear form. We also
define a refined Poisson-Energy characteristic by,%
\begin{equation}
\mathrm{P}\mathsf{E}_{\mathcal{A}}^{A,\operatorname*{trip}}\left(  \Lambda
_{g}^{\omega};\sigma,\omega\right)  \equiv\sup_{S\in\mathfrak{C}_{\mathcal{A}%
}\left(  A\right)  }\sup_{K\in\mathcal{W}_{\operatorname{good}}%
^{F,\operatorname*{trip}}\left(  S\right)  }\frac{1}{\sqrt{\left\vert
K\right\vert _{\sigma}}}\frac{\mathrm{P}\left(  K,\mathbf{1}_{F\setminus
S}\sigma\right)  }{\ell\left(  K\right)  }\sqrt{\sum_{J\in\Lambda_{g}^{\omega
}\cap\mathcal{D}\left[  K\right]  }\left\Vert \bigtriangleup_{J}^{\omega
}x\right\Vert _{L^{2}\left(  \omega\right)  }^{2}}\ ,\label{refined PE char}%
\end{equation}
where $\Lambda_{g}^{\omega}\equiv\left\{  J\in\mathcal{D}:\bigtriangleup
_{J}^{\omega}g\neq0\right\}  $ is the Haar support of $g$ in $L^{2}\left(
\omega\right)  $, and%
\[
\mathcal{W}_{\operatorname{good}}^{F,\operatorname*{trip}}\left(  S\right)
\equiv\left[  \left\{  S\right\}  \cup\mathcal{M}_{\operatorname{good}%
}^{\operatorname*{trip}}\left(  S\right)  \right]  \cap\mathcal{C}%
_{\mathcal{F}}\left(  F\right)  ,
\]
where $\mathcal{M}_{\operatorname{good}}^{\operatorname*{trip}}\left(
S\right)  $ is the collection of maximal $\operatorname{good}$ subintervals
$I$ of $S$ whose \emph{triples} are contained in $S$. This characteristic
depends on $\mathcal{A}$, $A$ and $\Lambda_{g}^{\omega}$, as well as on
$\sigma$ and $\omega$, and is a variation on the crucial size function
introduced by M. Lacey in \cite{Lac}. Set
\[
\mathrm{P}\mathsf{E}_{\mathcal{A}\left[  F\right]  }^{\operatorname*{trip}%
}\left(  \Lambda_{g}^{\omega};\sigma,\omega\right)  \equiv\sup_{A\in
\mathcal{A}\left[  F\right]  }\mathrm{P}\mathsf{E}_{\mathcal{A}}%
^{A,\operatorname*{trip}}\left(  \Lambda_{g}^{\omega};\sigma,\omega\right)  ,
\]
and note the trivial inequality
\begin{equation}
\mathrm{P}\mathsf{E}_{\mathcal{A}\left[  F\right]  }^{\operatorname*{trip}%
}\left(  \Lambda_{g}^{\omega};\sigma,\omega\right)  \leq\mathrm{P}%
\mathsf{E}_{\mathcal{F}}^{F}\left(  \sigma,\omega\right)  \leq\mathrm{P}%
\mathsf{E}_{\mathcal{F}}\left(  \sigma,\omega\right)  ,\label{triv inequ}%
\end{equation}
where $\mathrm{P}\mathsf{E}_{\mathcal{F}}^{F}\left(  \sigma,\omega\right)  $
and $\mathrm{P}\mathsf{E}_{\mathcal{F}}\left(  \sigma,\omega\right)  $ are
defined in (\ref{PE char}).

\begin{lemma}
[Stopping Child Lemma]\label{straddle 3}(a reformulation of \cite[Lemmas 3.19
and 3.26]{Lac}) Let $f\in L^{2}\left(  \sigma\right)  $, $g\in L^{2}\left(
\omega\right)  $ have $\operatorname{good}$ Haar supports, along with their
children, let $\mathcal{F}$ be a collection of $\operatorname{good}$\ stopping
times, and let $\mathcal{A}\subset\mathcal{C}_{\mathcal{F}}\left(  F\right)  $
be a collection of $\operatorname{good}$ stopping times with top $F$. Set
\[
\alpha_{A}\left(  S\right)  \equiv\sup_{I\in\left(  \Lambda_{f}^{\sigma}
\left[  S\right]  ,A\right]  \cap\mathcal{D}_{\operatorname{good}}}\left\vert
E_{I}^{\sigma}f\right\vert ,\ \ \ \ \ \text{for }S\in\mathfrak{C}%
_{\mathcal{A}}\left(  A\right)  ,\ A\in\mathcal{A}.
\]
Then for all $A\in\mathcal{A}$ we have the nonlinear bound,
\[
\left\vert \mathsf{B}_{\mathcal{A}}^{A}\left(  f,g\right)  \right\vert \leq
C\mathrm{P}\mathsf{E}_{\mathcal{A}}^{A,\operatorname*{trip}}\left(
\Lambda_{g}^{\omega};\sigma,\omega\right)  \sqrt{\sum_{S\in\mathfrak{C}%
_{\mathcal{A}}\left(  A\right)  }\left\vert S\right\vert _{\sigma}\alpha
_{A}\left(  S\right)  ^{2}}\left\Vert g\right\Vert _{L^{2}\left(
\omega\right)  }.
\]

\end{lemma}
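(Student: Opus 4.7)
The plan is to apply the Energy Lemma term-by-term, reorganize the resulting sum by the good triple-Whitney decomposition $\mathcal{W}_{\func{good}}^{F,\limfunc{trip}}(S)$, and then extract the refined Poisson-Energy characteristic (\ref{refined PE char}) by two successive applications of Cauchy-Schwarz. First, I would note that for each $J$ in the range of summation, the sum defining $\varphi_{J}^{S}$ telescopes to a difference of $\sigma$-averages of $f$ on intervals in the tower $(S,A]$, so $\|\varphi_{J}^{S}\|_{L^{\infty}} \lesssim \alpha_{A}(S)$ directly from the definition of $\alpha_{A}$. Also, $\varphi_{J}^{S}$ is supported in $A \setminus S \subset F \setminus S$ (since $A \in \mathcal{A} \subset \mathcal{C}_{\mathcal{F}}(F)$), so the first inequality in Lemma \ref{Energy Lemma} yields
\[
\left|\langle H_{\sigma}\varphi_{J}^{S}, \bigtriangleup_{J}^{\omega}g\rangle_{\omega}\right| \lesssim \alpha_{A}(S)\,\frac{\mathrm{P}(J,\mathbf{1}_{F\setminus S}\sigma)}{\ell(J)}\,\|\bigtriangleup_{J}^{\omega}x\|_{L^{2}(\omega)}\,\|\bigtriangleup_{J}^{\omega}g\|_{L^{2}(\omega)}.
\]

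Next comes the straddling step. For each $S \in \mathfrak{C}_{\mathcal{A}}(A)$ I would assign to every good $J \in \Lambda_{g}^{\omega} \cap \mathcal{D}[S]$ arising in the sum a unique $K = K(J) \in \mathcal{W}_{\func{good}}^{F,\limfunc{trip}}(S)$ with $J \subset K$: either $J$ is contained in some maximal triple-good subinterval of $S$, or the goodness of $J$ forces $\ell(J)$ to be comparable to $\ell(S)$ (because a good $J$ of scale much smaller than $\ell(S)$ is automatically $(r,\varepsilon)$-deep in $S$, hence has $3J \subset S$), in which case we assign $K = S$. In either case, since $3K \subset S$ (with constants absorbed when $K = S$), for $y \in F \setminus S$ one has $|y - c_{J}| \asymp |y - c_{K}| \gtrsim \ell(K)$, which gives the pointwise comparability
\[
\frac{\mathrm{P}(J,\mathbf{1}_{F\setminus S}\sigma)}{\ell(J)} \asymp \frac{\mathrm{P}(K,\mathbf{1}_{F\setminus S}\sigma)}{\ell(K)}.
\]
Fixing $K$ and applying Cauchy-Schwarz over $J \in \Lambda_{g}^{\omega} \cap \mathcal{D}[K]$, the factor involving $\bigtriangleup_{J}^{\omega}x$ combines with $\mathrm{P}(K,\mathbf{1}_{F\setminus S}\sigma)/\ell(K)$ and $1/\sqrt{|K|_{\sigma}}$ to give at most $\mathrm{P}\mathsf{E}_{\mathcal{A}}^{A,\limfunc{trip}}(\Lambda_{g}^{\omega};\sigma,\omega)$ by the definition (\ref{refined PE char}), leaving the bound
\[
\lesssim \alpha_{A}(S)\,\mathrm{P}\mathsf{E}_{\mathcal{A}}^{A,\limfunc{trip}}\,\sqrt{|K|_{\sigma}}\,\Bigl(\sum_{J \in \Lambda_{g}^{\omega} \cap \mathcal{D}[K]}\|\bigtriangleup_{J}^{\omega}g\|_{L^{2}(\omega)}^{2}\Bigr)^{1/2}.
\]

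Finally, I would sum over $K \in \mathcal{W}_{\func{good}}^{F,\limfunc{trip}}(S)$ by Cauchy-Schwarz, using pairwise disjointness of the Whitney intervals inside $S$ to obtain $\sum_{K}|K|_{\sigma} \leq |S|_{\sigma}$ and orthogonality of Haar projections to collect the $g$-energies into $\|\mathsf{P}_{\mathcal{D}[S]}^{\omega}g\|_{L^{2}(\omega)}^{2}$; then a second Cauchy-Schwarz over the pairwise disjoint $S \in \mathfrak{C}_{\mathcal{A}}(A)$, using $L^{2}(\omega)$-orthogonality of the Haar projections on disjoint $S$'s, yields
\[
|\mathsf{B}_{\mathcal{A}}^{A}(f,g)| \lesssim \mathrm{P}\mathsf{E}_{\mathcal{A}}^{A,\limfunc{trip}}\,\sqrt{\sum_{S \in \mathfrak{C}_{\mathcal{A}}(A)}|S|_{\sigma}\,\alpha_{A}(S)^{2}}\,\|g\|_{L^{2}(\omega)}.
\]
The main obstacle is the straddling step: one must verify that the assignment $J \mapsto K(J)$ covers every $J$ in the sum and that $3K \subset S$ genuinely produces the two-sided pointwise comparison of Poisson ratios uniformly in $y \in F \setminus S$. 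This is precisely where the triple-goodness in the definition of $\mathcal{W}_{\func{good}}^{F,\limfunc{trip}}(S)$ becomes essential --- a plain $(r,\varepsilon)$-Whitney collection would not provide the buffer needed to compare $\mathrm{P}(J,\cdot)/\ell(J)$ and $\mathrm{P}(K,\cdot)/\ell(K)$ --- and constitutes Lacey's key insight in \cite{Lac} which is being repackaged here.
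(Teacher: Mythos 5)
Your proposal is correct and follows essentially the same line as the paper's proof: telescoping to bound $\left\vert\varphi_J^S\right\vert \lesssim \alpha_A(S)\,\mathbf{1}_{F\setminus S}$, applying the Monotonicity/Energy Lemma term-by-term, using the triple-Whitney decomposition $\mathcal{W}_{\func{good}}^{F,\limfunc{trip}}(S)$ to compare Poisson ratios $\mathrm{P}(J,\cdot)/\ell(J) \lesssim \mathrm{P}(K,\cdot)/\ell(K)$, and then extracting $\mathrm{P}\mathsf{E}_{\mathcal{A}}^{A,\limfunc{trip}}$ by three successive applications of Cauchy-Schwarz (over $J\subset K$, over $K$, and over $S$) combined with disjointness $\sum_K \left\vert K\right\vert_\sigma \lesssim \left\vert S\right\vert_\sigma$ and Haar orthogonality. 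The only cosmetic difference is that you fold the ``$\tau$-nearby'' intervals into the assignment $K(J)=S$ rather than treating them as a separate (trivially bounded) form as the paper does, and you misstate the support of $\varphi_J^S$ as $A\setminus S$ when it is in fact $F\setminus S$ --- but you immediately use $\mathbf{1}_{F\setminus S}$ in the Poisson integral, so no harm results.
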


\begin{proof}
By the telescoping property of martingale differences, together with the bound
$\alpha_{A}\left(  S\right)  $ on the averages of $\mathsf{P}_{\mathcal{C}%
_{\mathcal{A}}\left(  A\right)  }^{\sigma}f$ in the $\operatorname{good}$
intervals in the tower $\left(  \Lambda_{f}^{\sigma}\left[  S\right]
,A\right]  $, and the goodness assumption on $f$, we have%
\begin{equation}
\left\vert \varphi_{J}^{S}\right\vert =\left\vert \sum_{I\in\left(
\Lambda_{f}^{\sigma}\left[  S\right]  ,A\right]  :\ J\subset_{\tau}I}\left(
E_{I_{J}}^{\sigma}\bigtriangleup_{I}^{\sigma}f\right)  \mathbf{1}%
_{A_{0}\setminus I_{J}}\right\vert \lesssim\alpha_{A}\left(  S\right)
\mathbf{1}_{F\setminus S}\ .\label{bfi 3}%
\end{equation}
From the Monotonicity Lemma and the fact that%
\[
\Lambda_{g}^{\omega}\cap\mathcal{D}\left[  S\right]  \subset\left(
\bigcup_{K\in\mathcal{W}_{\operatorname{good}}^{F}\left(  S\right)  }K\right)
\cup\mathcal{N}_{\tau}\left(  S\right)
\]
where $\mathcal{N}_{\tau}\left(  S\right)  \equiv\left\{  J\subset
S:\ell\left(  J\right)  \geq2^{-\tau}\ell\left(  S\right)  \right\}  $ is the
set of `$\tau$-nearby' dyadic intervals in $S$, we have%
\begin{align*}
& \left\vert \mathsf{B}_{\mathcal{A}}^{A}\left(  f,g\right)  \right\vert
\leq\left\vert \sum_{S\in\mathfrak{C}_{\mathcal{A}}\left(  A\right)  }%
\sum_{J\in\mathcal{D}\left[  S\right]  :\ J\subset_{\tau}\Lambda_{f}^{\sigma
}\left[  S\right]  }\left\langle H_{\sigma}\varphi_{J}^{S},\bigtriangleup
_{J}^{\omega}g\right\rangle _{\omega}\right\vert \\
& \leq\sum_{S\in\mathfrak{C}_{\mathcal{A}}\left(  A\right)  }\sum_{J\in
\Lambda_{g}^{\omega}:\ J\subset_{\tau}\Lambda_{f}^{\sigma}\left[  S\right]
}\frac{\mathrm{P}\left(  J,\left\vert \varphi_{J}^{S}\right\vert
\sigma\right)  }{\ell\left(  J\right)  }\left\Vert \bigtriangleup_{J}^{\omega
}x\right\Vert _{L^{2}\left(  \omega\right)  }\left\Vert \bigtriangleup
_{J}^{\omega}g\right\Vert _{L^{2}\left(  \omega\right)  }\\
& \leq\sum_{S\in\mathfrak{C}_{\mathcal{A}}\left(  A\right)  }\alpha_{A}\left(
S\right)  \sum_{K\in\mathcal{W}_{\operatorname{good}}^{F,\operatorname*{trip}%
}\left(  S\right)  }\sum_{\substack{J\in\Lambda_{g}^{\omega}:\ J\subset_{\tau
}\Lambda_{f}^{\sigma}\left[  S\right]  \\J\subset K}}\frac{\mathrm{P}\left(
J,\mathbf{1}_{F\setminus S}\sigma\right)  }{\ell\left(  J\right)  }\left\Vert
\bigtriangleup_{J}^{\omega}x\right\Vert _{L^{2}\left(  \omega\right)
}\left\Vert \bigtriangleup_{J}^{\omega}g\right\Vert _{L^{2}\left(
\omega\right)  }\\
& +\sum_{S\in\mathfrak{C}_{\mathcal{A}}\left(  A\right)  }\alpha_{A}\left(
S\right)  \sum_{J\in\Lambda_{g}^{\omega}\cap\mathcal{N}_{\tau}\left(
S\right)  :\ J\subset_{\tau}\Lambda_{f}^{\sigma}\left[  S\right]  }%
\frac{\mathrm{P}\left(  J,\mathbf{1}_{F\setminus S}\sigma\right)  }%
{\ell\left(  J\right)  }\left\Vert \bigtriangleup_{J}^{\omega}x\right\Vert
_{L^{2}\left(  \omega\right)  }\left\Vert \bigtriangleup_{J}^{\omega
}g\right\Vert _{L^{2}\left(  \omega\right)  }\\
& \equiv\left\vert \mathsf{B}_{\mathcal{A}}^{A}\right\vert
_{\operatorname{straddle}}^{\operatorname*{trip}}\left(  f,g\right)
+\left\vert \mathsf{B}_{\mathcal{A}}^{A}\right\vert _{\operatorname{straddle}%
}^{\operatorname{near}}\left(  f,g\right)  .
\end{align*}
Since%
\[
\frac{\mathrm{P}\left(  J,\mathbf{1}_{F\setminus S}\sigma\right)  }%
{\ell\left(  J\right)  }\lesssim\frac{\mathrm{P}\left(  K,\mathbf{1}%
_{F\setminus S}\sigma\right)  }{\ell\left(  K\right)  },
\]
for $K\in\mathcal{W}_{\operatorname{good}}^{F,\operatorname*{trip}}\left(
S\right)  \cup\mathcal{N}_{\tau}\left(  S\right)  $, we have with
$\widehat{\mathsf{P}}_{S;K}^{\omega}\equiv\sum_{\substack{J\in\Lambda
_{g}^{\omega}:\ J\subset_{\tau}\Lambda_{f}^{\sigma}\left[  S\right]
\\J\subset W}}\bigtriangleup_{J}^{\omega}$,
\begin{align*}
& \left\vert \mathsf{B}_{\mathcal{A}}^{A}\right\vert _{\operatorname{straddle}%
}^{\operatorname*{trip}}\left(  f,g\right)  \leq\sum_{S\in\mathfrak{C}%
_{\mathcal{A}}\left(  A\right)  }\alpha_{A}\left(  S\right)  \sum
_{K\in\mathcal{W}_{\operatorname{good}}^{F}\left(  S\right)  }\sum
_{\substack{J\in\Lambda_{g}^{\omega}:\ J\subset_{\tau}\Lambda_{f}^{\sigma
}\left[  S\right]  \\J\subset K}}\frac{\mathrm{P}\left(  K,\mathbf{1}%
_{F\setminus S}\sigma\right)  }{\ell\left(  K\right)  }\left\Vert
\bigtriangleup_{J}^{\omega}x\right\Vert _{L^{2}\left(  \omega\right)
}\left\Vert \bigtriangleup_{J}^{\omega}g\right\Vert _{L^{2}\left(
\omega\right)  }\\
& \leq\sum_{S\in\mathfrak{C}_{\mathcal{A}}\left(  A\right)  }\alpha_{A}\left(
S\right)  \sum_{K\in\mathcal{W}_{\operatorname{good}}^{F}\left(  S\right)
}\frac{\mathrm{P}\left(  K,\mathbf{1}_{F\setminus S}\sigma\right)  }%
{\ell\left(  K\right)  }\left\Vert \widehat{\mathsf{P}}_{S;K}^{\omega
}x\right\Vert _{L^{2}\left(  \omega\right)  }\left\Vert \widehat{\mathsf{P}%
}_{S;K}^{\omega}g\right\Vert _{L^{2}\left(  \omega\right)  }\\
& \leq C\mathrm{P}\mathsf{E}_{\mathcal{A}}^{A,\operatorname*{trip}}\left(
\Lambda_{g}^{\omega};\sigma,\omega\right)  \sum_{S\in\mathfrak{C}%
_{\mathcal{A}}\left(  A\right)  }\alpha_{A}\left(  S\right)  \sum
_{K\in\mathcal{W}_{\operatorname{good}}^{F}\left(  S\right)  }\sqrt{\left\vert
K\right\vert _{\sigma}}\left\Vert \widehat{\mathsf{P}}_{S;K}^{\omega
}g\right\Vert _{L^{2}\left(  \omega\right)  }\\
& \leq C\mathrm{P}\mathsf{E}_{\mathcal{A}}^{A,\operatorname*{trip}}\left(
\Lambda_{g}^{\omega};\sigma,\omega\right)  \sum_{S\in\mathfrak{C}%
_{\mathcal{A}}\left(  A\right)  }\alpha_{A}\left(  S\right)  \sqrt{\sum
_{K\in\mathcal{W}_{\operatorname{good}}^{F}\left(  S\right)  }\left\vert
K\right\vert _{\sigma}}\sqrt{\sum_{K\in\mathcal{W}_{\operatorname{good}}%
^{F}\left(  S\right)  }\left\Vert \widehat{\mathsf{P}}_{S;K}^{\omega
}g\right\Vert _{L^{2}\left(  \omega\right)  }^{2}}\\
& \leq C\mathrm{P}\mathsf{E}_{\mathcal{A}}^{A,\operatorname*{trip}}\left(
\Lambda_{g}^{\omega};\sigma,\omega\right)  \sqrt{\sum_{S\in\mathfrak{C}%
_{\mathcal{A}}\left(  A\right)  }\left\vert S\right\vert _{\sigma}\alpha
_{A}\left(  S\right)  ^{2}}\left\Vert g\right\Vert _{L^{2}\left(
\omega\right)  }.
\end{align*}

The corresponding bound for the form $\left\vert \mathsf{B}_{\mathcal{A}}%
^{A}\right\vert _{\operatorname{straddle}}^{\operatorname{near}}\left(
f,g\right)  $ is similar, but easier since there are at most $2^{\tau+1}$
intervals $J$ in $\Lambda_{g}^{\omega}\cap\mathcal{N}_{\tau}\left(  S\right)
$, and this completes the proof of the Stopping Child Lemma.
\end{proof}

\subsubsection{Dual tree decomposition}

To control the local stopping forms $\mathsf{B}_{\operatorname*{stop}}%
^{F}\left(  f,g\right)  $, we need to introduce further corona decompositions
within each corona $\mathcal{C}_{\mathcal{F}}\left(  F\right)  $. These
coronas will be associated to stopping intervals $\mathcal{U}\left[  F\right]
\subset\mathcal{C}_{\mathcal{F}}\left(  F\right)  $, whose construction uses a
dual tree decomposition originating with M. Lacey in \cite{Lac}. For the sake
of generality, we present the decomposition in the setting of trees that are
not necessarily dyadic.

\begin{definition}
Let $\mathcal{T}$ be a tree with root $o$.

\begin{enumerate}
\item Let $P\left(  \alpha\right)  \equiv\left\{  \beta\in\mathcal{T}%
:\beta\succeq\alpha\right\}  $ and $S\left(  \alpha\right)  \equiv\left\{
\beta\in\mathcal{T}:\beta\preceq\alpha\right\}  $ denote the predessor and
successor sets of $\alpha\in\mathcal{T}$.

\item A \emph{geodesic} $\mathfrak{g}$ is a maximal linearly ordered subset of
$\mathcal{T}$. A finite geodesic $\mathfrak{g}$ is an interval $\mathfrak{g}%
=\left[  \alpha,\beta\right]  =P\left(  \beta\right)  \setminus S\left(
\alpha\right)  $, and an infinite geodesic is an interval $\mathfrak{g}%
=\mathfrak{g}\setminus P\left(  \alpha\right)  $ for some $\alpha
\in\mathfrak{g}$.

\item A \emph{stopping time}\footnote{This definition of stopping time used in
the theory of trees is a slight variant of what has been used above, but
should cause no confusion.} $T$ for a tree $\mathcal{T}$ is a subset
$T\subset\mathcal{T}$ such that
\[
S\left(  \beta\right)  \cap S\left(  \beta^{\prime}\right)  =\emptyset\text{
for all }\beta,\beta^{\prime}\in T\text{ with }\beta\neq\beta^{\prime}.
\]

\item A sequence $\left\{  T_{n}\right\}  _{n=0}^{N}$ of stopping times
$T_{n}$ is \emph{decreasing} if, for every $\beta\in T_{n+1}$ with $0\leq
n<N$, there is $\beta^{\prime}\in T_{n}$ such that $S\left(  \beta\right)
\subset S\left(  \beta^{\prime}\right)  $. We think of such a sequence as
getting further from the root as $n$ increases.

\item For $T$ a stopping time in $\mathcal{T}$ and $\alpha\in\mathcal{T}$, we
define
\[
\left[  T,\alpha\right)  \equiv\bigcup_{\beta\in T}\left[  \beta
,\alpha\right)  ,
\]
where the interval $\left[  \beta,\alpha\right)  =\emptyset$ unless
$\beta\prec\alpha$. In the case $\left[  T,\alpha\right)  =\emptyset$, we
write $\alpha\preceq T$, and in the case $\left[  T,\alpha\right)
\not =\emptyset$, we write $\alpha\succ T$. The set $\left[  T,\alpha\right)
$ can be thought of as the set of points in the tree $\mathcal{T}$ that `lie
between' $T$ and $\alpha$ but are strictly less than $\alpha$.

\item For any $\alpha\in\mathcal{T}$, we define the set of its \emph{children}
$\mathfrak{C}_{T}\left(  \alpha\right)  $ to consist of the \emph{maximal}
elements $\beta\in\mathcal{T}$ such that $\beta\prec\alpha$.
\end{enumerate}
\end{definition}

We define the dual integration operator $I^{\ast}$ on $\mathcal{T}$ by
$I^{\ast}\mu\left(  \alpha\right)  \equiv\sum_{\beta\in\mathcal{T}%
:\ \beta\preccurlyeq\alpha}\mu\left(  \beta\right)  $. Here is the dual
stopping time lemma that abstracts that of M. Lacey in \cite{Lac}.

\begin{lemma}
\label{lem I*}Let $\mathcal{T}$ be a tree with root $o$, and suppose there is
a uniform bound on the number of children of any tree element. Suppose further
that $\mu:\mathcal{T}\rightarrow\left[  0,\infty\right)  $ is nontrivial with
finite support, and let $T_{0}$ be the stopping time consisting of the minimal
tree elements in the support of $\mu$. Fix $\Gamma>1$. If there is no element
$\alpha\in\mathcal{T}$ with $I^{\ast}\mu\left(  \alpha\right)  >\Gamma
\sum_{\beta\in\mathcal{T}:\ \beta\prec\alpha}I^{\ast}\mu\left(  \beta\right)
$, we say that $\mu$ is $\Gamma$-irreducible. Otherwise, there is a unique
increasing sequence $\left\{  T_{n}\right\}  _{n=0}^{N+1}$, with
$T_{N+1}=\left\{  o\right\}  $, of stopping times $T_{n}$ such that for all
$n\in\mathbb{N}$ with $n\leq N$,
\begin{align}
I^{\ast}\mu\left(  \alpha\right)   & >\Gamma\sum_{\beta\in T_{n-1}%
:\ \beta\prec\alpha}I^{\ast}\mu\left(  \beta\right)  ,\ \ \ \ \ \text{for all
}\alpha\in T_{n}\ ;\label{dec corona}\\
I^{\ast}\mu\left(  \gamma\right)   & \leq\Gamma\sum_{\beta\in T_{n-1}%
:\ \beta\prec\gamma}I^{\ast}\mu\left(  \beta\right)  ,\text{ \ \ \ \ for all
}\gamma\in\left[  \alpha,T_{n-1}\right)  \text{ with }\alpha\in T_{n}%
\ ;\nonumber\\
I^{\ast}\mu\left(  o\right)   & \leq\Gamma\sum_{\beta\in T_{N}:\ \beta
\prec\gamma}I^{\ast}\mu\left(  \beta\right)  \ .\nonumber
\end{align}
Moreover, for $1\leq n\leq N+1$, this unique sequence $\left\{  T_{n}\right\}
_{n=0}^{N+1}$ satisfies%
\begin{align}
\left\vert \left(  T_{n-1},\alpha\right)  \right\vert _{\mu}  & \leq\left(
\Gamma-1\right)  \sum_{\beta\in T_{n-1}:\ \beta\prec\alpha}I^{\ast}\mu\left(
\beta\right)  ,\ \ \ \ \ \text{for all }\alpha\in T_{n}%
\ ,\label{dec corona small}\\
\left\vert \left(  T_{n-1},\gamma\right]  \right\vert _{\mu}  & \leq\left(
\Gamma-1\right)  \sum_{\beta\in T_{n-1}:\ \beta\prec\gamma}I^{\ast}\mu\left(
\beta\right)  ,\ \ \ \ \ \text{for all }\gamma\in S\left(  \alpha\right)
\setminus\left\{  \alpha\right\}  \setminus\bigcup_{\delta\in T_{n-1}}S\left(
\delta\right)  \ .\nonumber
\end{align}

\end{lemma}

\begin{proof}
If $T_{n}$ is already defined, let $T_{n+1}$ consist of all minimal points
$\alpha\in\mathcal{T}$ satisfying $I^{\ast}\mu\left(  \alpha\right)
>\Gamma\sum_{\beta\in T_{n}:\ \beta\prec\alpha}I^{\ast}\mu\left(
\beta\right)  $, provided at least one such point $\alpha$ exists. If not then
set $N=n$ and define $T_{N+1}\equiv\left\{  o\right\}  $. It is easy to see
that $\left\{  T_{n}\right\}  _{n=0}^{N+1}$ is an increasing sequence of
stopping times that satisfies (\ref{dec corona}), and is unique with these
properties. Moreover (\ref{dec corona small}) also holds since for $\alpha\in
T_{n}$ we have%
\begin{align*}
\left\vert \left(  T_{n-1},\alpha\right)  \right\vert _{\mu}  & =\sum
_{\gamma\in\mathfrak{C}_{\mathcal{T}}\left(  \alpha\right)  }\left\vert
\left(  T_{n-1},\gamma\right)  \right\vert _{\mu}=\sum_{\gamma\in
\mathfrak{C}_{\mathcal{T}}\left(  \alpha\right)  }\left(  I^{\ast}\mu\left(
\gamma\right)  -\sum_{\beta\in T_{n-1}:\ \beta\preccurlyeq\gamma}I^{\ast}%
\mu\left(  \beta\right)  \right) \\
& \leq\sum_{\gamma\in\mathfrak{C}_{\mathcal{T}}\left(  \alpha\right)  }\left(
\Gamma\sum_{\beta\in T_{n-1}:\ \beta\prec\gamma}I^{\ast}\mu\left(
\beta\right)  -\sum_{\beta\in T_{n-1}:\ \beta\preccurlyeq\gamma}I^{\ast}%
\mu\left(  \beta\right)  \right) \\
& \leq\left(  \Gamma-1\right)  \sum_{\gamma\in\mathfrak{C}_{\mathcal{T}%
}\left(  \alpha\right)  }\sum_{\beta\in T_{n-1}:\ \beta\prec\gamma}I^{\ast}%
\mu\left(  \beta\right)  =\left(  \Gamma-1\right)  \sum_{\beta\in
T_{n-1}:\ \beta\prec\alpha}I^{\ast}\mu\left(  \beta\right)  ,
\end{align*}
and the same argument proves the second line in (\ref{dec corona small}),
since $\gamma\in S\left(  \alpha\right)  \setminus\left\{  \alpha\right\}
\setminus\bigcup_{\delta\in T_{n-1}}S\left(  \delta\right)  $ was \emph{not}
chosen by the stopping criterion in the first line of (\ref{dec corona}), and
hence%
\[
I^{\ast}\mu\left(  \gamma\right)  \leq\Gamma\sum_{\beta\in T_{n-1}%
:\ \beta\prec\gamma}I^{\ast}\mu\left(  \beta\right)  .
\]

\end{proof}

\subsubsection{Completion of the proof}

Momentarily fix $\theta>0$ in Lemma \ref{lem I*}, which will be chosen at the
very end of the proof. For $F\in\mathcal{F}$, we consider the dyadic tree
$\mathcal{T}\equiv\mathcal{D}$, and let $\mathcal{U}\left[  F\right]
=\mathcal{U}_{\Lambda_{g}^{\omega}}\left[  F\right]  $ denote the collection
of intervals constructed in Lemma \ref{lem I*} with $o=F$, $\Gamma=1+\theta$
and $\mu_{g}:\mathcal{D}\rightarrow\left[  0,\infty\right)  $ defined
by$\ \mu_{g}\left(  J\right)  =\left\{
\begin{array}
[c]{ccc}%
\left\Vert \bigtriangleup_{J}^{\omega}x\right\Vert _{L^{2}\left(
\omega\right)  }^{2} & \text{ for } & J\in\Lambda_{g}^{\omega}\\
0 & \text{ for } & J\notin\Lambda_{g}^{\omega}%
\end{array}
\right.  $ for $J\in\mathcal{D}$. If $\mu_{g}$ is irreducible, the ensuing
arguments are greatly simplified, as the reader can easily check. Note that
$\mathcal{U}\left[  F\right]  $ depends only only the $\omega$-Haar support
$\Lambda_{g}^{\omega}$ of $g$, and not on $g$ itself. We will sometimes write
just $\mathcal{U}\left[  F\right]  $ when $\Lambda_{g}^{\omega}$ is
understood. Define $\operatorname{MIN}\left(  \Lambda_{g}^{\omega}\right)  $
to be the minimal intervals $J\in\mathcal{C}_{\mathcal{F}}%
^{\operatorname{good}}\left(  F\right)  $ for which $\bigtriangleup
_{J}^{\omega}x\neq0$. From (\ref{dec corona small}), we have tight control of
the $\operatorname{good}$ $\omega$-projections of $g$ in the coronas,%
\begin{equation}
\left\Vert \mathsf{P}_{\left[  \mathcal{D}^{\operatorname*{no}%
\operatorname{top}}\left(  K\right)  \setminus\bigcup_{S\in\mathfrak{C}%
_{\mathcal{U}\left[  F\right]  }\left[  U\right]  }\mathcal{D}\left(
S\right)  \right]  \cap\Lambda_{g}^{\omega}}^{\omega,\operatorname{good}%
}x\right\Vert _{L^{2}\left(  \omega\right)  }^{2}\leq\theta\left\Vert
\mathsf{P}_{\mathcal{D}\left(  K\right)  \cap\Lambda_{g}^{\omega}}%
^{\omega,\operatorname{good}}x\right\Vert _{L^{2}\left(  \omega\right)  }%
^{2}\ ,\label{tight}%
\end{equation}
for $K\in\mathcal{C}_{\mathcal{U}\left[  F\right]  }^{\operatorname{good}%
}\left(  U\right)  \setminus\operatorname{MIN}\left(  \Lambda_{g}^{\omega
}\right)  $, where $\mathcal{D}^{\operatorname*{no}\operatorname{top}}\left(
U\right)  \equiv\mathcal{D}\left(  U\right)  \setminus\left\{  U\right\}  $,
and the projections are restricted to $\operatorname{good}$ intervals. We also
have geometric decay in grandchildren from iterating the first line in
(\ref{dec corona}),%
\begin{equation}
\sum_{U^{\prime}\in\mathfrak{C}_{\mathcal{U}\left[  F\right]  }^{\left(
m\right)  }\left[  U\right]  }\left\Vert \mathsf{P}_{\mathcal{C}%
_{\mathcal{U}\left[  F\right]  }\left(  U^{\prime}\right)  \cap\Lambda
_{g}^{\omega}}^{\omega,\operatorname{good}}x\right\Vert _{L^{2}\left(
\omega\right)  }^{2}\leq\frac{1}{\Gamma^{m}}\left\Vert \mathsf{P}%
_{\mathcal{D}\left(  U\right)  \cap\Lambda_{g}^{\omega}}^{\omega
,\operatorname{good}}x\right\Vert _{L^{2}\left(  \omega\right)  }%
^{2}\ ,\ \ \ \ \ U\in\mathcal{U}\left[  F\right]  .\label{geo dec}%
\end{equation}

\begin{definition}
Fix $f\in L^{2}\left(  \sigma\right)  $ and let $\mathcal{F}$ be the stopping
times constructed using the criterion (\ref{energy stop crit}), that depends
only on $f$, $\sigma$ and $\omega$. For $g\in L^{2}\left(  \omega\right)  $
let $\mathcal{U}\left[  F\right]  =\mathcal{U}_{\Lambda_{g}^{\omega}}\left[
F\right]  ,F\in\mathcal{F}$ be as constructed above, depending only on $g$ and
$\omega$. Then for $F\in\mathcal{F}$, define the form,
\[
\mathsf{B}_{\operatorname*{stop}}^{F}\left(  f,g\right)  \equiv\sum
_{\substack{I\in\mathcal{C}_{F}\text{ and }J\in\mathcal{C}_{\mathcal{F}%
}\left(  F\right)  \\J\subset I\text{ and }\ell\left(  J\right)  \leq2^{-\tau
}\ell\left(  I\right)  }}\left(  E_{I_{J}}^{\sigma}\bigtriangleup_{I}^{\sigma
}f\right)  \left\langle H_{\sigma}\left(  \mathbf{1}_{F\setminus I_{J}%
}\right)  ,\bigtriangleup_{J}^{\omega}g\right\rangle _{\omega}\ .
\]

\end{definition}

We will prove the bound (\ref{stop est}) for $f$ and $g$. Denote the
collection of pairs $\left(  I,J\right)  $ arising in the sum defining
$\mathsf{B}_{\operatorname*{stop}}^{F}\left(  f,g\right)  $ by $\mathcal{P}%
^{F}$, so that%
\[
\mathsf{B}_{\operatorname*{stop}}^{F}\left(  f,g\right)  =\sum_{\left(
I,J\right)  \in\mathcal{P}^{F}}\left(  E_{I_{J}}^{\sigma}\bigtriangleup
_{I}^{\sigma}f\right)  \left\langle H_{\sigma}\left(  \mathbf{1}_{F\setminus
I_{J}}\right)  ,\bigtriangleup_{J}^{\omega}g\right\rangle _{\omega}\ .
\]
We use the corona decomposition associated with the stopping times
$\mathcal{U}$, namely%
\begin{align*}
\mathcal{P}^{F}  & =\bigcup_{U,V\in\mathcal{U}:\ V\subset U}\mathcal{C}%
_{\mathcal{U}}\left(  U\right)  \times\mathcal{C}_{\mathcal{U}}\left(
V\right) \\
& =\left\{  \bigcup_{U\in\mathcal{U}}\mathcal{C}_{\mathcal{U}}\left(
U\right)  \times\mathcal{C}_{\mathcal{U}}\left(  U\right)  \right\}
\bigcup\left\{  \bigcup_{U,VG\in\mathcal{U}:\ V\subsetneqq U}\mathcal{C}%
_{\mathcal{U}}\left(  U\right)  \times\mathcal{C}_{\mathcal{U}}\left(
V\right)  \right\} \\
& \equiv\mathcal{P}_{\operatorname{diag}}^{^{F}}\bigcup\mathcal{P}%
_{\operatorname{far}\operatorname{below}}^{^{F}}\ ,
\end{align*}
to obtain the decomposition of the stopping form $\mathsf{B}%
_{\operatorname*{stop}}^{F}\left(  f,g\right)  $ into `diagonal' and `far'
stopping forms,%
\begin{align*}
\mathsf{B}_{\operatorname*{stop}}^{F}\left(  f,g\right)   & =\mathsf{B}%
_{\operatorname{diag}\operatorname*{stop}}^{F}\left(  f,g\right)
+\mathsf{B}_{\operatorname{far}\operatorname*{stop}}^{F}\left(  f,g\right)
,\\
\mathsf{B}_{\operatorname{diag}\operatorname*{stop}}^{F}\left(  f,g\right)   &
\equiv\sum_{\left(  I,J\right)  \in\mathcal{P}_{\operatorname{diag}}^{^{F}}%
}\left(  E_{I_{J}}^{\sigma}\bigtriangleup_{I}^{\sigma}f\right)  \left\langle
H_{\sigma}\left(  \mathbf{1}_{F\setminus I_{J}}\right)  ,\bigtriangleup
_{J}^{\omega}g\right\rangle _{\omega}\ ,\\
\mathsf{B}_{\operatorname{far}\operatorname*{stop}}^{F}\left(  f,g\right)   &
\equiv\sum_{\left(  I,J\right)  \in\mathcal{P}_{\operatorname{far}%
\operatorname{below}}^{^{F}}}\left(  E_{I_{J}}^{\sigma}\bigtriangleup
_{I}^{\sigma}f\right)  \left\langle H_{\sigma}\left(  \mathbf{1}_{F\setminus
I_{J}}\right)  ,\bigtriangleup_{J}^{\omega}g\right\rangle _{\omega}\ ,
\end{align*}
where%
\begin{align*}
\mathsf{B}_{\operatorname{far}\operatorname*{stop}}^{F}\left(  f,g\right)   &
=\sum_{U,V\in\mathcal{U}:\ V\subsetneqq U}\sum_{I\in\mathcal{C}_{\mathcal{U}%
}\left(  U\right)  }\left(  E_{I_{J}}^{\sigma}\bigtriangleup_{I}^{\sigma
}f\right)  \left\langle H_{\sigma}\left(  \mathbf{1}_{F\setminus I_{J}%
}\right)  ,\sum_{\substack{J\in\mathcal{C}_{\mathcal{U}}\left(  V\right)
\\J\subset I\text{ and }\ell\left(  J\right)  \leq2^{-\tau}\ell\left(
I\right)  }}\bigtriangleup_{J}^{\omega}g\right\rangle _{\omega}\\
& =\sum_{t=1}^{\infty}\sum_{U\in\mathcal{U}}\sum_{V\in\mathfrak{C}%
_{\mathcal{U}}^{\left(  t\right)  }\left(  U\right)  }\sum_{I\in
\mathcal{C}_{\mathcal{U}}\left(  U\right)  }\left(  E_{I_{J}}^{\sigma
}\bigtriangleup_{I}^{\sigma}\mathsf{P}_{\mathcal{C}_{\mathcal{U}}\left(
U\right)  }^{\sigma}f\right)  \left\langle H_{\sigma}\left(  \mathbf{1}%
_{F\setminus I_{J}}\right)  ,\mathsf{P}_{\mathcal{C}_{\mathcal{U}}\left(
V\right)  }^{\omega}g\right\rangle _{\omega}.
\end{align*}

We will control this far stopping form $\mathsf{B}_{\operatorname{far}%
\operatorname*{stop}}^{F}\left(  f,g\right)  $ using the Stopping Child Lemma,
in which we will derive geometric decay from (\ref{geo dec}) and the
Poisson-Energy characteristic $\mathrm{P}\mathsf{E}_{\mathcal{A}%
}^{A,\operatorname*{trip}}\left(  \Lambda_{g_{t}}^{\omega};\sigma
,\omega\right)  $ with%
\begin{align*}
& A_{0}\text{ replaced by }F,\ \ \ \mathcal{A}\text{ replaced by }%
\mathcal{U}_{\Lambda_{g}^{\omega}}\left[  F\right]  \text{, \ \ }A\text{
replaced by }U,\\
& \text{and where }g_{t}\equiv\mathsf{P}_{\bigcup_{G\in\mathfrak{C}%
_{\mathcal{U}}^{\left(  t\right)  }\left(  U\right)  }\mathcal{C}%
_{\mathcal{U}}\left(  G\right)  }^{\omega}g.
\end{align*}
To reduce notational clutter, we write $\mathcal{U}_{g}$ instead of
$\mathcal{U}_{\Lambda_{g}^{\omega}}$, even though it depends only on the
$\omega$-Haar support of $g$. Indeed, we then have for $U\in\mathcal{U}_{g}$
and $t\geq1$,%
\begin{align*}
& \left\vert \sum_{V\in\mathfrak{C}_{\mathcal{U}_{g}}^{\left(  t\right)
}\left(  U\right)  }\sum_{I\in\mathcal{C}_{\mathcal{U}_{g}}\left(  U\right)
}\left(  E_{I_{J}}^{\sigma}\bigtriangleup_{I}^{\sigma}\mathsf{P}%
_{\mathcal{C}_{\mathcal{U}_{g}}\left(  U\right)  }^{\sigma}f\right)
\left\langle H_{\sigma}\left(  \mathbf{1}_{F\setminus I_{J}}\right)
,\mathsf{P}_{\mathcal{C}_{\mathcal{U}_{g}}\left(  V\right)  }^{\omega
}g\right\rangle _{\omega}\right\vert \\
& \leq C\mathrm{P}\mathsf{E}_{\mathcal{U}_{g}}^{U,\operatorname*{trip}}\left(
\Lambda_{g_{t}}^{\omega};\sigma,\omega\right)  \sqrt{\sum_{S\in\mathfrak{C}%
_{\mathcal{U}_{g}}\left(  U\right)  }\left\vert S\right\vert _{\sigma}%
\alpha_{F}\left(  S\right)  ^{2}}\left\Vert g_{t}\right\Vert _{L^{2}\left(
\omega\right)  },
\end{align*}
where%
\[
\alpha_{U}\left(  S\right)  \equiv\sup_{I\in\left(  \Lambda_{\mathsf{P}%
_{\mathcal{C}_{\mathcal{U}_{g}}\left(  U\right)  }^{\sigma}f}^{\sigma}\left[
S\right]  ,F\right]  \cap\mathcal{D}_{\operatorname{good}}}\left\vert
E_{I}^{\sigma}\mathsf{P}_{\mathcal{C}_{\mathcal{U}_{g}}\left(  U\right)
}^{\sigma}f\right\vert =\sup_{I\in\left(  S,U\right]  \cap\mathcal{D}%
_{\operatorname{good}}}\left\vert E_{I}^{\sigma}\mathsf{P}_{\mathcal{C}%
_{\mathcal{U}_{g}}\left(  U\right)  }^{\sigma}f\right\vert \leq E_{S}^{\sigma
}\left(  M_{\sigma}\mathsf{P}_{\mathcal{C}_{\mathcal{U}_{g}}\left(  U\right)
}^{\sigma}f\right)  ,
\]
and $g_{t}\equiv\sum_{V\in\mathfrak{C}_{\mathcal{U}_{g}}^{\left(  t\right)
}\left(  U\right)  }\mathsf{P}_{\mathcal{C}_{\mathcal{U}_{g}}\left(  V\right)
}^{\omega}g\equiv\mathsf{P}_{\mathcal{C}_{\mathcal{U}_{g}}^{\left(  t\right)
}\left(  U\right)  }^{\omega}g$, and where%
\begin{align*}
& \mathrm{P}\mathsf{E}_{\mathcal{U}_{g}}^{U,\operatorname*{trip}}\left(
\Lambda_{g_{t}}^{\omega};\sigma,\omega\right)  =\sup_{S\in\mathfrak{C}%
_{\mathcal{U}_{g}}\left(  U\right)  }\sup_{K\in\mathcal{W}%
_{\operatorname{good}}^{\operatorname*{trip}}\left(  S\right)  }\frac{1}%
{\sqrt{\left\vert K\right\vert _{\sigma}}}\frac{\mathrm{P}\left(
K,\mathbf{1}_{F\setminus S}\sigma\right)  }{\ell\left(  K\right)  }\sqrt
{\sum_{J\in\Lambda_{g}^{\omega}\cap\bigcup_{V\in\mathfrak{C}_{\mathcal{U}_{g}%
}^{\left(  t\right)  }\left(  U\right)  :G\subset K}\mathcal{C}_{\mathcal{U}%
_{g}}\left(  V\right)  }\left\Vert \bigtriangleup_{J}^{\omega}x\right\Vert
_{L^{2}\left(  \omega\right)  }^{2}}\\
& \leq\sup_{S\in\mathfrak{C}_{\mathcal{U}_{g}}\left(  U\right)  }\sup
_{K\in\mathcal{W}_{\operatorname{good}}^{\operatorname*{trip}}\left(
S\right)  }\frac{1}{\sqrt{\left\vert K\right\vert _{\sigma}}}\frac
{\mathrm{P}\left(  K,\mathbf{1}_{F\setminus S}\sigma\right)  }{\ell\left(
K\right)  }\sqrt{\left(  1-\varepsilon\right)  ^{t}\sum_{J\in\Lambda
_{g}^{\omega}\cap\mathcal{D}\left[  K\right]  }\left\Vert \bigtriangleup
_{J}^{\omega}x\right\Vert _{L^{2}\left(  \omega\right)  }^{2}}\leq\left(
1-\varepsilon\right)  ^{\frac{t}{2}}\mathrm{P}\mathsf{E}_{\mathcal{U}%
_{g}\left[  F\right]  }^{U,\operatorname*{trip}}\left(  \Lambda_{g}^{\omega
};\sigma,\omega\right)  ,
\end{align*}
i.e. we gain geometrically in $t$ when passing from the Haar support of
$g_{t}$ to that of $g$.

We now introduce the size functional of Lacey \cite{Lac}, denoted here by%
\[
\mathrm{P}\mathsf{E}_{\mathcal{F}}^{F}\left(  \Lambda;\sigma,\omega\right)
\equiv\sup_{K\in\mathcal{C}_{\mathcal{F}}^{\operatorname{good}}\left(
F\right)  }\frac{1}{\sqrt{\left\vert K\right\vert _{\sigma}}}\frac
{\mathrm{P}\left(  K,\mathbf{1}_{F\setminus K}\sigma\right)  }{\ell\left(
K\right)  }\sqrt{\sum_{J\in\Lambda\cap\mathcal{D}\left[  K\right]  }\left\Vert
\bigtriangleup_{J}^{\omega}x\right\Vert _{L^{2}\left(  \omega\right)  }^{2}}.
\]
Recalling the\ previous characteristics from (\ref{PE char}) and
(\ref{refined PE char}), we have%
\begin{equation}
\mathrm{P}\mathsf{E}_{\mathcal{A}\left[  F\right]  }^{\operatorname*{trip}%
}\left(  \Lambda;\sigma,\omega\right)  \leq\mathrm{P}\mathsf{E}_{\mathcal{F}%
}^{F}\left(  \Lambda;\sigma,\omega\right)  \leq\mathrm{P}\mathsf{E}%
_{\mathcal{F}}^{F}\left(  \sigma,\omega\right)  \leq\mathrm{P}\mathsf{E}%
_{\mathcal{F}}\left(  \sigma,\omega\right)  .\label{note}%
\end{equation}
Thus we have from the Stopping Child Lemma applied to each $U\in
\mathcal{U}_{g}$,%
\begin{align}
& \ \ \ \ \ \ \ \ \ \ \ \ \ \ \ \left\vert \mathsf{B}_{\operatorname{far}%
\operatorname*{stop}}^{F}\left(  f,g\right)  \right\vert \leq\sum
_{t=1}^{\infty}\sum_{U\in\mathcal{U}_{g}}C\mathrm{P}\mathsf{E}_{\mathcal{U}%
_{g}\left[  F\right]  }^{U,\operatorname*{trip}}\left(  \Lambda_{g_{t}%
}^{\omega};\sigma,\omega\right)  \sqrt{\sum_{S\in\mathfrak{C}_{\mathcal{U}%
_{g}}\left(  U\right)  }\left\vert S\right\vert _{\sigma}E_{S}^{\sigma}\left(
M_{\sigma}\mathsf{P}_{\mathcal{C}_{\mathcal{U}_{g}}\left(  U\right)  }%
^{\sigma}f\right)  ^{2}}\left\Vert g_{t}\right\Vert _{L^{2}\left(
\omega\right)  }\label{far below stop}\\
& \leq\sum_{t=1}^{\infty}\sum_{U\in\mathcal{U}_{g}}C\left(  1-\varepsilon
\right)  ^{\frac{t}{2}}\mathrm{P}\mathsf{E}_{\mathcal{U}_{g}\left[  F\right]
}^{U,\operatorname*{trip}}\left(  \Lambda_{g}^{\omega};\sigma,\omega\right)
\sqrt{\sum_{S\in\mathfrak{C}_{\mathcal{U}_{g}}\left(  U\right)  }\left\vert
S\right\vert _{\sigma}\left(  M_{\sigma}\mathsf{P}_{\mathcal{C}_{\mathcal{U}%
_{g}}\left(  U\right)  }^{\sigma}f\right)  ^{2}}\left\Vert \sum_{V\in
\mathfrak{C}_{\mathcal{U}_{g}}^{\left(  t\right)  }\left(  F\right)
}\mathsf{P}_{\mathcal{C}_{\mathcal{U}_{g}}\left(  V\right)  }^{\omega
}g\right\Vert _{L^{2}\left(  \omega\right)  }\nonumber\\
& \leq C\mathrm{P}\mathsf{E}_{\mathcal{U}_{g}\left[  F\right]  }%
^{\operatorname*{trip}}\left(  \Lambda_{g}^{\omega};\sigma,\omega\right)
\sqrt{\sum_{U\in\mathcal{U}_{g}}\sum_{S\in\mathfrak{C}_{\mathcal{U}_{g}%
}\left(  U\right)  }\left\vert S\right\vert _{\sigma}\left(  M_{\sigma
}\mathsf{P}_{\mathcal{C}_{\mathcal{U}_{g}}\left(  U\right)  }^{\sigma
}f\right)  ^{2}}\left\Vert \sum_{U\in\mathcal{U}_{g}}\sum_{V\in\mathfrak{C}%
_{\mathcal{U}_{g}}^{\left(  t\right)  }\left(  U\right)  }\mathsf{P}%
_{\mathcal{C}_{\mathcal{U}_{g}}\left(  V\right)  }^{\omega}g\right\Vert
_{L^{2}\left(  \omega\right)  }\nonumber\\
& \leq C\mathrm{P}\mathsf{E}_{\mathcal{U}_{g}\left[  F\right]  }%
^{\operatorname*{trip}}\left(  \Lambda_{g}^{\omega};\sigma,\omega\right)
\left\Vert f\right\Vert _{L^{2}\left(  \sigma\right)  }\left\Vert g\right\Vert
_{L^{2}\left(  \omega\right)  },\nonumber
\end{align}
by Cauchy-Schwarz, boundedness of the dyadic maximal function $M_{\sigma}$ on
$L^{2}\left(  \sigma\right)  $, and orthogonality in both $f$ and $g$.

Finally it remains to control the diagonal stopping form $\mathsf{B}%
_{\operatorname{diag}\operatorname*{stop}}^{F}\left(  f,g\right)  $, where%
\[
\mathsf{B}_{\operatorname{diag}\operatorname*{stop}}^{F}\left(  f,g\right)
=\sum_{U\in\mathcal{U}_{g}}\sum_{I\in\mathcal{C}_{\mathcal{U}_{g}}\left(
U\right)  }\left(  E_{I_{J}}^{\sigma}\bigtriangleup_{I}^{\sigma}%
\mathsf{P}_{\mathcal{C}_{\mathcal{U}_{g}}\left(  U\right)  }^{\sigma}f\right)
\left\langle H_{\sigma}\left(  \mathbf{1}_{F\setminus I_{J}}\right)
,\sum_{J\in\mathcal{C}_{\mathcal{U}_{g}}\left(  U\right)  :\ J\subset I\text{
and }\ell\left(  J\right)  \leq2^{-\tau}\ell\left(  I\right)  }\bigtriangleup
_{J}^{\omega}g\right\rangle _{\omega}.
\]
Note that the intervals $J$ arising in $\mathsf{B}_{\operatorname{diag}%
\operatorname*{stop}}^{F}\left(  f,g\right)  $ actually lie in $\mathcal{C}%
_{\mathcal{U}_{g}}^{\operatorname*{no}\operatorname{top}}\equiv\mathcal{C}%
_{\mathcal{U}_{g}}\setminus\left\{  U\right\}  $ by the requirements that
$\ell\left(  J\right)  \leq2^{-\tau}\ell\left(  I\right)  $ and $I\in
\mathcal{C}_{\mathcal{U}_{g}}\left(  U\right)  $. In particular, the minimal
stopping intervals $U\in\mathcal{U}_{g}$ do not contribute to the sum
$\sum_{U\in\mathcal{U}_{g}}$ defining $\mathsf{B}_{\operatorname{diag}%
\operatorname*{stop}}^{F}\left(  f,g\right)  $.

Next we note that%
\begin{align*}
\mathsf{B}_{\operatorname{diag}\operatorname*{stop}}^{F}\left(  f,g\right)   &
\equiv\sum_{U\in\mathcal{U}_{g}}\sum_{I\in\mathcal{C}_{\mathcal{U}_{g}}\left(
U\right)  }\left(  E_{I_{J}}^{\sigma}\bigtriangleup_{I}^{\sigma}%
\mathsf{P}_{\mathcal{C}_{\mathcal{U}_{g}}\left(  U\right)  }^{\sigma}f\right)
\left\langle H_{\sigma}\left(  \mathbf{1}_{F\setminus I_{J}}\right)
,\sum_{J\in\mathcal{C}_{\mathcal{U}_{g}}^{\operatorname*{no}\operatorname{top}%
}\left(  U\right)  :\ J\subset I\text{ and }\ell\left(  J\right)  \leq
2^{-\tau}\ell\left(  I\right)  }\bigtriangleup_{J}^{\omega}g\right\rangle
_{\omega}\\
& =\sum_{U\in\mathcal{U}_{g}\left[  F\right]  }\mathsf{B}%
_{\operatorname*{stop}}^{F}\left(  \mathsf{P}_{\mathcal{C}_{\mathcal{U}_{g}%
}\left(  U\right)  }^{\sigma}f,\mathsf{P}_{\mathcal{C}_{\mathcal{U}_{g}%
}^{\operatorname*{no}\operatorname{top}}\left(  U\right)  }^{\omega}g\right)
=\sum_{U\in\mathcal{U}_{g}\left[  F\right]  }\mathsf{B}_{\operatorname*{stop}%
}^{F}\left(  f_{U},g_{U}^{\operatorname*{no}\operatorname{top}}\right)  ,\\
\text{where }f_{U}  & \equiv\mathsf{P}_{\mathcal{C}_{\mathcal{U}_{g}}\left(
U\right)  }^{\sigma}f\text{ and }g_{U}^{\operatorname*{no}\operatorname{top}%
}\equiv\mathsf{P}_{\mathcal{C}_{\mathcal{U}_{g}}^{\operatorname*{no}%
\operatorname{top}}\left(  U\right)  }^{\omega}g\ .
\end{align*}
For $F\in\mathcal{F}$, set%
\[
\Theta_{\digamma}^{F}\equiv\sup_{\substack{f,g\neq0 \\\Lambda_{f}^{\sigma
},\Lambda_{g}^{\omega}\in\digamma}}\frac{\left\vert \mathsf{B}%
_{\operatorname*{stop}}^{F}\left(  f,g\right)  \right\vert }{\mathrm{P}%
\mathsf{E}_{\mathcal{F}}^{F}\left(  \Lambda_{g}^{\omega};\sigma,\omega\right)
\left\Vert f\right\Vert _{L^{2}(\sigma)}\left\Vert g\right\Vert _{L^{2}%
(\omega)}},
\]
which is finite because of our assumption that the Haar supports of $f$ and
$g$ are restricted to a fixed finite subset $\digamma$ of the grid
$\mathcal{D}$. We next use (\ref{tight}), (\ref{note}) and orthogonality of
the projections $\mathsf{P}_{\mathcal{C}_{\mathcal{U}}\left(  U\right)  }%
^{\mu}$ to deduce that%
\begin{align*}
\Theta_{\digamma}^{F}  & \leq\sup_{\substack{f,g\neq0 \\\Lambda_{f}^{\sigma
},\Lambda_{g}^{\omega}\in\digamma}}\frac{\left\vert \mathsf{B}%
_{\operatorname{far}\operatorname*{stop}}^{F}\left(  f,g\right)  \right\vert
+\left\vert \mathsf{B}_{\operatorname{diag}\operatorname*{stop}}%
^{F,\operatorname{top}\operatorname{only}}\left(  f,g\right)  \right\vert
+\left\vert \mathsf{B}_{\operatorname{diag}\operatorname*{stop}}%
^{F,\operatorname*{no}\operatorname{top}}\left(  f,g\right)  \right\vert
}{\mathrm{P}\mathsf{E}_{\mathcal{F}}^{F}\left(  \Lambda_{g}^{\omega}%
;\sigma,\omega\right)  \left\Vert f\right\Vert _{L^{2}(\sigma)}\left\Vert
g\right\Vert _{L^{2}(\omega)}}\\
& \leq C+C+\sup_{\substack{f,g\neq0 \\\Lambda_{f}^{\sigma},\Lambda_{g}%
^{\omega}\in\digamma}}\frac{\sum_{U\in\mathcal{U}_{\Lambda_{g}^{\omega}%
}\left[  F\right]  }\left\vert \mathsf{B}_{\operatorname*{stop}}^{F}\left(
f_{U},g_{U}^{\operatorname*{no}\operatorname{top}}\right)  \right\vert
}{\mathrm{P}\mathsf{E}_{\mathcal{F}}^{F}\left(  \Lambda_{g}^{\omega}%
;\sigma,\omega\right)  \left\Vert f\right\Vert _{L^{2}(\sigma)}\left\Vert
g\right\Vert _{L^{2}(\omega)}}\\
& \leq C+C+\sup_{\substack{f,g\neq0 \\\Lambda_{f}^{\sigma},\Lambda_{g}%
^{\omega}\in\digamma}}\sum_{U\in\mathcal{U}_{\Lambda_{g}^{\omega}}\left[
F\right]  }\frac{\mathrm{P}\mathsf{E}_{\mathcal{F}}^{F}\left(  \Lambda
_{g_{U}^{\operatorname*{no}\operatorname{top}}}^{\omega};\sigma,\omega\right)
}{\mathrm{P}\mathsf{E}_{\mathcal{F}}^{F}\left(  \Lambda_{g}^{\omega}%
;\sigma,\omega\right)  }\frac{\left\Vert f_{U}\right\Vert _{L^{2}(\sigma
)}\left\Vert g_{U}^{\operatorname*{no}\operatorname{top}}\right\Vert
_{L^{2}(\omega)}}{\left\Vert f\right\Vert _{L^{2}(\sigma)}\left\Vert
g\right\Vert _{L^{2}(\omega)}}\\
& \ \ \ \ \ \ \ \ \ \ \ \ \ \ \ \ \ \ \ \ \times\frac{\left\vert
\mathsf{B}_{\operatorname*{stop}}^{F}\left(  f_{U},g_{U}^{\operatorname*{no}%
\operatorname{top}}\right)  \right\vert }{\mathrm{P}\mathsf{E}_{\mathcal{F}%
}^{F}\left(  \Lambda_{g_{U}^{\operatorname*{no}\operatorname{top}}}^{\omega
};\sigma,\omega\right)  \left\Vert f_{U}\right\Vert _{L^{2}(\sigma)}\left\Vert
g_{U}^{\operatorname*{no}\operatorname{top}}\right\Vert _{L^{2}(\omega)}}\\
& \leq2C+\sup_{\substack{f,g\neq0 \\\Lambda_{f}^{\sigma},\Lambda_{g}^{\omega
}\in\digamma}}\sup_{U\in\mathcal{U}_{\Lambda_{g}^{\omega}}\left[  F\right]
}\frac{\mathrm{P}\mathsf{E}_{\mathcal{F}}^{F}\left(  \Lambda_{g_{U}%
^{\operatorname*{no}\operatorname{top}}}^{\omega};\sigma,\omega\right)
}{\mathrm{P}\mathsf{E}_{\mathcal{F}}^{F}\left(  \Lambda_{g}^{\omega}%
;\sigma,\omega\right)  }\Theta_{\digamma}^{F}\leq2C+\theta\Theta_{\digamma
}^{F},
\end{align*}
where $\theta>0$ is as in (\ref{tight}). Note that (\ref{tight}) applies here
precisely because the tops of the coronas are missing in $\mathcal{C}%
_{\mathcal{U}_{\Lambda_{g}^{\omega}}}^{\operatorname*{no}\operatorname{top}%
}\left(  U\right)  $. Indeed, for $K\in\mathcal{C}_{\mathcal{F}}%
^{\operatorname{good}}\left(  F\right)  $, it suffices to show,%
\begin{equation}
\sqrt{\sum_{J\in\Lambda_{g_{U}^{\operatorname*{no}\operatorname{top}}}%
^{\omega}\cap\mathcal{D}\left[  K\right]  }\left\Vert \bigtriangleup
_{J}^{\omega}x\right\Vert _{L^{2}\left(  \omega\right)  }^{2}}\leq\theta
\sqrt{\sum_{J\in\Lambda_{g}^{\omega}\cap\mathcal{D}\left[  K\right]
}\left\Vert \bigtriangleup_{J}^{\omega}x\right\Vert _{L^{2}\left(
\omega\right)  }^{2}}.\label{claim theta}%
\end{equation}
If $\Lambda_{g_{U}^{\operatorname*{no}\operatorname{top}}}^{\omega}%
\cap\mathcal{D}\left[  K\right]  $ is empty, there is nothing to prove, and if
$K\supset U$, the first line in (\ref{tight}) yields (\ref{claim theta}). In
the remaining case $K\in\mathcal{C}_{\mathcal{U}_{\Lambda_{g}^{\omega}}%
}^{\operatorname*{no}\operatorname{top}}\left(  U\right)  \setminus
\operatorname{MIN}\left(  \Lambda_{g}^{\omega}\right)  $, the second line in
(\ref{tight}) yields (\ref{claim theta}).

Since $\Theta_{\digamma}^{F}<\infty$, we conclude that $\Theta_{\digamma}%
^{F}\leq\frac{2C}{1-\theta}\leq4C$ provided $0<\theta<\frac{1}{2}$. Hence we
have the inequality,%
\[
\left\vert \mathsf{B}_{\operatorname*{stop}}^{F}\left(  f,g\right)
\right\vert \leq\Theta_{\digamma}^{F}\mathrm{P}\mathsf{E}_{\mathcal{F}}%
^{F}\left(  \Lambda_{g}^{\omega};\sigma,\omega\right)  \left\Vert f\right\Vert
_{L^{2}(\sigma)}\left\Vert g\right\Vert _{L^{2}(\omega)}\leq4C\mathrm{P}%
\mathsf{E}_{\mathcal{F}}\left(  \sigma,\omega\right)  \left\Vert f\right\Vert
_{L^{2}(\sigma)}\left\Vert g\right\Vert _{L^{2}(\omega)},
\]
using (\ref{note}), which when combined with (\ref{far below stop}), completes
the required control (\ref{stop est}) of the stopping form $\mathsf{B}%
_{\operatorname*{stop}}^{F}\left(  f,g\right)  $ when the Haar supports of $f$
and $g$ are in a fixed finite subset $\digamma$ of the grid $\mathcal{D} $.
Since such functions are dense in $L^{2}\left(  \sigma\right)  $ and
$L^{2}\left(  \omega\right)  $ as $\digamma$ ranges over all finite subsets
$\digamma$ of $\mathcal{D}$, the proof of (\ref{stop est}) is complete.

\begin{proof}
[Proof of Theorem \ref{main}]Collecting all of the above form estimates proves
Theorem \ref{main}.
\end{proof}

\section{Appendix}

Here we reproduce the case $p=2$ of an argument in the first version of
\cite{SaWi} on the arXiv, in order to prove that the full testing condition
for $T$ can be controlled by the testing and Muckenhoupt characteristics of
the Hilbert transform $H$, i.e.
\[
\mathfrak{FT}_{T}\left(  \sigma,\omega\right)  \leq C\left[  \mathfrak{T}%
_{H}^{\operatorname{loc}}\left(  \sigma,\omega\right)  +\mathcal{E}_{2}\left(
\sigma,\omega\right)  +\mathcal{A}_{2}^{\operatorname*{hole}}\left(
\sigma,\omega\right)  \right]  \leq C\left[  \mathfrak{T}_{H}%
^{\operatorname{loc}}\left(  \sigma,\omega\right)  +\mathcal{A}_{2}%
^{\operatorname*{hole}}\left(  \sigma,\omega\right)  \right]  .
\]
We use the notation from \cite{SaWi}, except that $\mathcal{A}_{2}%
^{\operatorname*{hole}}\left(  \sigma,\omega\right)  $ denotes the one-tailed
Muckenhoupt characteristic with holes here.

Splitting the integral $\int_{\mathbb{R}}T_{\sigma}\mathbf{1}_{I}\left(
x\right)  ^{2}d\omega\left(  x\right)  $ into a local and global piece, we
obtain%
\[
\left\Vert T\mathbf{1}_{I}\right\Vert _{L^{2}\left(  \omega\right)  }^{2}%
=\int_{I}T_{\sigma}\mathbf{1}_{I}\left(  x\right)  ^{2}d\omega\left(
x\right)  +\int_{\mathbb{R}\setminus I}T_{\sigma}\mathbf{1}_{I}\left(
x\right)  ^{2}d\omega\left(  x\right)  \equiv\mathbf{Local}\left(  I\right)
+\mathbf{Global}\left(  I\right)  .
\]
Here is a brief schematic diagram of the decomposition, with bounds in
$\fbox{}$, used in this subsection:%
\[
\fbox{$%
\begin{array}
[c]{ccccc}%
\mathbf{Local} &  &  &  & \\
\downarrow &  &  &  & \\
\mathbf{Local}^{\operatorname*{plug}} & + & \mathbf{Local}%
^{\operatorname*{hole}} &  & \\
\downarrow &  & \downarrow &  & \\
\downarrow &  & E & + & F\\
\downarrow &  & \fbox{$\mathfrak{T}_{H}^{\operatorname{loc}}\left(
\sigma,\omega\right)  $} &  & \fbox{$\mathfrak{T}_{H}^{\operatorname{loc}%
}\left(  \sigma,\omega\right)  $}\\
\downarrow &  &  &  & \\
A & + & C & + & D\\
\fbox{$\mathcal{E}_{2}\left(  \sigma,\omega\right)  $} &  & \fbox{$\mathcal{E}%
_{2}\left(  \sigma,\omega\right)  $} &  & \fbox{$\mathcal{A}_{2}%
^{\operatorname*{hole}}\left(  \sigma,\omega\right)  $}%
\end{array}
$}\text{,}%
\]
and%
\[
\text{ }\fbox{$%
\begin{array}
[c]{ccccccc}%
\mathbf{Global} &  &  &  &  &  & \\
\downarrow &  &  &  &  &  & \\
A & + & B & + & C & + & D\\
\fbox{$\mathcal{A}_{2}^{\operatorname*{hole}}\left(  \sigma,\omega\right)  $}
&  & \fbox{$\mathcal{A}_{2}^{\operatorname*{hole}}\left(  \sigma
,\omega\right)  $} &  & \fbox{$\mathcal{A}_{2}^{\operatorname*{hole}}\left(
\sigma,\omega\right)  $} &  & \fbox{$\mathcal{A}_{2}^{\operatorname*{hole}%
}\left(  \sigma,\omega\right)  $}%
\end{array}
$},
\]
where all of these bounds are controlled by the global testing condition
$\mathfrak{T}_{2}^{\operatorname*{glob}}\left(  \sigma,\omega\right)  $ as well.

Here, and in the next section as well, we will make critical use of the
following consequence of the fact that for any $W\in\mathcal{D}$, there is at
most one $F\in\mathcal{F}$ with $W\in\mathcal{M}_{\left(  r,\varepsilon
\right)  -\operatorname*{deep}}\left(  F\right)  \cap\mathcal{C}_{\mathcal{F}%
}\left(  F\right)  $,%
\begin{equation}
\int\sum_{F\in\mathcal{F}:\ W\in\mathcal{M}_{\left(  r,\varepsilon\right)
-\operatorname*{deep}}\left(  F\right)  \cap\mathcal{C}_{\mathcal{F}}\left(
F\right)  }\left\vert \mathsf{P}_{\mathcal{C}_{\mathcal{F}}\left(  F\right)
\cap\mathcal{D}\left[  W\right]  }^{\omega}\right\vert Z\left(  x\right)
^{2}d\omega\left(  x\right)  \leq\ell\left(  W\right)  ^{2}\left\vert
I\right\vert _{\omega},\ \ \ \ \ \text{for each }W\in\mathcal{D}%
,\label{overlap}%
\end{equation}
since
\[
\sum_{I\subset K}\int\left\vert \bigtriangleup_{I}^{\omega}Z\right\vert
^{2}d\omega\left(  x\right)  =\left\Vert Z-c_{K}\right\Vert _{L^{2}\left(
\omega\right)  }^{2}\leq\ell\left(  W\right)  ^{2}\left\vert I\right\vert
_{\omega}.
\]

We turn first to estimating the local term $\mathbf{Local}$.

\subsection{Local forward testing}

Orthogonality of the haar projections $\left\{  \mathsf{P}_{\mathcal{C}%
_{\mathcal{F}}\left(  F\right)  \cap\mathcal{D}\left[  W\right]  }^{\omega
}\right\}  _{W}$ shows that%
\begin{align*}
\mathbf{Local}\left(  I\right)    & =\int_{I}\left\vert \int_{I}K\left(
x,y\right)  d\sigma\left(  y\right)  \right\vert ^{2}d\omega\left(  x\right)
\\
& =\int_{I}\left(  \sum_{F\in\mathcal{F}}\sum_{W\in\mathcal{M}_{\left(
r,\varepsilon\right)  -\operatorname*{deep}}\left(  F\right)  \cap
\mathcal{C}_{\mathcal{F}}\left(  F\right)  }\frac{\mathrm{P}\left(
W,\mathbf{1}_{I}\sigma\right)  }{\ell\left(  W\right)  }\mathsf{P}%
_{\mathcal{C}_{\mathcal{F}}\left(  F\right)  \cap\mathcal{D}\left[  W\right]
}^{\omega}Z\left(  x\right)  \right)  ^{2}d\omega\left(  x\right)  \\
& \approx\int_{I}\sum_{F\in\mathcal{F}}\sum_{W\in\mathcal{M}_{\left(
r,\varepsilon\right)  -\operatorname*{deep}}\left(  F\right)  \cap
\mathcal{C}_{\mathcal{F}}\left(  F\right)  }\left(  \frac{\mathrm{P}\left(
W,\mathbf{1}_{I}\sigma\right)  }{\ell\left(  W\right)  }\right)
^{2}\left\vert \mathsf{P}_{\mathcal{C}_{\mathcal{F}}\left(  F\right)
\cap\mathcal{D}\left[  W\right]  }^{\omega}\right\vert Z\left(  x\right)
^{2}d\omega\left(  x\right)  \\
& \lesssim\mathbf{Local}^{\operatorname{plug}}\left(  I\right)
+\mathbf{Local}^{\operatorname{hole}}\left(  I\right)  ,
\end{align*}
where%
\begin{align*}
\mathbf{Local}^{\operatorname{plug}}\left(  I\right)    & \equiv\int_{I}%
\sum_{F\in\mathcal{F}}\sum_{W\in\mathcal{M}_{\left(  r,\varepsilon\right)
-\operatorname*{deep}}\left(  F\right)  \cap\mathcal{C}_{\mathcal{F}}\left(
F\right)  }\left(  \frac{\mathrm{P}\left(  W,\mathbf{1}_{I\cap F}%
\sigma\right)  }{\ell\left(  W\right)  }\right)  ^{2}\left\vert \mathsf{P}%
_{\mathcal{C}_{\mathcal{F}}\left(  F\right)  \cap\mathcal{D}\left[  W\right]
}^{\omega}\right\vert Z\left(  x\right)  ^{2}d\omega\left(  x\right)  ,\\
\mathbf{Local}^{\operatorname{hole}}\left(  I\right)    & \equiv\int_{I}%
\sum_{F\in\mathcal{F}}\sum_{W\in\mathcal{M}_{\left(  r,\varepsilon\right)
-\operatorname*{deep}}\left(  F\right)  \cap\mathcal{C}_{\mathcal{F}}\left(
F\right)  }\left(  \frac{\mathrm{P}\left(  W,\mathbf{1}_{I\setminus F}%
\sigma\right)  }{\ell\left(  W\right)  }\right)  ^{2}\left\vert \mathsf{P}%
_{\mathcal{C}_{\mathcal{F}}\left(  F\right)  \cap\mathcal{D}\left[  W\right]
}^{\omega}\right\vert Z\left(  x\right)  ^{2}d\omega\left(  x\right)  .
\end{align*}
Then we write,%
\begin{align*}
\mathbf{Local}^{\operatorname{plug}}\left(  I\right)    & \approx\int
_{I}\left(  \sum_{F\in\mathcal{F}:\ F\subset I}\sum_{W\in\mathcal{M}_{\left(
r,\varepsilon\right)  -\operatorname*{deep}}\left(  F\right)  \cap
\mathcal{C}_{\mathcal{F}}\left(  F\right)  }\left(  \frac{\mathrm{P}\left(
W,\mathbf{1}_{F\cap I}\sigma\right)  }{\ell\left(  W\right)  }\right)
^{2}\left\vert \mathsf{P}_{\mathcal{C}_{\mathcal{F}}\left(  F\right)
\cap\mathcal{D}\left[  W\right]  }^{\omega}\right\vert Z\left(  x\right)
^{2}\right)  d\omega\left(  x\right)  \\
& +\int_{I}\left(  \sum_{F\in\mathcal{F}:\ F\supsetneqq I}\sum_{W\in
\mathcal{M}_{\left(  r,\varepsilon\right)  -\operatorname*{deep}}\left(
F\right)  \cap\mathcal{C}_{\mathcal{F}}\left(  F\right)  }\left(
\frac{\mathrm{P}\left(  W,\mathbf{1}_{F\cap I}\sigma\right)  }{\ell\left(
W\right)  }\right)  ^{2}\left\vert \mathsf{P}_{\mathcal{C}_{\mathcal{F}%
}\left(  F\right)  \cap\mathcal{D}\left[  W\right]  }^{\omega}\right\vert
Z\left(  x\right)  ^{2}\right)  d\omega\left(  x\right)  \\
& \equiv A+B,
\end{align*}
where term $A$ is easily handled by the energy characteristic,%
\[
A\leq\mathcal{E}_{2}\left(  \sigma,\omega\right)  ^{2}\left\vert I\right\vert
_{\sigma}\ ,
\]
since%
\[
\mathcal{E}_{2}\left(  \sigma,\omega\right)  ^{2}=\sup\frac{1}{\left\vert
I_{0}\right\vert _{\sigma}}\int_{I_{0}}\sum_{F\in\mathcal{F}:\ F\subset I_{0}%
}\sum_{i=1}^{\infty}\left(  \frac{\mathrm{P}\left(  W_{i}^{F},\mathbf{1}%
_{F}\sigma\right)  }{\ell\left(  W_{i}^{F}\right)  }\right)  ^{2}\left\vert
\mathsf{P}_{\mathcal{C}_{\mathcal{F}}\left(  F\right)  \cap\mathcal{D}\left[
W_{i}^{F}\right]  }^{\omega}\right\vert Z\left(  x\right)  ^{2}d\omega\left(
x\right)  .
\]
For term $B$ we write%
\begin{align*}
B  & \lesssim\int_{I}\sum_{F\in\mathcal{F}:\ F\supsetneqq I}\sum
_{W\in\mathcal{M}_{\left(  r,\varepsilon\right)  -\operatorname*{deep}}\left(
F\right)  \cap\mathcal{C}_{\mathcal{F}}\left(  F\right)  \cap\mathcal{D}%
\left[  I\right]  }\left(  \frac{\mathrm{P}\left(  W,\mathbf{1}_{F\cap
I}\sigma\right)  }{\ell\left(  W\right)  }\right)  ^{2}\left\vert
\mathsf{P}_{\mathcal{C}_{\mathcal{F}}\left(  F\right)  \cap\mathcal{D}\left[
W\right]  }^{\omega}\right\vert Z\left(  x\right)  ^{2}d\omega\left(
x\right)  \\
& +\int_{I}\sum_{F\in\mathcal{F}:\ F\supsetneqq I}\sum_{W\in\mathcal{M}%
_{\left(  r,\varepsilon\right)  -\operatorname*{deep}}\left(  F\right)
\cap\mathcal{C}_{\mathcal{F}}\left(  F\right)  :\ I\subsetneqq W}\left(
\frac{\mathrm{P}\left(  W,\mathbf{1}_{F\cap I}\sigma\right)  }{\ell\left(
W\right)  }\right)  ^{2}\left\vert \mathsf{P}_{\mathcal{C}_{\mathcal{F}%
}\left(  F\right)  \cap\mathcal{D}\left[  W\right]  }^{\omega}\right\vert
Z\left(  x\right)  ^{2}d\omega\left(  x\right)  \\
& \equiv C+D.
\end{align*}

For term $C$ there is by (\ref{overlap}) at most one $F\in\mathcal{F}$ for
which both
\[
F\supsetneqq I\text{ and }W\in\mathcal{M}_{\left(  r,\varepsilon\right)
-\operatorname*{deep}}\left(  F\right)  \cap\mathcal{C}_{\mathcal{F}}\left(
F\right)  \cap\mathcal{D}\left[  I\right]  .
\]
If we denote this $F$ by $F_{I}$, then the estimate is again easy using the
energy characteristic,%
\[
C\lesssim\int_{I}\left(  \sum_{W\in\mathcal{M}_{\left(  r,\varepsilon\right)
-\operatorname*{deep}}\left(  F_{I}\right)  \cap\mathcal{C}_{\mathcal{F}%
}\left(  F_{I}\right)  \cap\mathcal{D}\left[  I\right]  }\left(
\frac{\mathrm{P}\left(  W,\mathbf{1}_{I}\sigma\right)  }{\ell\left(  W\right)
}\right)  ^{2}\left\vert \mathsf{P}_{\mathcal{C}_{\mathcal{F}}\left(
F\right)  \cap\mathcal{D}\left[  W\right]  }^{\omega}\right\vert Z\left(
x\right)  ^{2}\right)  d\omega\left(  x\right)  \leq\mathcal{E}_{2}\left(
\sigma,\omega\right)  ^{2}\left\vert I\right\vert _{\sigma}.
\]
For term $D$ we have%
\begin{align*}
D  & =\int_{I}\sum_{F\in\mathcal{F}:\ F\supsetneqq I}\sum_{W\in\mathcal{M}%
_{\left(  r,\varepsilon\right)  -\operatorname*{deep}}\left(  F\right)
\cap\mathcal{C}_{\mathcal{F}}\left(  F\right)  :\ I\subsetneqq W}\left(
\frac{\mathrm{P}\left(  W,\mathbf{1}_{I}\sigma\right)  }{\ell\left(  W\right)
}\right)  ^{2}\left\vert \mathsf{P}_{\mathcal{C}_{\mathcal{F}}\left(
F\right)  \cap\mathcal{D}\left[  W\right]  }^{\omega}\right\vert Z\left(
x\right)  ^{2}d\omega\left(  x\right)  \\
& \lesssim\int_{I}\sum_{F\in\mathcal{F}:\ F\supsetneqq I}\sum_{W\in
\mathcal{M}_{\left(  r,\varepsilon\right)  -\operatorname*{deep}}\left(
F\right)  \cap\mathcal{C}_{\mathcal{F}}\left(  F\right)  :\ I\subsetneqq
W}\left(  \frac{\left\vert I\right\vert _{\sigma}}{\ell\left(  W\right)  ^{2}%
}\right)  ^{2}\ell\left(  W\right)  ^{2}\mathbf{1}_{W}\left(  x\right)
d\omega\left(  x\right)  \\
& =\int_{I}\left(  \sum_{F\in\mathcal{F}:\ F\supsetneqq I}\sum_{W\in
\mathcal{M}_{\left(  r,\varepsilon\right)  -\operatorname*{deep}}\left(
F\right)  \cap\mathcal{C}_{\mathcal{F}}\left(  F\right)  :\ I\subsetneqq
W}\frac{1}{\ell\left(  W\right)  ^{2}}\mathbf{1}_{W}\left(  x\right)  \right)
d\omega\left(  x\right)  \ \left\vert I\right\vert _{\sigma}^{2}\\
& \lesssim\int_{I}\left(  \frac{1}{\left(  \ell\left(  I\right)  +\left\vert
x-c_{I}\right\vert \right)  ^{2}}\right)  d\omega\left(  x\right)
\ \left\vert I\right\vert _{\sigma}^{2}\\
& =\left(  \frac{1}{\left\vert I\right\vert }\int_{I}\left(  \frac{\ell\left(
I\right)  }{\ell\left(  I\right)  +\left\vert x-c_{I}\right\vert }\right)
^{2}d\omega\left(  x\right)  \right)  \ \left(  \frac{\left\vert I\right\vert
_{\sigma}}{\left\vert I\right\vert }\right)  \left\vert I\right\vert _{\sigma
}\leq\mathcal{A}_{2}\left(  \sigma,\omega\right)  ^{2}\left\vert I\right\vert
_{\sigma}\ .
\end{align*}

Now we estimate the local holed term,%
\begin{align*}
\mathbf{Local}^{\operatorname{hole}}\left(  I\right)    & =\int_{I}\sum
_{F\in\mathcal{F}}\sum_{W\in\mathcal{M}_{\left(  r,\varepsilon\right)
-\operatorname*{deep}}\left(  F\right)  \cap\mathcal{C}_{\mathcal{F}}\left(
F\right)  }\left(  \frac{\mathrm{P}\left(  W,\mathbf{1}_{I\setminus F}%
\sigma\right)  }{\ell\left(  W\right)  }\right)  ^{2}\left\vert \mathsf{P}%
_{\mathcal{C}_{\mathcal{F}}\left(  F\right)  \cap\mathcal{D}\left[  W\right]
}^{\omega}\right\vert Z\left(  x\right)  ^{2}d\omega\left(  x\right)  \\
& \lesssim\int_{I}\sum_{F\in\mathcal{F}:\ F\subset I}\sum_{W\in\mathcal{M}%
_{\left(  r,\varepsilon\right)  -\operatorname*{deep}}\left(  F\right)
\cap\mathcal{C}_{\mathcal{F}}\left(  F\right)  }\left\vert \mathsf{P}%
_{\mathcal{C}_{\mathcal{F}}\left(  F\right)  \cap\mathcal{D}\left[  W\right]
}^{\omega}H_{\sigma}\mathbf{1}_{I\setminus F}\left(  x\right)  \right\vert
^{2}d\omega\left(  x\right)  \\
& \lesssim\int_{I}\sum_{F\in\mathcal{F}:\ F\subset I}\sum_{W\in\mathcal{M}%
_{\left(  r,\varepsilon\right)  -\operatorname*{deep}}\left(  F\right)
\cap\mathcal{C}_{\mathcal{F}}\left(  F\right)  }\left\vert \mathsf{P}%
_{\mathcal{C}_{\mathcal{F}}\left(  F\right)  \cap\mathcal{D}\left[  W\right]
}^{\omega}H_{\sigma}\mathbf{1}_{I}\left(  x\right)  \right\vert ^{2}%
d\omega\left(  x\right)  \\
& +\int_{I}\sum_{F\in\mathcal{F}:\ F\subset I}\sum_{W\in\mathcal{M}_{\left(
r,\varepsilon\right)  -\operatorname*{deep}}\left(  F\right)  \cap
\mathcal{C}_{\mathcal{F}}\left(  F\right)  }\left\vert \mathsf{P}%
_{\mathcal{C}_{\mathcal{F}}\left(  F\right)  \cap\mathcal{D}\left[  W\right]
}^{\omega}H_{\sigma}\mathbf{1}_{F}\left(  x\right)  \right\vert ^{2}%
d\omega\left(  x\right)  \\
& \equiv E+F,
\end{align*}
where%
\[
E\leq\int_{I}\left(  \sum_{F\in\mathcal{F}:\ F\subset I}\left\vert
\mathsf{P}_{\mathcal{C}_{\mathcal{F}}\left(  F\right)  }^{\omega}H_{\sigma
}\mathbf{1}_{I}\left(  x\right)  \right\vert ^{2}\right)  d\omega\left(
x\right)  \leq\int_{I}\left\vert H_{\sigma}\mathbf{1}_{I}\left(  x\right)
\right\vert ^{2}d\omega\left(  x\right)  \leq\mathfrak{T}_{H}%
^{\operatorname{loc}}\left(  \sigma,\omega\right)  ^{2}\left\vert I\right\vert
_{\sigma}\ ,
\]
and%
\begin{align*}
F  & \leq\int_{I}\left(  \sum_{F\in\mathcal{F}:\ F\subset I}\left\vert
\mathsf{P}_{\mathcal{C}_{\mathcal{F}}\left(  F\right)  }^{\omega}H_{\sigma
}\mathbf{1}_{F}\left(  x\right)  \right\vert ^{2}\right)  d\omega\left(
x\right)  \leq\int_{I}\left(  \sum_{F\in\mathcal{F}:\ F\subset I}\left\vert
M_{\omega}\mathbf{1}_{F}H_{\sigma}\mathbf{1}_{F}\left(  x\right)  \right\vert
^{2}\right)  d\omega\left(  x\right)  \\
& \lesssim\int_{I}\left(  \sum_{F\in\mathcal{F}:\ F\subset I}\left\vert
\mathbf{1}_{F}H_{\sigma}\mathbf{1}_{F}\left(  x\right)  \right\vert
^{2}\right)  d\omega\left(  x\right)  \leq\mathfrak{T}_{H}^{\operatorname{loc}%
}\left(  \sigma,\omega\right)  ^{2}\int_{I}\left(  \sum_{F\in\mathcal{F}%
:\ F\subset I}\mathbf{1}_{F}\left(  y\right)  \right)  d\sigma\left(
y\right)  \lesssim\mathfrak{T}_{H}^{\operatorname{loc}}\left(  \sigma
,\omega\right)  ^{2}\left\vert I\right\vert _{\sigma}\ .
\end{align*}

\subsection{Global forward testing}

We begin by decomposing the integral on the left of the global term into four
pieces. We have,%
\begin{align*}
& \mathbf{Global}\left(  I\right)  =\int_{\mathbb{R}\setminus I}T_{\sigma
}\mathbf{1}_{I}\left(  x\right)  ^{2}d\omega\left(  x\right)  =\int
_{\mathbb{R}\setminus I}\left(  \sum_{F\in\mathcal{F}}\sum_{W\in
\mathcal{M}_{\left(  r,\varepsilon\right)  -\operatorname*{deep}}\left(
F\right)  \cap\mathcal{C}_{\mathcal{F}}\left(  F\right)  }\frac{\mathrm{P}%
\left(  W,\mathbf{1}_{I}\sigma\right)  }{\ell\left(  W\right)  }%
\mathsf{P}_{\mathcal{C}_{\mathcal{F}}\left(  F\right)  \cap\mathcal{D}\left[
W\right]  }^{\omega}Z\left(  x\right)  \right)  ^{2}d\omega\left(  x\right)
\\
& =\int_{\mathbb{R}\setminus I}\sum_{F\in\mathcal{F}}\sum_{W\in\mathcal{M}%
_{\left(  r,\varepsilon\right)  -\operatorname*{deep}}\left(  F\right)
\cap\mathcal{C}_{\mathcal{F}}\left(  F\right)  }\left(  \frac{\mathrm{P}%
\left(  W,\mathbf{1}_{I}\sigma\right)  }{\ell\left(  W\right)  }\right)
^{2}\mathsf{P}_{\mathcal{C}_{\mathcal{F}}\left(  F\right)  \cap\mathcal{D}%
\left[  W\right]  }^{\omega}Z\left(  x\right)  ^{2}d\omega\left(  x\right)
\\
& =\int_{\mathbb{R}\setminus I}\left\{  \sum_{\substack{W\cap3I=\emptyset
\\\ell\left(  W\right)  \leq\ell\left(  I\right)  }}+\sum_{W\subset3I\setminus
I}+\sum_{\substack{W\cap I=\emptyset\\\ell\left(  W\right)  >\ell\left(
I\right)  }}+\sum_{W\supsetneqq I}\right\}  \sum_{F\in\mathcal{F}%
:\ W\in\mathcal{M}_{\left(  r,\varepsilon\right)  -\operatorname*{deep}%
}\left(  F\right)  \cap\mathcal{C}_{\mathcal{F}}\left(  F\right)  }\left(
\frac{\mathrm{P}\left(  W,\mathbf{1}_{I}\sigma\right)  }{\ell\left(  W\right)
}\right)  ^{2}\left\vert \mathsf{P}_{\mathcal{C}_{\mathcal{F}}\left(
F\right)  \cap\mathcal{D}\left[  W\right]  }^{\omega}\right\vert Z\left(
x\right)  ^{2}d\omega\left(  x\right)  \\
& \lesssim A+B+C+D,
\end{align*}
where the four sums over $W$ in braces are taken over $W\in\mathcal{M}%
_{r-\operatorname*{deep}}\left(  F\right)  $, and where the\ four terms
$A,B,C,D$ equal the integral in the previous line taken over the respective sum.

We claim that
\begin{align*}
A+B  & \lesssim A_{2}^{\operatorname*{offset}}\left(  \sigma,\omega\right)
^{2}\lesssim\mathcal{A}_{2}\left(  \sigma,\omega\right)  ^{2},\\
C+D  & \lesssim\mathcal{A}_{2}\left(  \sigma,\omega\right)  ^{2}.
\end{align*}
First we further decompose term $A$ according to the length of $W$ and its
distance from $I$, and then use (\ref{overlap}) to obtain:%
\begin{align*}
& A=\int_{\mathbb{R}\setminus I}\sum_{m=0}^{\infty}\sum_{k=1}^{\infty}%
\sum_{\substack{W\subset3^{k+1}I\setminus3^{k}I\\\ell\left(  W\right)
=2^{-m}\ell\left(  I\right)  }}\sum_{F\in\mathcal{F}:\ W\in\mathcal{M}%
_{\left(  r,\varepsilon\right)  -\operatorname*{deep}}\left(  F\right)
\cap\mathcal{C}_{\mathcal{F}}\left(  F\right)  }\left(  \frac{\mathrm{P}%
\left(  W,\mathbf{1}_{I}\sigma\right)  }{\ell\left(  W\right)  }\right)
^{2}\left\vert \mathsf{P}_{\mathcal{C}_{\mathcal{F}}\left(  F\right)
\cap\mathcal{D}\left[  W\right]  }^{\omega}\right\vert ^{2}Z\left(  x\right)
d\omega\left(  x\right)  \\
& \lesssim\int_{\mathbb{R}\setminus I}\sum_{m=0}^{\infty}\sum_{k=1}^{\infty
}\sum_{\substack{W\subset3^{k+1}I\setminus3^{k}I\\\ell\left(  W\right)
=2^{-m}\ell\left(  I\right)  }}\mathrm{P}\left(  W,\mathbf{1}_{I}%
\sigma\right)  ^{2}\mathbf{1}_{W}\left(  x\right)  d\omega\left(  x\right)
\\
& \lesssim\sum_{m=0}^{\infty}\sum_{k=1}^{\infty}\sum_{\substack{W\subset
3^{k+1}I\setminus3^{k}I\\\ell\left(  W\right)  =2^{-m}\ell\left(  I\right)
}}\int_{\mathbb{R}\setminus I}\left(  2^{-m}\frac{\ell\left(  I\right)
}{\operatorname*{dist}\left(  W,I\right)  ^{2}}\left\vert I\right\vert
_{\sigma}\mathbf{1}_{W}\left(  x\right)  \right)  ^{2}d\omega\left(  x\right)
\\
& \lesssim\sum_{m=0}^{\infty}\sum_{k=1}^{\infty}\int_{\mathbb{R}\setminus
I}\left(  2^{-m}\frac{\ell\left(  I\right)  }{\left(  3^{k}\ell\left(
I\right)  \right)  ^{2}}\left\vert I\right\vert _{\sigma}\mathbf{1}%
_{3^{k+1}I\setminus3^{k}I}\left(  x\right)  \right)  ^{2}d\omega\left(
x\right)  ,
\end{align*}
which equals,%
\begin{align*}
& \sum_{m=0}^{\infty}\sum_{k=1}^{\infty}\left(  2^{-m}\frac{\ell\left(
I\right)  }{\left(  3^{k}\ell\left(  I\right)  \right)  ^{2}}\left\vert
I\right\vert _{\sigma}\right)  ^{2}\left\vert 3^{k+1}I\setminus3^{k}%
I\right\vert _{\omega}=\sum_{m=0}^{\infty}\sum_{k=1}^{\infty}2^{-m}\frac
{\ell\left(  I\right)  }{\left(  3^{k}\ell\left(  I\right)  \right)  ^{2}%
}\left\vert I\right\vert _{\sigma}\left\vert 3^{k+1}I\setminus3^{k}%
I\right\vert _{\omega}\\
& =\sum_{m=0}^{\infty}\sum_{k=1}^{\infty}2^{-m}3^{-k}\left(  \frac{\left\vert
I\right\vert _{\sigma}\left\vert 3^{k+1}I\setminus3^{k}I\right\vert _{\omega}%
}{3^{k}\ell\left(  I\right)  }\right)  \left\vert I\right\vert _{\sigma}%
\leq3A_{2}^{\operatorname*{offset}}\left(  \sigma,\omega\right)  ^{2}%
\sum_{m=0}^{\infty}\sum_{k=1}^{\infty}2^{-m}3^{-k}\left\vert I\right\vert
_{\sigma}\lesssim A_{2}^{\operatorname*{offset}}\left(  \sigma,\omega\right)
^{2}\left\vert I\right\vert _{\sigma}\ .
\end{align*}

We further decompose term $B$ according to the length of $W$ and use the
Poisson inequality (\ref{e.Jsimeq}) in Lemma \ref{Poisson inequality} on the
(not necessarily dyadic) sibling $I^{\prime}$ of $I$ containing $W$,%
\[
\mathrm{P}\left(  W,\mathbf{1}_{I}\sigma\right)  \lesssim\left(  \frac
{\ell\left(  W\right)  }{\ell\left(  I\right)  }\right)  ^{1-2\varepsilon
}\mathrm{P}\left(  I,\mathbf{1}_{I}\sigma\right)  ,\ \ \ \ \ W\in
\mathcal{M}_{\mathbf{r}-\operatorname*{deep}}\left(  F\right)  ,W\subset
3I\setminus I,
\]
where we have used that $\mathrm{P}\left(  I^{\prime},\mathbf{1}_{I}%
\sigma\right)  \approx\mathrm{P}\left(  I,\mathbf{1}_{I}\sigma\right)  $ and
that the intervals $W\in\mathcal{M}_{\mathbf{r}-\operatorname*{deep}}\left(
F\right)  $ are good. We then obtain from (\ref{overlap}),%
\begin{align*}
& B=\int_{\mathbb{R}\setminus I}\sum_{m=0}^{\infty}\sum_{\substack{W\subset
3I\setminus I\\\ell\left(  W\right)  =2^{-m}\ell\left(  I\right)  }}\sum
_{F\in\mathcal{F}:\ W\in\mathcal{M}_{\left(  r,\varepsilon\right)
-\operatorname*{deep}}\left(  F\right)  \cap\mathcal{C}_{\mathcal{F}}\left(
F\right)  }\left(  \frac{\mathrm{P}\left(  W,\mathbf{1}_{I}\sigma\right)
}{\ell\left(  W\right)  }\right)  ^{2}\left\vert \mathsf{P}_{\mathcal{C}%
_{\mathcal{F}}\left(  F\right)  \cap\mathcal{D}\left[  W\right]  }^{\omega
}\right\vert ^{2}Z\left(  x\right)  d\omega\left(  x\right)  \\
& \lesssim\sum_{m=0}^{\infty}\int_{\mathbb{R}\setminus I}\sum
_{\substack{W\subset3I\setminus I\\\ell\left(  W\right)  =2^{-m}\ell\left(
I\right)  }}\mathrm{P}\left(  W,\mathbf{1}_{I}\sigma\right)  ^{2}%
\mathbf{1}_{W}\left(  x\right)  d\omega\left(  x\right)  \\
& \lesssim\sum_{m=0}^{\infty}\int_{\mathbb{R}\setminus I}\sum
_{\substack{W\subset3I\setminus I\\\ell\left(  W\right)  =2^{-m}\ell\left(
I\right)  }}\left(  2^{-m}\right)  ^{2-4\varepsilon}\mathrm{P}\left(
I,\mathbf{1}_{I}\sigma\right)  ^{2}\mathbf{1}_{W}\left(  x\right)
d\omega\left(  x\right)  \\
& =\sum_{m=0}^{\infty}\left(  2^{-m}\right)  ^{2-4\varepsilon}\mathrm{P}%
\left(  I,\mathbf{1}_{I}\sigma\right)  ^{2}\left\vert 3I\setminus I\right\vert
_{\omega}\approx\sum_{m=0}^{\infty}\left(  2^{-m}\right)  ^{2-4\varepsilon
}\frac{\left\vert I\right\vert _{\sigma}\left\vert 3I\setminus I\right\vert
_{\omega}}{\left\vert I\right\vert }\left\vert I\right\vert _{\sigma}\lesssim
A_{2}^{\operatorname*{offset}}\left(  \sigma,\omega\right)  ^{2}\left\vert
I\right\vert _{\sigma}\ .
\end{align*}

For term $C$ we will have to group the intervals $W$ into blocks $B_{i}$, and
then exploit (\ref{overlap}). We first split the sum according to whether or
not $I$ intersects the triple of $W$:%
\begin{align*}
C  & =\int_{\mathbb{R}\setminus I}\left\{  \sum_{\substack{W:\ I\cap
3W=\emptyset\\\ell\left(  W\right)  >\ell\left(  I\right)  }}+\sum
_{\substack{W:\ I\subset3W\setminus W\\\ell\left(  W\right)  >\ell\left(
I\right)  }}\right\}  \sum_{F\in\mathcal{F}:\ W\in\mathcal{M}_{\left(
r,\varepsilon\right)  -\operatorname*{deep}}\cap\mathcal{C}_{\mathcal{F}%
}\left(  F\right)  }\left(  \frac{\mathrm{P}\left(  W,\mathbf{1}_{I}%
\sigma\right)  }{\ell\left(  W\right)  }\right)  ^{2}\left\vert \mathsf{P}%
_{\mathcal{C}_{F}^{\operatorname*{good}};W}^{\omega}\right\vert Z\left(
x\right)  ^{2}d\omega\left(  x\right)  \\
& \lesssim C_{1}+C_{2}\ .
\end{align*}
For convenience we recall the scalar one-tailed Muckenhoupt condition with
holes,%
\[
\mathcal{A}_{2}\left(  \sigma,\omega\right)  \approx\sup_{Q\text{ an
interval}}\left(  \frac{1}{\left\vert Q\right\vert }\int_{\mathbb{R}\setminus
Q}\left(  \frac{\ell\left(  Q\right)  }{\ell\left(  Q\right)
+\operatorname*{dist}\left(  x,Q\right)  }\right)  ^{2}d\omega\left(
x\right)  \right)  ^{\frac{1}{2}}\ \left(  \frac{\left\vert Q\right\vert
_{\sigma}}{\left\vert Q\right\vert }\right)  ^{\frac{1}{2}}.
\]

We first consider $C_{1}$. Let $\mathcal{M}$ be the maximal dyadic intervals
in $\left\{  Q:3Q\cap I=\emptyset\right\}  $, and then let $\left\{
B_{i}\right\}  _{i=1}^{\infty}$ be an enumeration of those $Q\in\mathcal{M}$
whose side length is at least $\ell\left(  I\right)  $. Now we further
decompose the sum in $C_{1}$ by grouping the intervals $W$ into the Whitney
intervals $B_{i}$, and then using (\ref{overlap}),%
\begin{align*}
C_{1}  & =\int_{\mathbb{R}\setminus I}\sum_{i=1}^{\infty}\sum_{W:\ W\subset
B_{i}}\sum_{F\in\mathcal{F}:\ W\in\mathcal{M}_{\left(  r,\varepsilon\right)
-\operatorname*{deep}}\cap\mathcal{C}_{\mathcal{F}}\left(  F\right)  }\left(
\frac{\mathrm{P}\left(  W,\mathbf{1}_{I}\sigma\right)  }{\ell\left(  W\right)
}\right)  ^{2}\left\vert \mathsf{P}_{\mathcal{C}_{\mathcal{F}}\left(
F\right)  \cap\mathcal{D}\left[  W\right]  }^{\omega}\right\vert Z\left(
x\right)  ^{2}d\omega\left(  x\right)  \\
& \lesssim\int_{\mathbb{R}\setminus I}\sum_{i=1}^{\infty}\left(
\frac{\left\vert I\right\vert _{\sigma}}{\left(  \ell\left(  B_{i}\right)
+\operatorname*{dist}\left(  B_{i},I\right)  \right)  ^{2}}\right)  ^{2}%
\sum_{W:\ W\subset B_{i}}\sum_{F\in\mathcal{F}:\ W\in\mathcal{M}_{\left(
r,\varepsilon\right)  -\operatorname*{deep}}\cap\mathcal{C}_{\mathcal{F}%
}\left(  F\right)  }\left\vert \mathsf{P}_{\mathcal{C}_{\mathcal{F}}\left(
F\right)  \cap\mathcal{D}\left[  W\right]  }^{\omega}\right\vert Z\left(
x\right)  ^{2}d\omega\left(  x\right)  \\
& \lesssim\int_{\mathbb{R}\setminus I}\left(  \sum_{i=1}^{\infty}%
\frac{\left\vert I\right\vert _{\sigma}}{\left(  \ell\left(  B_{i}\right)
+\operatorname*{dist}\left(  B_{i},I\right)  \right)  ^{2}}\sum_{W:\ W\subset
B_{i}}\ell\left(  W\right)  \mathbf{1}_{W}\left(  x\right)  \right)
^{2}d\omega\left(  x\right)  ,
\end{align*}
which is at most%
\begin{align*}
& \int_{\mathbb{R}\setminus I}\left(  \sum_{i=1}^{\infty}\frac{\left\vert
I\right\vert _{\sigma}}{\left(  \ell\left(  B_{i}\right)
+\operatorname*{dist}\left(  B_{i},I\right)  \right)  ^{2}}\ell\left(
B_{i}\right)  \mathbf{1}_{B_{i}}\left(  x\right)  \right)  ^{2}d\omega\left(
x\right)  \\
& =\left\vert I\right\vert _{\sigma}^{p}\int_{\mathbb{R}\setminus I}\left(
\sum_{i=1}^{\infty}\frac{\ell\left(  B_{i}\right)  }{\left(  \ell\left(
B_{i}\right)  +\operatorname*{dist}\left(  B_{i},I\right)  \right)  ^{2}%
}\mathbf{1}_{B_{i}}\left(  x\right)  \right)  ^{2}d\omega\left(  x\right)  \ .
\end{align*}

Since the intervals $B_{i}$ are pairwise disjoint, the last line above is at
most%
\[
\lesssim\left\vert I\right\vert _{\sigma}\sum_{i=1}^{\infty}\left(  \frac
{\ell\left(  B_{i}\right)  }{\left(  \ell\left(  B_{i}\right)
+\operatorname*{dist}\left(  B_{i},I\right)  \right)  ^{2}}\right)
^{2}\left\vert B_{i}\right\vert _{\omega}\ \left\vert I\right\vert _{\sigma
}\lesssim\left\{  \sum_{i=1}^{\infty}\frac{\left\vert B_{i}\right\vert
_{\omega}\left\vert I\right\vert _{\sigma}}{\ell\left(  B_{i}\right)  ^{p}%
}\right\}  \left\vert I\right\vert _{\sigma}\ ,
\]
and using,
\begin{align*}
\sum_{i=1}^{\infty}\frac{\left\vert B_{i}\right\vert _{\omega}\left\vert
I\right\vert _{\sigma}}{\left\vert B_{i}\right\vert ^{p}}  & =\frac{\left\vert
I\right\vert _{\sigma}}{\left\vert I\right\vert }\sum_{i=1}^{\infty}%
\frac{\left\vert I\right\vert }{\left\vert B_{i}\right\vert ^{2}}\left\vert
B_{i}\right\vert _{\omega}\approx\frac{\left\vert I\right\vert _{\sigma}%
}{\left\vert I\right\vert }\frac{1}{\left\vert I\right\vert }\sum
_{i=1}^{\infty}\int_{B_{i}}\left(  \frac{\ell\left(  I\right)  }%
{\operatorname*{dist}\left(  x,I\right)  }\right)  ^{2}d\omega\left(
x\right)  \\
& \lesssim\frac{\left\vert I\right\vert _{\sigma}}{\left\vert I\right\vert
}\frac{1}{\left\vert I\right\vert }\int_{\mathbb{R}\setminus I}\left(
\frac{\ell\left(  I\right)  }{\operatorname*{dist}\left(  x,I\right)
}\right)  ^{2}d\omega\left(  x\right)  \leq\mathcal{A}_{2}%
^{\operatorname*{hole}}\left(  \sigma,\omega\right)  ^{2},
\end{align*}
we obtain $C_{1}\lesssim\mathcal{A}_{2}\left(  \sigma,\omega\right)
^{2}\left\vert I\right\vert _{\sigma}$.

Next we turn to estimating term $C_{2}$ where the triple of $W$ contains $I$
but $W$ itself does not. Note that there are at most two such intervals $W$ of
a given side length, one to each side of $I$, and that these intervals are
pairwise disjoint. So with this in mind, and using (\ref{overlap}) again, we
sum over the intervals $W$ according to their lengths to obtain,%

\begin{align*}
C_{2}  & =\int_{\mathbb{R}\setminus I}\sum_{m=0}^{\infty}\sum
_{\substack{W:\ I\subset3W\setminus W\\\ell\left(  W\right)  =2^{m}\ell\left(
I\right)  }}\left(  \frac{\mathrm{P}\left(  W,\mathbf{1}_{I}\sigma\right)
}{\ell\left(  W\right)  }\right)  ^{2}\sum_{F\in\mathcal{F}:\ W\in
\mathcal{M}_{\left(  r,\varepsilon\right)  -\operatorname*{deep}}\left(
F\right)  \cap\mathcal{C}_{\mathcal{F}}\left(  F\right)  }\left\vert
\mathsf{P}_{\mathcal{C}_{\mathcal{F}}\left(  F\right)  \cap\mathcal{D}\left[
W\right]  }^{\omega}\right\vert Z\left(  x\right)  ^{2}d\omega\left(
x\right)  \\
& =\int_{\mathbb{R}\setminus I}\sum_{m=0}^{\infty}\sum_{\substack{W:\ I\subset
3W\setminus W\\\ell\left(  W\right)  =2^{m}\ell\left(  I\right)  }}\left(
\frac{\left\vert I\right\vert _{\sigma}}{\left(  \ell\left(  W\right)
+\operatorname*{dist}\left(  W,I\right)  \right)  ^{2}}\right)  ^{2}\sum
_{F\in\mathcal{F}:\ W\in\mathcal{M}_{\left(  r,\varepsilon\right)
-\operatorname*{deep}}\left(  F\right)  \cap\mathcal{C}_{\mathcal{F}}\left(
F\right)  }\left\vert \mathsf{P}_{\mathcal{C}_{\mathcal{F}}\left(  F\right)
\cap\mathcal{D}\left[  W\right]  }^{\omega}\right\vert \frac{Z\left(
x\right)  }{\ell\left(  W\right)  }^{2}d\omega\left(  x\right)  \\
& \lesssim\int_{\mathbb{R}\setminus I}\sum_{m=0}^{\infty}\sum
_{\substack{W:\ I\subset3W\setminus W\\\ell\left(  W\right)  =2^{m}\ell\left(
I\right)  }}\left(  \frac{\left\vert I\right\vert _{\sigma}}{\left(
\ell\left(  W\right)  +\operatorname*{dist}\left(  W,I\right)  \right)  ^{2}%
}\right)  ^{2}\mathbf{1}_{W}\left(  x\right)  d\omega\left(  x\right)  \\
& \lesssim\sum_{m=0}^{\infty}\sum_{\substack{W:\ I\subset3W\setminus
W\\\ell\left(  W\right)  =2^{m}\ell\left(  I\right)  }}\left(  \frac
{\left\vert I\right\vert _{\sigma}}{\left(  \ell\left(  W\right)
+\operatorname*{dist}\left(  W,I\right)  \right)  ^{2}}\right)  ^{2}\left\vert
W\right\vert _{\omega}\ ,
\end{align*}
which is at most,%
\begin{align*}
& \sum_{m=0}^{\infty}\sum_{\substack{W:\ I\subset3W\setminus W\\\ell\left(
W\right)  =2^{m}\ell\left(  I\right)  }}\left(  \frac{\left\vert I\right\vert
_{\sigma}}{\left(  \ell\left(  W\right)  +\operatorname*{dist}\left(
W,I\right)  \right)  ^{2}}\right)  ^{2}\left\vert W\right\vert _{\omega
}\lesssim\sum_{m=0}^{\infty}\left(  \frac{\left\vert I\right\vert _{\sigma}%
}{\left\vert 2^{m}I\right\vert ^{2}}\right)  ^{2}\mathbf{\ }\left\vert
3\cdot2^{m}I\right\vert _{\omega}\\
& =\left\{  \frac{\left\vert I\right\vert _{\sigma}}{\left\vert I\right\vert
}\sum_{m=0}^{\infty}\frac{\left\vert I\right\vert \left\vert 3\cdot
2^{m}I\right\vert _{\omega}}{\left\vert 2^{m}I\right\vert ^{4}}\right\}
\left\vert I\right\vert _{\sigma}\lesssim\mathcal{A}_{2}\left(  \sigma
,\omega\right)  ^{2}\left\vert I\right\vert _{\sigma}\ ,
\end{align*}
since in analogy with the corresponding estimate above,%
\begin{align*}
\sum_{m=0}^{\infty}\frac{\left\vert I\right\vert \left\vert 3\cdot
2^{m}I\right\vert _{\omega}}{\left\vert 2^{m}I\right\vert ^{4}}  & =\int
\sum_{m=0}^{\infty}\frac{\left\vert I\right\vert }{\left\vert 2^{m}%
I\right\vert ^{4}}\mathbf{1}_{3\cdot2^{m}I}\left(  x\right)  \ d\omega\left(
x\right)  \lesssim\frac{1}{\left\vert I\right\vert }\int\sum_{m=0}^{\infty
}\left(  \frac{\left\vert I\right\vert }{\left\vert 2^{m}I\right\vert ^{2}%
}\right)  ^{2}\mathbf{1}_{3\cdot2^{m}I}\left(  x\right)  \ d\omega\left(
x\right)  \\
& \lesssim\frac{1}{\left\vert I\right\vert }\int_{\mathbb{R}\setminus
I}\left(  \frac{\ell\left(  I\right)  }{\ell\left(  I\right)  +\left\vert
\operatorname*{dist}\left(  x,I\right)  \right\vert }\right)  ^{2}%
\ d\omega\left(  x\right)  .
\end{align*}
Altogether then we have%
\[
C\lesssim C_{1}+C_{2}\lesssim\mathcal{A}_{2}^{\operatorname*{hole}}\left(
\sigma,\omega\right)  ^{2}\left\vert I\right\vert _{\sigma}\ .
\]

Finally, we turn to term $D$, which is handled in the same way as term $C_{2}%
$. The intervals $W$ occurring here are included in the set of ancestors
$A_{k}\equiv\pi_{\mathcal{D}}^{\left(  k\right)  }I$ of $I$, $1\leq k<\infty$.
We thus have from (\ref{overlap}) once more,%
\begin{align*}
D  & =\int_{\mathbb{R}\setminus I}\left(  \sum_{W\supsetneqq I}\sum
_{F\in\mathcal{F}:\ W\in\mathcal{M}_{\left(  r,\varepsilon\right)
-\operatorname*{deep}}\left(  F\right)  \cap\mathcal{C}_{\mathcal{F}}\left(
F\right)  }\frac{\mathrm{P}\left(  W,\mathbf{1}_{I}\sigma\right)  }%
{\ell\left(  W\right)  }\mathsf{P}_{\mathcal{C}_{\mathcal{F}}\left(  F\right)
\cap\mathcal{D}\left[  W\right]  }^{\omega}Z\left(  x\right)  \right)
^{2}d\omega\left(  x\right)  \\
& \leq\int_{\mathbb{R}\setminus I}\sum_{k=1}^{\infty}\left(  \frac
{\mathrm{P}\left(  A_{k},\mathbf{1}_{I}\sigma\right)  }{\ell\left(
A_{k}\right)  }\right)  ^{2}\sum_{F\in\mathcal{F}:\ A_{k}\in\mathcal{M}%
_{\left(  r,\varepsilon\right)  -\operatorname*{deep}}\left(  F\right)
\cap\mathcal{C}_{\mathcal{F}}\left(  F\right)  }\left\vert \mathsf{P}%
_{\mathcal{C}_{\mathcal{F}}\left(  F\right)  \cap\mathcal{D}\left[
A_{k}\right]  }^{\omega}\right\vert Z\left(  x\right)  ^{2}d\omega\left(
x\right)  \\
& \lesssim\int_{\mathbb{R}\setminus I}\left(  \sum_{k=1}^{\infty}%
\mathrm{P}\left(  A_{k},\mathbf{1}_{I}\sigma\right)  \mathbf{1}_{A_{k}}\left(
x\right)  \right)  ^{2}d\omega\left(  x\right)  \lesssim\int_{\mathbb{R}%
\setminus I}\left(  \sum_{k=1}^{\infty}\frac{\left\vert I\right\vert _{\sigma
}}{\ell\left(  A_{k}\right)  ^{2}}\mathbf{1}_{A_{k}}\left(  x\right)  \right)
^{2}d\omega\left(  x\right)
\end{align*}
which is at most%
\begin{align*}
& \frac{\left\vert I\right\vert _{\sigma}}{\left\vert I\right\vert }\frac
{1}{\left\vert I\right\vert }\int_{\mathbb{R}\setminus I}\left(  \sum
_{k=1}^{\infty}\frac{\ell\left(  I\right)  }{\operatorname*{dist}\left(
x,I\right)  ^{2}}\mathbf{1}_{A_{k}}\left(  x\right)  \right)  ^{2}%
d\omega\left(  x\right)  \ \left\vert I\right\vert _{\sigma}\\
& \lesssim\frac{\left\vert I\right\vert _{\sigma}}{\left\vert I\right\vert
}\frac{1}{\left\vert I\right\vert }\int_{\mathbb{R}\setminus I}\left(
\frac{\ell\left(  I\right)  }{\operatorname*{dist}\left(  x,I\right)  ^{2}%
}\right)  ^{2}d\omega\left(  x\right)  \ \left\vert I\right\vert _{\sigma
}\lesssim\mathcal{A}_{2}^{\operatorname*{hole}}\left(  \sigma,\omega\right)
^{2}\left\vert I\right\vert _{\sigma}\ .
\end{align*}

\end{document}